\newcounter{subsection1}[section]
\newtheorem{defn}[subsection1]{Definition}
\newtheorem{lemma}[subsection1]{Lemma}
\newtheorem{prop}[subsection1]{Proposition}
\newtheorem{theorem}[subsection1]{Theorem}
\newtheorem{remark}[subsection1]{Remark}
\newtheorem{cor}[subsection1]{Corollary}
\newtheorem{notation}[subsection1]{Notation}
\newcounter{assumptions}
\newenvironment{proof}{\vspace{1ex}\noindent{\textsc{Proof:}}\hspace{0.5em}}{\hfill\qed\vspace{1ex}}
\numberwithin{equation}{section} 
\numberwithin{subsection1}{section}
\DeclareMathOperator{\Vote}{\mathbb V}
\begin{document}

\def\comment{\bfseries\textsc}

\def\ra{\Rightarrow} 
\def\to{\rightarrow} 
\def\iff{\Leftrightarrow}
\def\sw{\subseteq} 
\def\mc{\mathcal} 
\def\mb{\mathbb} 
\def\sc{\setminus} 
\def\p{\partial} 
\def\v{\boldsymbol} 
\def\E{\mb{E}} 
\def\P{\mb{P}}
\def\R{\mb{R}} 
\def\C{\mb{C}} 
\def\N{\mb{N}}
\def\Q{\mb{Q}}
\def\Z{\mb{Z}}
\def\IZ{\mb{Z}}
\def\B{\mb{B}}
\def\~{\sim}
\def\-{\,;\,} 
\def\com{\leftrightarrow}
\def\|{\,|\,} 
\def\li{\langle}
\def\ri{\rangle}
\def\wt{\widetilde}
\def\qed{$\blacksquare$}
\def\1{\mathbbm{1}}
\def\cadlag{c\`{a}dl\`{a}g}
\def\slfv{S$\Lambda$FV}
\def\slfvs{SLFVS}
\def\l{\left}
\def\r{\right}

\def\Ca{($\mathscr{C}1$)}
\def\Cb{($\mathscr{C}2$)}
\def\Cc{($\mathscr{C}3$)}
\def\Cd{($\mathscr{C}4$)}
\def\Ce{($\mathscr{C}5$)}

\def\BBMone{\textit{BBM}^1}
\def\BBMtwo{\textit{BBM}^2}
\def\BBM{\textit{BBM}}

\def\comment{\color{blue} }
\def\todo{\comment todo}
\def\fixme{\comment fixme}

\def\epsilon{\varepsilon} 

\def\dim{\mathbbm{d}}

\newcommand{\ts}{\textsuperscript}

\newcommand \ind {\mathbf{1}}

\allowdisplaybreaks

\author[1]{Alison Etheridge\thanks{etheridg@stats.ox.ac.uk, supported in part by EPSRC Grant EP/I01361X/1}}
\author[2]{Nic Freeman\thanks{nicfreeman1209@gmail.com}}
\author[1]{Sarah Penington\thanks{sarah.penington@sjc.ox.ac.uk, supported by EPSRC DTG EP/K503113/1}}
\affil[1]{Department of Statistics, University of Oxford}
\affil[2]{School of Mathematics and Statistics, University of Sheffield}
\title{Branching Brownian Motion, mean curvature flow and the motion of
hybrid zones}

\date{\today}
\maketitle

\begin{abstract}
We provide a probabilistic proof of 
a well known connection between a special case of 
the Allen-Cahn equation and mean curvature flow.
We then prove a
corresponding result for scaling limits of the spatial $\Lambda$-Fleming-Viot process 
with selection, in which the selection mechanism is chosen to model what are known
in population genetics as {\em hybrid zones}. 
Our proofs will exploit a duality with a system of branching (and coalescing) 
random walkers which is of some interest in its own right. 
\end{abstract}

\tableofcontents

\section{Introduction}

Our central result, Theorem~\ref{thm:slfvs} in Section~\ref{sec:slfvs_intro},
is the convergence, after suitable rescaling, of a 
stochastic analogue of the Allen-Cahn equation to the indicator function of
a region whose boundary evolves according to mean curvature flow.
The main motivation for this work comes from mathematical population genetics;
specifically,
we are interested in the behaviour of so-called hybrid zones. These occur when genetically distinct groups 
of individuals meet and mate, leaving behind at least some offspring of mixed ancestry. A textbook 
example is the common house mouse in Denmark \citep{hunt/selander:1973} which exists in the
form {\em Mus musculus} in the North and {\em M.~domesticus} in the South, but hybrid zones are
ubiquitous in nature, for example, \cite{barton/hewitt:1989} cite 170 examples. 
Two principal explanations have been offered for the genetic variation observed in such zones.
The first is that they arise in response to spatially varying natural selection; the second is 
that they are formed through secondary contact of two populations that were previously genetically
isolated. Whereas in the first scenario the location of the hybrid zone
is determined by an environment, which is usually taken to be fixed, in 
the second scenario, the hybrid zone can evolve with time.
It is this second scenario that interests us here. 

It is usual to suppose that the underlying genetics is controlled by a single gene which occurs
in two types (alleles), traditionally denoted $a$ and $A$. Individuals carry two copies of the gene and
while those of types $aa$ and $AA$ (the {\em homozygotes}) are equally fit, the {\em heterozygotes} (that is individuals of 
type $aA$) are less likely to successfully reproduce. 
In an infinitely dense
population, provided the selection against heterozygotes is weak, when 
viewed over large spatial and temporal scales, the proportion of
$a$-alleles in the population at location $x$ at time $t$ is modelled by the solution
to
\begin{equation}
\label{AC1}
\frac{\partial v}{\partial t}=\Delta v+\v{s} v(1-v)(2v-1),
\end{equation}
for an appropriate initial condition, where $\v{s}>0$ is a scaled selection coefficient. This is
a special case of the Allen-Cahn equation; we explain the
origin of this particular form of nonlinearity in 
Section~\ref{slfvs for hybrid zones}.

Our interest is in the behaviour of the region in which both alleles are present in substantial numbers.
Because heterozygotes are less fit than homozygotes, we expect this to be a narrow band which, when 
viewed on large enough scales, will look like a sharp interface. More formally, we apply a diffusive
scaling to~(\ref{AC1}) in which $t\mapsto \epsilon^2t$ and $x\mapsto \epsilon x$. 
The Laplacian term is, of course, invariant, but the term corresponding to 
selection is multiplied by a factor $1/\epsilon^2$.
It is well known that for suitable initial conditions, in a sense that we make precise in 
Theorem~\ref{theorem ac to cf}, as $\epsilon\downarrow 0$, the solution to the
scaled equation converges to the indicator function of a set whose boundary evolves according to 
mean curvature flow.
Thus, in the biologically relevant case of two dimensions, if we observe
the population over sufficiently large spatial and temporal scales, the interface between the
two populations will evolve approximately as curvature flow or {\em curve-shortening flow} as it is
often known.

One reason for the importance of curvature flow in applications stems from an 
underlying variational principle: curve shortening flow decreases the length of the 
curve at the fastest rate possible relative to the total speed of motion (measured in 
the sense of the square integral of the speed of motion of points around
the curve), see e.g.~\cite{white:2002} for a simple explanation. 
In this sense, if our populations evolved deterministically, 
then they would
minimise the boundary between them as quickly as possible. 
In reality this will be somewhat offset
by the randomness due to reproduction, known as random genetic drift,
in a population which is not infinitely dense. 
Indeed if genetic drift is too strong, then we can expect the random noise
to obscure the nonlinear term: this is suggested by the results of 
\cite{hairer/ryser/weber:2012}, who consider the equation
$$dw=(\Delta w +w-w^3)dt +\sigma dW,$$
in two dimensions, where $W$ is a mollified space-time white noise. 
(By considering $(1+w)/2$, up to constants, we recover a stochastic 
version of~(\ref{AC1}).) If the mollifier
is removed, then the solutions converge weakly to zero, whereas if the intensity
of the noise simultaneously converges to zero sufficiently quickly, then
they recover the deterministic equation. 
The basic question that we set out to answer is ``Will hybrid zones still evolve approximately
according to curvature flow in the presence of random genetic drift?''

Of course, genetic drift is not appropriately modelled by a mollified
space-time white noise and so,
in order to investigate this question, we must first
define a model that combines selection against heterozygosity
with random genetic drift. Our starting point will be the spatial 
$\Lambda$-Fleming-Viot 
process which was introduced in \cite{etheridge:2008, barton/etheridge/veber:2010} and has been
studied in a series of papers since; 
see e.g.~\cite{barton/etheridge/veber:2013} for a review. The
advantage of this model is that it allows us to incorporate genetic 
drift into models of populations 
evolving in spatial continua, with no restriction on spatial dimension. 
However, since our proofs are based on a duality with a branching and
coalescing random walk, we expect analogous results if we start, for
example, from the classical stepping stone model in which the population
is subdivided into `islands' that sit at the vertices of $\IZ^\dim$.
In what follows, 
we shall refer to the spatial $\Lambda$-Fleming-Viot process with 
selection against heterozygosity as the
{\slfvs}. It is described carefully in Definition~\ref{slfvdefn}.
A version of this model with 
selection in favour of one genetic type was constructed in \cite{etheridge/veber/yu:2014}. There it
was shown that when suitably rescaled, in two or more dimensions, 
the allele frequencies converge to a solution of the Fisher-KPP equation,
\begin{equation}
\label{fisher KPP}
\frac{\partial v}{\partial t}=\Delta v+\v{s} v(1-v).
\end{equation}
Mimicking that result, one
can obtain~(\ref{AC1}) as a scaling limit of the {\slfvs}. Combined 
with the known convergence
of the scaled version of~(\ref{AC1}),
this certainly suggests that there should be scalings of the {\slfvs} which lead to mean curvature flow.
However, available proofs of Theorem~\ref{theorem ac to cf} could not 
readily be adapted to our 
stochastic setting and so we were forced to seek an alternative approach. 
Our first result is therefore a new proof of Theorem~\ref{theorem ac to cf}.
We then adapt this to prove convergence of the proportions of 
different genetic types under the {\slfvs} to 
the indicator function of a set whose boundary evolves according
to mean curvature flow.
The key to our proof
is a probabilistic representation of solutions to~(\ref{AC1}) which we believe to be of interest in
its own right. 

Before defining the {\slfvs}, we recall some purely deterministic results. Although our primary interest
is in two spatial dimensions, there will be no additional arguments required if we work in $\R^{\dim}$ for
arbitrary $\dim >1$.

\subsection{The Allen-Cahn equation and mean curvature flow}

The Allen-Cahn equation \citep{allen/cahn:1979} takes the form
\begin{equation}
\label{allen-cahn equation}
\frac{\partial v^\epsilon}{\partial t}=\Delta v^\epsilon -\frac{1}{\epsilon^2}f(v^\epsilon),
\end{equation}
where $f$ is the derivative of a potential function $F$ which has 
exactly two local minima, at $v_-$ and
$v_+$, say.  More precisely, we insist that $f\in C^2(\R)$ has exactly three zeros, $v_-<v_0<v_+$, and
\begin{equation}
\label{conditions on potential}
\begin{array}{ll}
f(v)<0, & \forall v\in (-\infty, v_{-})\cup (v_0, v_+);\\
f(v)>0, & \forall v\in (v_-,v_0)\cup (v_+,\infty);\\
f'(v_-)>0, & f'(v_+)>0, \quad f'(v_0)<0.
\end{array}
\end{equation}
Although originally introduced as a model for the macroscopic motion of phase boundaries
driven by surface tension, the Allen-Cahn equation has found application in many other areas. 
It represents a balance between two opposing tendencies: the diffusive effect of the 
Laplacian attempts to smooth the solution, while the potential term drives it towards
the states $v_-$ and $v_+$. As a result, a narrow interface between these two states develops.

Allen and Cahn observed that if the two potential wells do not have 
equal depth, then on
the timescale $s=t/\epsilon$, the interface will propagate at a constant 
speed (proportional to $F(v_-)-F(v_+)$) along its normal, towards the domain
of the deeper well. On the other hand, if the potential wells have equal depth, then the 
interface is almost stationary on this timescale, but if we observe it
over the longer  
timescales of~(\ref{allen-cahn equation}), it will propagate with normal 
velocity equal to the 
mean curvature of the interface. 

There is now a huge literature that makes the observation of Allen and Cahn
rigorous under various regularity conditions, for example
\cite{bronsard/kohn:1991, evans/soner/souganidis:1992, 
ilmanen:1993, sato:2008}.
The principal obstruction to be overcome relates to the fact that the mean curvature flow 
is only well-defined under some regularity conditions and, even then, only up to a finite time
horizon when it either shrinks to a point or, in dimensions three and higher, develops other
singularities.

Before stating a result, let us make the definition of mean curvature flow precise.
We begin with the special case of two dimensions. This is the relevant dimension for 
our biological application and requires much less explanation. In that setting,
mean curvature is just curvature and the corresponding flow is often called
curve-shortening.

Recall that a function is said to be a smooth embedding 
if it is a diffeomorphism onto its image (which we shall implicitly assume 
is a subset of $\R^2$).
\begin{defn}[Curve-shortening flow]
\label{def:curvatureflow}
Let $S^1$ denote the unit circle in $\R^2$. Let $\v{\Gamma}=(\v{\Gamma}_t(\cdot))_t$ be a family of smooth embeddings, indexed by $t\in[0,\mathscr{T})$, where for each $t$, $\v{\Gamma}_t:S^1\to\R^2$. 
Let $\v{n}=\v{n}_t(\phi)$ denote the unit (inward) normal vector to $\v{\Gamma}_t$ at $\phi$ and let $\kappa=\kappa_t(\phi)$ denote the curvature of $\v{\Gamma}_t$ at $\phi$.
We say that $\v{\Gamma}$ is a {\em curvature flow} or {\em curve-shortening flow} if
\begin{equation}\label{eq:cf_pre}
\frac{\p \v{\Gamma}_t(\phi)}{\p t}=\kappa _t(\phi)\v{n}_t(\phi).
\end{equation}
for all $t,\phi$.
\end{defn}
Assuming that $\v{\Gamma}_0$ is a smooth embedding of $S^1$ into $\R^2$,
the behaviour of $\v{\Gamma}_t$ under curve-shortening is 
completely understood. First, it has a finite lifetime which we
shall denote by $\mathscr{T}$. In
\cite{gage/hamilton:1986}, it was shown that if $\v{\Gamma}_0$ is convex, then so is
$\v{\Gamma}_t$ for all $t<\mathscr{T}$. Moreover, $\mathscr{T}$ can be chosen so that $\v{\Gamma}_t$ shrinks towards a point as $t\uparrow\mathscr{T}$; in this limit the asymptotic `shape' of $\v{\Gamma}_t$ is a circle. Soon afterwards, \cite{grayson:1987} showed that,
in fact, under curve-shortening, any smoothly embedded closed curve becomes convex at a
time $\tau<\mathscr{T}$, after which the results of \citeauthor{gage/hamilton:1986} apply.

In higher dimensions we must replace the curvature by the {\em mean curvature}.
Recall that to define this quantity for a $(\dim -1)$-dimensional
hypersurface in $\R^{\dim}$, we take an orthonormal basis of the 
tangent space and form the matrix of the second fundamental form, that is 
the matrix whose $(i,j)$th entry is 
the dot product of the unit normal to the hypersurface with 
the derivative of the $i$th 
vector in the basis in the direction of the $j$th.
The 
$\dim-1$ principal curvatures, $\kappa_1,\ldots ,\kappa_{\dim-1}$, 
are the eigenvalues of the matrix and their sum, 
that is the trace of the matrix,
is the (scalar) mean curvature.
The product of the scalar mean curvature with the unit normal is
called the mean curvature vector (which does not 
depend on the choice of normal, since reversing
the direction of the normal also changes the sign of the scalar mean curvature).

\begin{defn}[Mean curvature flow]
{\em Mean curvature flow}, when it is defined,
is obtained by replacing the curvature $\kappa_t$ in
equation~(\ref{eq:cf_pre}) by the mean curvature.
\end{defn}
The behaviour of mean curvature flow in $\dim\geq 3$ is more complex than that of curve-shortening.
It was proved by~\cite{huisken:1984} that the analogue of the Gage-Hamilton Theorem holds,
that is a $(\dim-1)$--dimensional compact convex surface must shrink to a point and its asymptotic
shape is a sphere. However, the analogue of Grayson's Theorem is false. In higher dimensions
singularities can develop before the enclosed volume vanishes. Since our main interest is in
two dimensions, we shall not discuss this here. Instead we shall follow~\cite{chen:1992} in
imposing sufficiently strong initial conditions that the solution exists for a positive time and
stopping before we encounter any singularities, and we refer to~\cite{mantegazza:2011}
for a detailed discussion.

Suppose that $\dim\geq 2$.
Our first result concerns the convergence as $\epsilon\downarrow 0$, 
for suitable initial conditions, of the solution of
\begin{equation}
\label{special allen-cahn equation}
\frac{\partial v^\epsilon}{\partial t}=\Delta v^\epsilon 
+\frac{1}{\epsilon^2}v^\epsilon (1- v^\epsilon)(2 v^{\epsilon} -1),\qquad v^\epsilon(0, x)=p(x),
\end{equation}
to the indicator function of a set whose boundary evolves according to 
mean curvature flow.

The initial condition, $p$, of~(\ref{special allen-cahn equation})
is  assumed to take values in $[0,1]$. We shall also require that
it satisfies some regularity conditions. In particular, set
$$\Gamma=\l\{x\in\R^\dim : p(x)=\frac{1}{2}\r\}.$$
We suppose that $\Gamma$ is a smooth hypersurface which is also the boundary
of a bounded open set which is topologically equivalent to the sphere. 
We impose the following regularity conditions:
\begin{enumerate}[leftmargin=1.1cm]
\item[{\Ca}] $\Gamma$ is $C^{\alpha}$ for some $\alpha>3$.
\item[{\Cb}] For $x$ inside $\Gamma$, $p(x)<\tfrac{1}{2}$. For $x$ outside $\Gamma$, $p(x)>\tfrac{1}{2}$.
\item[{\Cc}] There exist $r,\gamma>0$ such that, for all $x\in\R^{\dim}$, 
$|p(x)-\frac{1}{2}|\geq \gamma\,\big(\text{dist}(x,\Gamma)\wedge r\big)$.
\end{enumerate}
In particular, we can think of $\Gamma$ as the image of the boundary of 
the unit sphere under a map $f$ for which $|f(x)-f(y)|=\mc{O}(|x-y|^{\alpha})$.
Condition {\Cc} prevents the slope of $p$ near the interface $\Gamma$ 
from being too shallow, and keeps $p(x)$ bounded away from $\frac{1}{2}$ 
when $x$ is not near the interface. 
Condition {\Cb} is simply establishing a sign convention.  
Under these conditions, mean curvature flow started from $\Gamma$, which we denote $(\v{\Gamma}_t(\cdot))_t$, exists
up to some finite time $\mathscr{T}$ (e.g.~\cite{evans/spruck:1991}).

To give a precise statement of the result, we require some more notation.
Let $d(x,t)$ be the signed distance from $x$ to $\v{\Gamma}_t$, 
chosen to be negative 
inside $\v{\Gamma}_t$ and positive outside. Note that, as sets,
$$\v{\Gamma}_t=\{x\in\R^\dim : d(x,t)=0\}.$$

\begin{theorem} \label{theorem ac to cf}
Let $v^\epsilon$ 
solve~(\ref{special allen-cahn equation}) with initial condition $p$
satisfying the conditions {\Ca}-{\Cc}, and define $\mathscr{T}$, $d(x,t)$ as above. 
Fix $T^*\in (0,\mathscr{T})$. 
Let $k\in\N$. There exists 
$\epsilon_\dim(k)>0$, and $a_\dim(k),c_\dim(k)\in(0,\infty)$ such 
that for all $\epsilon\in(0,\epsilon_\dim)$ and $t$ satisfying 
$a_\dim\epsilon ^2 |\log \epsilon |\leq t\leq T^*,$
\begin{enumerate}
\item for $x$ such that $d(x,t)\geq c_\dim\epsilon |\log \epsilon|$, we have 
$v^\epsilon (t,x)\geq 1-\epsilon^k$;
\item for $x$ such that $d(x,t)\leq -c_\dim\epsilon |\log \epsilon|$, we have 
$v^\epsilon (t,x)\leq \epsilon^k$.
\end{enumerate}
\end{theorem}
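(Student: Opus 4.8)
The plan is to prove the theorem through a McKean-type probabilistic representation of $v^\epsilon$ and then to read off the motion of the interface from the geometry of the signed distance function $d(\cdot,t)$. First I would rewrite the reaction term: since
\[
v(1-v)(2v-1) = (3v^2-2v^3) - v = M(v)-v,
\]
where $M(v)=3v^2-2v^3 = \P[\text{at least two of three independent Bernoulli}(v)\text{ equal }1]$ is the probability that a three-way majority vote returns $1$, equation~(\ref{special allen-cahn equation}) is the backward (Feynman--Kac) equation of the following system: run a branching Brownian motion from $x$ whose particles diffuse with generator $\Delta$ and branch at rate $\epsilon^{-2}$ into three offspring; stop at time $t$; give each leaf an independent vote in $\{0,1\}$ that equals $1$ with probability $p$ evaluated at the leaf's position; and propagate votes up the tree by three-way majority. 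Decomposing on the first branch event and invoking uniqueness of bounded solutions yields $v^\epsilon(t,x)=\P_x[\text{the root votes }1]$. (This is the deterministic shadow of the branching--coalescing dual used for the \slfvs; for~(\ref{special allen-cahn equation}) no coalescence is needed.)

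Next I would isolate the \emph{amplification} mechanism. The map $M$ fixes $0,\tfrac12,1$, with $M'(\tfrac12)=\tfrac32>1$ (so $\tfrac12$ is repelling) and $M'(0)=M'(1)=0$, indeed $1-M(1-\eta)=3\eta^2+O(\eta^3)$ (so $0,1$ are super-attracting). Iterating the recursive majority over $n$ generations therefore drives any leaf-bias bounded away from $\tfrac12$ to within $O(\rho^{cn})$ of $\{0,1\}$. Along a typical lineage the number of generations is $\Theta(t\epsilon^{-2})$, so $t\geq a_\dim\epsilon^2|\log\epsilon|$ gives of order $|\log\epsilon|$ generations, enough to reach $1-\epsilon^k$ once the bias is of order one; choosing $a_\dim$ (depending on $k$) large and using a concentration estimate on generation counts makes this precise. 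The one-dimensional fixed-point profile of this amplification is the standing wave $W$ solving $W''+g(W)=0$ with $g(v)=v(1-v)(2v-1)$, $W(\mp\infty)=\{0,1\}$, $W(0)=\tfrac12$; linearising at $1$ gives $W(z)=1-O(e^{-z})$, so $W(d/\epsilon)\geq 1-\epsilon^k$ once $d\geq c_\dim\epsilon|\log\epsilon|$ with $c_\dim>k$. This is the origin of both threshold scales in the statement.

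The core task is then to show that the bias seen from $(t,x)$ is governed by the signed distance to the \emph{evolved} interface $\v{\Gamma}_t$ rather than to $\Gamma$, so that the emergent front moves by mean curvature. Here the defining relation of the flow enters: $d$ satisfies $|\nabla d|=1$ and, on $\v{\Gamma}_t$, $\partial_t d = \Delta d$ (normal velocity equals scalar mean curvature). Substituting the ansatz $v^\epsilon\approx W(d(\cdot,t)/\epsilon)$ into~(\ref{special allen-cahn equation}), the $\epsilon^{-2}$ terms cancel by the profile equation and the $\epsilon^{-1}$ terms cancel by $\partial_t d=\Delta d$ on $\v{\Gamma}_t$; the residual discrepancy is controlled by the $C^\alpha$ regularity of $\v{\Gamma}_t$ on $[0,T^*]$ with $T^*<\mathscr{T}$. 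I would convert this into rigorous upper and lower bounds by building space--time barriers $W\big((d\pm c\epsilon|\log\epsilon|)/\epsilon\big)$ and verifying them as super/sub-solutions \emph{probabilistically} — coupling the leaf votes of $v^\epsilon$ to those of the barrier and comparing the majority-vote functionals — rather than through the classical maximum principle, which is the step that does not adapt to the stochastic \slfvs.

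The principal difficulty is precisely this geometric step. Along a single lineage the Brownian displacement over a time up to $T^*=\mc{O}(1)$ is of order one, so the leaf cloud spreads on the macroscopic scale of $\Gamma$ and straddles the interface; the vote is then decided by a delicate, curvature-sensitive balance between the two sides rather than by a local reading of $p$. Quantifying this balance — extracting the correct normal velocity from the second-order geometry of how the diffusing cloud meets the curved, moving interface, while keeping every error term below the $\epsilon|\log\epsilon|$ boundary-layer width and uniform for $t\leq T^*$ — is the heart of the argument and the step I expect to be hardest.
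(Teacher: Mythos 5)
Your representation of $v^\epsilon$ as the root-vote probability of a rate-$\epsilon^{-2}$ ternary branching Brownian motion with three-way majority voting is exactly the paper's starting point (Theorem~\ref{thm:ACdual}), and your analysis of the amplification map $M=g$ matches Lemma~\ref{g_iteration}. But already in the flat one-dimensional case your amplification step has a gap: for $t$ of order one the leaves spread a distance of order one, so a constant fraction of them lie on the wrong side of the interface and their biases are \emph{below} $\tfrac12$; iterating $g$ from ``leaf-bias bounded away from $\tfrac12$'' is therefore not available, and the bias that is actually available near the threshold $d\sim\epsilon|\log\epsilon|$ is only $\tfrac12+\Theta(\epsilon)$. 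The paper's fix is structural: a symmetry-and-monotonicity induction on trees (Lemma~\ref{no_demagnification}) showing that majority voting can never demagnify the single-particle bias $P_z[B_t\geq 0]$, at any position and for any tree, combined with splitting $[0,t]$ into a short window $[0,\delta_1]$ with $\delta_1=a_1\epsilon^2|\log\epsilon|$ — during which, with high probability, no particle moves more than $\mathcal{O}(\epsilon|\log\epsilon|)$ and an embedded regular ternary tree of depth $A|\log\epsilon|$ performs the amplification — while the remaining time $[\delta_1,t]$ is controlled \emph{only} through the no-demagnification bound. Your proposal contains neither the no-demagnification lemma nor this time-splitting, and ``a concentration estimate on generation counts'' does not substitute for them.

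The more serious gap is the geometric step, which you correctly identify as the heart of the matter but then outsource to a standing-wave barrier argument that cannot be run in your framework. Verifying that $W\l((d\pm c\epsilon|\log\epsilon|)/\epsilon\r)$ are super/sub-solutions and comparing them with $v^\epsilon$ is precisely Chen's maximum-principle proof; you propose instead to compare ``probabilistically, coupling the leaf votes of $v^\epsilon$ to those of the barrier'', but the standing wave $W$ is not a voting probability — it has no dual tree and no leaf votes — so there is nothing to couple to, and once the maximum principle is forgone (as you must, since the whole point is to have an argument that survives in the \slfvs{} setting) the barrier inequality has no proof mechanism. The paper's resolution is different: the comparison object is itself a voting probability, namely the one-dimensional root-vote probability with Heaviside initial data (Theorem~\ref{thm:BBMone}); the signed distance $d(W_s,t-s)$ of each dual particle to the \emph{time-reversed} flow is coupled to a one-dimensional Brownian motion via It\^{o}'s formula (Proposition~\ref{prop:coupling1}), where $|\nabla d|=1$ gives the martingale part by L\'{e}vy's characterisation and $\dot d=\Delta d$ on the interface together with the $C^\alpha$ regularity bounds the drift by $C_0\beta s$ near the interface; and the comparison is propagated through branching by a bootstrap/contradiction over the first branching event (Proposition~\ref{prop:contra} and Lemma~\ref{lem:keylemma_2}), with errors inflated as $K_1e^{K_2 t}\epsilon|\log\epsilon|$ and absorbed using a lower bound on the slope of the one-dimensional interface (Corollary~\ref{lem:high_deriv}). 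These ingredients — the coupling along lineages, the bootstrap across branching events, and the slope estimate — constitute the actual proof of the theorem and are absent from your proposal.
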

This result is not new; it is a special case of Theorem~3 of \cite{chen:1992}. 
Indeed, our
proof will display the same key steps: 
first we show that an interface develops; second 
we show that this interface propagates according to (mean) curvature flow.
To achieve the second step, we couple the distance between a
$\dim$-dimensional Brownian
motion and the interface $\Gamma_s$ with a one-dimensional Brownian motion. 
This parallels the approximation of the solution to the Allen-Cahn equation 
by a one-dimensional standing wave in the proof of \cite{chen:1992} (although
we remark that we achieve our coupling through a different perturbation of
the potential than that used by \cite{chen:1992}).
Both steps of our proof use
probabilistic arguments, exploiting a duality 
between solutions to~(\ref{special allen-cahn equation})
and a branching Brownian motion, which is of some interest
in its own right.

\subsection{Modelling hybrid zones}
\label{slfvs for hybrid zones}

Let us now turn to our model of hybrid zones.
Our
starting point is the spatial $\Lambda$-Fleming-Viot process with selection.
The model we consider here is a modification of that introduced for 
genic selection (selection
in favour of just one of the alleles) in \cite{etheridge/veber/yu:2014}, 
and existence of 
the process follows by the same arguments. 
Also as for genic selection, uniqueness follows 
from duality with a system of branching and coalescing particles,
although there is a slight twist in the form that duality takes 
(see Section~\ref{duality for SLFVS}),
mirroring our probabilistic
representation of solutions to~(\ref{AC1}).

We suppose that there are two alleles, $a$ and $A$.
At each time $t$, the 
random function $\{w_t(x),\, x\in \R^\dim\}$ 
is defined, up to a Lebesgue null set of $\R^\dim$, by
\begin{equation}
\label{defn of w}
w_t(x):= \hbox{ proportion of type }a\hbox{ at spatial position }x\hbox{ at time }t.
\end{equation}
In other words, if we sample an allele from the point $x$ at time $t$,
the probability that it is of type $a$ is $w_t(x)$.
\begin{remark}
It is convenient to extend the definition of $w_t(x)$ to all of $\R^{\dim}$ and
so,
on the Lebesgue null set on which~(\ref{defn of w}) is not sufficient to 
specify $w_t(x)$, we shall arbitrarily impose $w_t(x)=0$.
\end{remark}
A construction of an appropriate state space for 
$x\mapsto w_t(x)$ can be found in \cite{veber/wakolbinger:2015}. 
Using the identification
$$
\int_{\R^d} \big\{w(x)f(x,a)+ (1-w(x))f(x,A)\big\}\, dx=\int_{\R^d\times \{a,A\}} f(x,\kappa) M(dx,d\kappa),
$$
this state space is in one-to-one correspondence with the space
${\cal M}_\lambda$ of measures on $\R^\dim \times\{a,A\}$ with `spatial marginal' Lebesgue measure,
which we endow with the topology of vague convergence. By a slight abuse of notation, we also denote the
state space of the process $(w_t)_{t\in\R}$ by 
${\cal M}_\lambda$.

\begin{defn}[Spatial $\Lambda$-Fleming-Viot with selection 
against heterozygosity ({\slfvs})]
\label{slfvdefn}
Fix $u\in (0,1]$ and $\mc{R}\in(0,\infty)$.  
Let $\mu$ be a finite measure on $(0,\mc{R}]$.
Further, let $\Pi$ be a Poisson point process on 
$\R_+\times \R^\dim \times (0,\mc{R}]$ 
with intensity measure 
\begin{equation}\label{slfvdrive}
dt\otimes dx\otimes \mu(dr). 
\end{equation}
The {\em spatial $\Lambda$-Fleming-Viot process with selection} (SLFVS) 
driven by $\Pi$ is the ${\cal M}_\lambda$-valued process 
$(w_t)_{t\geq 0}$ with dynamics given as follows.

If $(t,x,r)\in \Pi$, a reproduction event occurs at time $t$ within the 
closed ball $\mc{B}_r(x)$ of radius $r$ centred on $x$. 
With probability $1-\v{s}$ the event is {\em neutral}, in which case:
\begin{enumerate}
\item Choose a parental location $z$ uniformly at random within 
$\mc{B}_r(x)$, 
and a parental type, $\alpha_0$, according to $w_{t-}(z)$, that is
$\alpha_0=a$ with probability $w_{t-}(z)$ 
and $\alpha_0=A$ with probability $1-w_{t-}(z)$.
\item For every $y\in \mc{B}_r(x)$, set 
$w_t(y) = (1-u)w_{t-}(y) + u\ind_{\{\alpha_0=a\}}$.
\end{enumerate}
With the complementary probability $\v{s}$ the event is {\em selective},
in which case:
\begin{enumerate}
\item Choose three `potential' parental locations $z_1, z_2, z_3$ 
independently and uniformly at random within $\mc{B}_r(x)$, and at each 
of these sites `potential' parental types 
$\alpha_1$, $\alpha_2$, $\alpha_3$ according to $w_{t-}(z_1), w_{t-}(z_2), w_{t-}(z_3)$ respectively.
Let $\widehat{\alpha}$ denote the most common allelic type in 
$\alpha_1,\alpha_2,\alpha_3$.
\item For every $y\in \mc{B}_r(x)$ set 
$w_t(y) = (1-u)w_{t-}(y) + u\ind_{\{\widehat{\alpha}=a\}}$.
\end{enumerate}
\end{defn}
\begin{remark}
More generally, the parameter $u$, which we shall refer to as the 
{\em impact}, can be taken to be random.
In this case, for each $r\in (0,\mc{R}]$,
we let $\nu_r$ be a probability measure on $(0,1]$ and 
the driving noise, $\Pi$, is taken to be a Poisson point process on 
$\R_+\times \R^\dim \times (0,\mc{R}]\times (0,1]$ 
with intensity measure 
\begin{equation*}
dt\otimes dx\otimes \mu(dr)\nu_r(du).
\end{equation*}
For each point $(t,x,r,u)\in\Pi$, the corresponding reproduction event is
described exactly as before.
\end{remark}
Since $\v{s}$ is assumed small, as one expects in a model of genetic drift, to first order the variance of the
increment of the mean allele frequency in the region affected by an event is 
$u^2\bar{w}(1-\bar{w})$, where $\bar{w}$ is the mean of $w_{t-}$ over the
affected region.
Let us try to motivate the form of the selection mechanism, which is
what drives the expectation of the increments in allele frequencies. 
As is usual in population genetics, we have approximated a model
of selection acting on a diploid population (in which each individual
carries two copies of the gene) by one in which we think of selection
acting on single copies of the gene, but in a way that depends on the
local frequencies of the different alleles. This sort of approximation,
which goes back at least to \cite{fisher:1937},
is valid when the local population size is large, corresponding in our case
to the impact $u$ being small. (In fact we are interested in limits in which
the impact will tend to zero.) The idea is simple.
Each individual in the population
carries two copies of the gene. This subdivides the population into 
{\em homozygotes}, carrying either $aa$ or $AA$ and assumed equally fit,
and {\em heterozygotes} carrying $aA$ and assumed to have relative fitness 
$1-\v{s}$. The population is assumed to be in Hardy-Weinberg proportions,
so that if the proportion of $a$-alleles in the parental population is
$\bar{w}$, then the proportions of parents that are of type $aa$, $aA$ and 
$AA$ are $\bar{w}^2$, $2\bar{w}(1-\bar{w})$ and $(1-\bar{w})^2$,
respectively.
During reproduction, each individual produces a very 
large number of germ cells (cells of the same genotype). 
To reflect the relative fitnesses, a heterozygote produces 
$(1-\v{s})$ times as many germ cells as a homozygote.
Germ cells then split into an effectively infinite pool of gametes
(cells containing just one chromosome from each pair) which fuse at
random to form diploid offspring.
Suppose that the proportion of 
type $a$ alleles in the affected region
immediately before reproduction is $\bar{w}$. 
Then the probability that a gamete sampled from the pool is of type $a$ is
\begin{eqnarray}
\frac{\bar{w}^2+\bar{w}(1-\bar{w})(1-\v{s})}{1-2\v{s}\bar{w}(1-\bar{w})}
&=&(1-\v{s})\bar{w}+\v{s}(3\bar{w}^2-2\bar{w}^3)+{\mathcal O}(\v{s}^2)
\nonumber\\
&=& (1-\v{s})\bar{w}+\v{s}(\bar{w}^3+3\bar{w}^2(1-\bar{w}))+{\mathcal O}(\v{s}^2).
\label{selection mechanism}
\end{eqnarray}
Notice that the first term in~(\ref{selection mechanism})
is $1-\v{s}$ times the probability that
an allele sampled from the parental population is of type $a$ whereas the
second is $\v{s}$ times the probability that the majority of three 
alleles sampled independently
from the parental population are of type $a$. This then
motivates the two types of event in our SLFVS. 
In particular,
if we replace a proportion $u$ of the population by offspring, then 
the expected increment in $\bar{w}$ is 
$$u \v{s}(\bar{w}^3+3\bar{w}^2(1-\bar{w})-\bar{w})
=u\v{s}\bar{w}(1-\bar{w})(2\bar{w}-1),$$
which underpins the connection to~(\ref{AC1}).

Of course, in replacing a diploid model by one based directly on allele
frequencies, we have rather muddied the notion of parent in our reproduction
mechanism, 
so the use of the term in Definition~\ref{slfvdefn}
should not be interpreted too literally.

\subsection{Convergence of the hybrid zone to mean curvature flow}
\label{sec:slfvs_intro}

To understand our main result, first we state a
simple modification of a
result on a rescaling of the SLFVS from
\cite{etheridge/veber/yu:2014}.
To state that result, we specialise to 
$\mu(dr)= \delta_R(dr)$, for some fixed $R>0$. 
At the $n$th stage of the rescaling, the impact and selection parameters
are assumed to satisfy
$$
u_n = \frac{u}{n^{1-2\beta}}, \qquad \mbox{and} 
\qquad \v{s}_n=\frac{\rho}{n^{2\beta}}.
$$
Next, we define the averaged process,
$$
w^n_t(x) := w_{nt}(n^{\beta}x), \qquad \hbox{and}\qquad 
\bar{w}^n_t(x):=\frac{n^{\beta \dim}}{V_R}\, \int_{B(x,n^{-\beta}R)}w^n_t(y)\, dy,
$$
where $V_R$ is the volume of the ball of radius $R$ in $\R^{\dim}$.
To simplify notation, we write ${\mathcal M}$ for ${\mathcal M}_\lambda(\R^{\dim} \times\{a,A\})$,
and $D_{\mathcal M}[0,\infty)$ for the set of all 
c\`adl\`ag paths with values in
${\mathcal M}$. We also write $C_c^\infty(\R^{\dim})$ for the set of smooth
compactly supported functions on $\R^{\dim}$.
\begin{theorem}\label{th:evy}[Modification of Theorem~1.3 of~\cite{etheridge/veber/yu:2014}]
Suppose that 
$\beta\in (0,1/3)$, and that
$\bar{w}^n_0$ converges weakly to some 
$w^0\in {\mathcal M}$.  Then, as $n\rightarrow \infty$, the process
$(\bar{w}_t^n)_{t\geq 0}$
converges weakly in $D_{\mathcal M}[0,\infty)$ towards a process
$(w_t^\infty)_{t\geq 0}$
with initial value
$w^{\infty}_0=w^0$. Furthermore,
$(w_t^\infty)_{t\geq 0}$
is the unique deterministic process
for which, for every $f\in C^{\infty}_c(\R^{\dim})$,
$$
\langle w^\infty_t,f\rangle = \langle w_0^\infty,f\rangle +
\int_0^t \bigg\{\frac{\kappa_R}{2}\, \langle w_s^\infty ,\Delta f\rangle 
+ u\rho V_R\, \langle w_s^\infty (1-w_s^\infty) (2w_s^\infty-1)
,f\rangle\bigg\}\, ds,
$$
where
\begin{equation}\label{def Gamma}
\kappa_R= 
\frac{u}{V_R}\int_{B(0,R)}\int_{B(x,R)}(z_1)^2dz\, dx
\end{equation}
with 
$z_1$ the
first coordinate of the vector $z\in\R^{\dim}$. In particular, $\kappa_R$
depends only on $R$ and $\dim$.
\end{theorem}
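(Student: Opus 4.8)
The plan is to follow the strategy of \cite{etheridge/veber/yu:2014}, the only genuinely new ingredient being the form of the selection term, and to establish the result through the three standard steps for scaling limits of measure-valued processes: a semimartingale decomposition of $\langle w^n_t,f\rangle$, tightness of the laws in $D_{\mathcal M}[0,\infty)$, and identification of the deterministic limit together with its uniqueness. First I would fix $f\in C_c^\infty(\R^\dim)$ and write, using the generator read off from Definition~\ref{slfvdefn},
\begin{equation*}
\langle w^n_t,f\rangle = \langle w^n_0,f\rangle + \int_0^t \mathcal{A}^n\langle w^n_s,f\rangle\, ds + M^n_t(f),
\end{equation*}
where $M^n(f)$ is a martingale. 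It is convenient to work in the original coordinates with the slowly varying test function $f_n=f(n^{-\beta}\cdot)$ and the time change $t\mapsto nt$, noting that $\langle w^n_t,f\rangle = n^{-\beta\dim}\langle w_{nt},f_n\rangle$; the drift then splits into a neutral and a selective part according to the two event types, and the whole problem reduces to analysing these two pieces and the quadratic variation of $M^n(f)$. Since $\bar w^n$ and $w^n$ differ only by averaging against a ball of radius $n^{-\beta}R\to 0$, it suffices to analyse $\langle w^n_t,f\rangle$ and transfer the conclusion.

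Second, I would treat the two drift contributions separately. For a neutral event in $B_R(x)$ the expected increment of $\langle\cdot,f_n\rangle$ is $\tfrac{u_n}{V_R}\int_{B_R(x)}\int_{B_R(x)}[w_{s-}(z)-w_{s-}(y)]f_n(y)\,dz\,dy$; Taylor expanding $f_n$, the zeroth and first order terms cancel by the symmetry of the ball, and the second order term, after integrating over the event centre, multiplying by the time factor $n$ and the impact $u_n=un^{2\beta-1}$, produces exactly $\tfrac{\kappa_R}{2}\langle w^n_s,\Delta f\rangle$ in the limit, with $\kappa_R$ the second--moment integral in~(\ref{def Gamma}). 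For a selective event, conditioning on the three potential parental locations $z_1,z_2,z_3\in B_R(x)$ makes the types independent Bernoulli with means $w_{s-}(z_i)$, so $\P(\widehat\alpha=a)$ is the probability that at least two are of type $a$; collapsing the three locations to a common point of local density $\bar w$ (justified below) this equals $\bar w^3+3\bar w^2(1-\bar w)$, giving a mean local increment $u_n\v{s}_n\big(3\bar w^2-2\bar w^3-\bar w\big)=u_n\v{s}_n\,\bar w(1-\bar w)(2\bar w-1)$. Integrating the affected ball against $f_n$ contributes the factor $V_R$, and since $nu_n\v{s}_n=u\rho$ this scales to the claimed $u\rho V_R\,\langle w^\infty_s(1-w^\infty_s)(2w^\infty_s-1),f\rangle$.

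The hard part is justifying the collapsing of the three-point function, and it is precisely where the spatial average $\bar w^n$ and the constraint $\beta<1/3$ enter. The three potential parents sit in a ball of radius $n^{-\beta}R$, which vanishes in rescaled coordinates at exactly the scale over which $\bar w^n$ averages, so I would prove a lemma to the effect that $w^n$ becomes locally constant on this scale: the within-ball fluctuations are negligible, so that $\E[w^n(z_1)w^n(z_2)w^n(z_3)]$ and the analogous mixed second-order terms converge to the corresponding powers of $\bar w^n$, hence of $w^\infty$. This step, which controls the two- and three-point correlations of the SLFVS over a shrinking ball, is the main obstacle, and is where one invokes the moment estimates of \cite{etheridge/veber/yu:2014} (equivalently, the duality with the branching and coalescing random walk); the threshold $\beta<1/3$ is imposed to guarantee that selection acts slowly enough for these correlations to decouple. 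For the martingale I would bound its predictable quadratic variation: each reproduction event changes $\langle w^n_s,f\rangle$ by $O(u_n)$ times a vanishing volume factor, and summing over events one finds $\langle M^n(f)\rangle_t\to 0$, which forces the limit to be deterministic.

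Finally, tightness of $(\bar w^n)$ in $D_{\mathcal M}[0,\infty)$ follows from the Aldous--Rebolledo criterion applied to $\langle\bar w^n_t,f\rangle$, using the uniform bounds on the drift and on $\langle M^n(f)\rangle$ obtained above together with the vague compactness of $[0,1]$-valued densities; as noted, $\bar w^n$ and $w^n$ share the same limit points. Any such limit point solves the stated weak equation, and since $w\mapsto w(1-w)(2w-1)$ is Lipschitz on the bounded set of $[0,1]$-valued functions, uniqueness of the solution is classical. This identifies the limit uniquely and upgrades convergence along subsequences to weak convergence of the whole sequence, completing the proof.
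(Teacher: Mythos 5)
First, a point of comparison: the paper does not actually prove Theorem~\ref{th:evy}. It is stated as a modification of Theorem~1.3 of \cite{etheridge/veber/yu:2014}, and the authors explicitly defer to the arguments of that reference. So your proposal can only be measured against the strategy there, which is indeed the generator/martingale-problem route you outline. Your skeleton is essentially right, and several of your computations check out: the semimartingale decomposition, the neutral-event drift producing $\tfrac{\kappa_R}{2}\langle w,\Delta f\rangle$ with exactly the $\kappa_R$ of~\eqref{def Gamma}, the identity $3p^2-2p^3-p=p(1-p)(2p-1)$, the scaling $nu_n\v{s}_n=u\rho$, the vanishing quadratic variation forcing a deterministic limit, and Aldous--Rebolledo tightness.

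However, you misidentify the main technical obstacle, and your explanation of the hypothesis $\beta\in(0,1/3)$ is not correct. No lemma is needed to ``collapse the three-point function'': conditionally on $w_{s-}$, the three parental locations are sampled independently and uniformly from $\mc{B}_r(x)$ and the types are then conditionally independent Bernoulli, so the conditional probability that the majority type is $a$ equals $g(\bar{w})=3\bar{w}^2-2\bar{w}^3$ \emph{exactly}, where $\bar{w}$ is the ball average of $w_{s-}$. There is no approximation and no correlation estimate at this step; your proposed lemma conflates this exact conditional expectation with an expectation over the law of the process. The genuine difficulty sits one step later: a subsequential limit of $(\bar{w}^n)$ is only a limit in the vague topology on $\mathcal{M}_\lambda$, and nonlinear functionals such as $\langle \bar{w}^n(1-\bar{w}^n)(2\bar{w}^n-1),f\rangle$ are not continuous in that topology. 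One must compare averages over the shrinking balls of radius $n^{-\beta}R$ with averages over balls of fixed radius $\delta$ (for which vague convergence does suffice), control the error by moment/correlation estimates for the SLFVS --- in \cite{etheridge/veber/yu:2014} these come from the branching and coalescing dual --- and then send $\delta\to 0$ using the regularity of the deterministic limit. As for $\beta<1/3$: this cannot be about ``selection acting slowly'', since the per-lineage selective rate is $\Theta(n^{2\beta}\v{s}_n)=\Theta(1)$ for every $\beta$. What $\beta$ tunes is the impact $u_n=un^{2\beta-1}$, i.e.\ the strength of genetic drift; the restriction is what makes the martingale part and the coalescence in the dual negligible (compare the discussion following Theorem~\ref{thm:slfvs_dual}, where the cruder bound $nu_n^2=n^{4\beta-1}$, together with the larger number of lineages present there, forces $\beta<1/4$). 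So before your outline could be completed, the ``main obstacle'' lemma needs to be re-aimed at the passage to the limit of nonlinear functionals of local averages, and the role of $\beta$ corrected.
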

In other words, up to a change of coefficients,
$(w_t^\infty)_{t\geq 0}$ is a weak solution 
of~(\ref{special allen-cahn equation}) 
with $w_0=w^0$.
Based on Theorem~\ref{th:evy}, 
it is natural to ask whether we can modify the scaling of $\v{s}_n$ in such a 
way that $\v{s}_nn^{2\beta}\rightarrow\infty$ as $n\rightarrow\infty$ and 
obtain convergence to the indicator function of a region whose boundary 
evolves according to mean curvature flow.
In other words, does genetic drift, 
which is driven by the neutral events in the SLFVS, disrupt that 
convergence?

To state our result, 
we first rescale the {\slfvs} as in Theorem~\ref{th:evy}.
For each $n\in\N$, we define the finite measure $\mu^n$ on $(0, \mathcal R_n]$,
where $\mathcal R_n = n^{-\beta}\mathcal R$, 
by $\mu^n(A)=\mu(n^{\beta}A)$
for all Borel subsets $A$ of $(0,\infty)$.
Our rescaled {\slfvs} will be driven by the Poisson point process $\Pi^n$ on 
$\R_+ \times \R^{\dim} \times (0,\infty)$ with intensity measure 
\begin{equation}\label{eq:slfvs_intensity_intro}
n dt\otimes n^{\beta} dx\otimes \mu^n(dr).
\end{equation}
Here $n^\beta dx$ denotes the scaling in which the linear dimension of
the infinitesimal region $dx$ is
scaled by $n^\beta$ (so that when we integrate, the volume of a region
is scaled by $n^{\dim\beta}$).
Let 
\begin{equation}
\label{scalings}
u_n = \frac{u}{n^{1-2\beta}}, \qquad\mbox{and}\qquad 
\v{s}_n = 
\frac{1}{\epsilon_n^{2}}\frac{1}{n^{2\beta}}.
\end{equation}
It is convenient to define the constant $\sigma^2$ through
\begin{equation}
\label{defn of sigma}
\sigma^2
=\frac{u}{2\dim}\int_0^{\mc{R}}\int_{\R^\dim}|z|^2\frac{V_r(0,z)}{V_r}dz\mu(dr).
\end{equation}
If $\mu(dr)=\delta_R(r)$, then we recover $\kappa_R$ from~\eqref{def Gamma}.
\begin{theorem} \label{thm:slfvs}
Suppose that $\beta\in(0,1/4)$ and let 
$\epsilon_n$ be a sequence such that $\epsilon_n\to 0$ and 
$(\log n)^{1/2}\epsilon_n\to\infty$ as $n\rightarrow\infty$.
Let $(w_t^n)_{t\geq 0}$ be the {\slfvs} driven by $\Pi^n$ and with
$u_n$, $\v{s}_n$ given by~(\ref{scalings}), and initial condition 
$w_0^n(x)=p(x)$.
Assume that $p$ satisfies
{\Ca}-{\Cc},
and define $\mathscr{T}$, $d(x,t)$ as for Theorem \ref{theorem ac to cf};
take $T^*<\mathscr{T}$.
For $k\in\N$ there exist $n_*(k)<\infty$, 
and $a_*(k),d_*(k)\in(0,\infty)$ such that for all $n\geq n_*$ and all $t$ satisfying 
$a_* \epsilon_n ^2 |\log \epsilon_n |\leq t\leq T^*$, 
\begin{enumerate}
\item for almost every $x$ such that $d(x,\sigma^2 t)\geq d_* \epsilon_n |\log \epsilon_n|$, we 
	have $\E\l[w^n_t(x)\r]\geq 1-\epsilon_n^k$;
\item for almost every $x$ such that $d(x,\sigma^2 t)\leq -d_* \epsilon_n |\log \epsilon_n|$, 
	we have $\E\l[w^n_t(x)\r]\leq \epsilon_n^k$.
\end{enumerate}
\end{theorem}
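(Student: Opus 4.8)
The plan is to reduce the statement to the deterministic Theorem~\ref{theorem ac to cf} by means of the branching-and-coalescing dual of the {\slfvs}. First I would set up a moment duality expressing $\E[w_t^n(x)]$ as the success probability of a recursive ternary majority vote carried out over the genealogy of a single ancestral lineage started at $x$ and run backwards to time $0$. Each lineage performs a random walk driven by the reproduction events of $\Pi^n$; at each selective event it branches into three potential ancestral lineages, matching the ``majority of three parents'' rule in Definition~\ref{slfvdefn}; and two lineages caught by a common reproduction event may coalesce, which is precisely the genetic drift we wish to show is negligible. At time $0$ each surviving leaf at position $y$ is independently declared to be of type $a$ with probability $p(y)$, and the votes are propagated up the tree, so that duality gives $\E[w_t^n(x)]=\P(\text{root votes }a)$.

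The comparison target is the analogous representation underlying the proof of Theorem~\ref{theorem ac to cf}: the solution $v^{\epsilon_n}(\sigma^2 t,x)$ of~(\ref{special allen-cahn equation}) is the majority-vote success probability over a branching Brownian motion, with no coalescence and with Brownian rather than random-walk spatial motion. The time argument $\sigma^2 t$ enters because the dual random walk has asymptotic diffusivity $\sigma^2$ as defined in~(\ref{defn of sigma}); after the rescaling~(\ref{scalings}) an invariance principle identifies its law over real time $t$ with that of Brownian motion run for time $\sigma^2 t$. The branching rate of the dual is of order $1/\epsilon_n^2$, matching the $1/\epsilon_n^2$ prefactor of the nonlinearity, so the two branching mechanisms agree up to fixed constants that rescale $\epsilon_n$ by a bounded factor and are absorbed into $a_*$ and $d_*$.

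The core of the argument is then a coupling of the branching-coalescing random-walk dual to the branching Brownian motion, controlling the three sources of discrepancy, namely coalescence, the walk-versus-Brownian-motion position error, and finite-size corrections to the branching mechanism, and showing that each changes the vote probability by at most a polynomially small (in $n$) amount. The key quantitative observation is that the branching rate $1/\epsilon_n^2$ is $o(\log n)$ under the hypothesis $(\log n)^{1/2}\epsilon_n\to\infty$, so the dual carries only $e^{O(1/\epsilon_n^2)}=n^{o(1)}$ lineages up to time $T^*$. Since each pair of lineages coalesces with probability at most $n^{-c}$ for some $c=c(\beta)>0$, which is where $\beta<1/4$ enters, the stronger condition compared with the $\beta<1/3$ of Theorem~\ref{th:evy} being forced by the much larger selection $\v{s}_n=\epsilon_n^{-2}n^{-2\beta}$ and hence the far greater number of lineages, the expected number of genealogically relevant coalescences is $n^{o(1)}\cdot n^{-c}\to 0$. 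The position error from the invariance principle and the branching corrections are handled by careful but routine moment estimates and are likewise $O(n^{-c})$.

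Finally I would transfer the estimates. Having shown $|\E[w_t^n(x)]-v^{\epsilon_n}(\sigma^2 t,x)|\le n^{-c}$ uniformly over $x$ and over $a_*\epsilon_n^2|\log\epsilon_n|\le t\le T^*$, where the lower time bound is inherited (up to the factor $\sigma^2$) from the constant $a_\dim$ governing interface formation in Theorem~\ref{theorem ac to cf}, I invoke that theorem with a slightly larger exponent $k'>k$: for $d(x,\sigma^2 t)\ge d_*\epsilon_n|\log\epsilon_n|$ it gives $v^{\epsilon_n}(\sigma^2 t,x)\ge 1-\epsilon_n^{k'}$, and since $\epsilon_n^{k}\ge(\log n)^{-k/2}\gg n^{-c}$ the polynomial error is swallowed, yielding $\E[w_t^n(x)]\ge 1-\epsilon_n^{k}$; the complementary bound is symmetric. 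Note that both interface formation and propagation by curvature flow come for free from Theorem~\ref{theorem ac to cf} and need not be reproved. The main obstacle throughout is the coalescence control of the previous paragraph: unlike in Theorem~\ref{th:evy} the number of dual lineages is enormous, and the whole scheme hinges on the interplay between the sub-polynomial lineage count guaranteed by $(\log n)^{1/2}\epsilon_n\to\infty$ and the polynomially small coalescence probability guaranteed by $\beta<1/4$.
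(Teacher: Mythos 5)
Your reduction---duality (Theorem~\ref{thm:slfvs_duality}), removal of coalescence using $\beta<1/4$ together with the $n^{o(1)}$ bound on the number of lineages coming from $\epsilon_n^{-2}=o(\log n)$, and the identification of $\sigma^2$---matches the paper's Section~\ref{duality for SLFVS} (Lemma~\ref{lem:W_xin_close}, Corollary~\ref{cor:xi_W}, Lemmas~\ref{lem:no_nearby} and~\ref{lem:P=D}). The gap is the step you describe as routine: the claim that one can couple the coalescence-free dual to a branching Brownian motion well enough to obtain $|\E[w^n_t(x)]-v^{\epsilon_n}(\sigma^2 t,x)|\leq n^{-c}$ \emph{uniformly in $x$}, and then cite Theorem~\ref{theorem ac to cf} as a black box. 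No mechanism in your sketch produces such a bound. First, {\Ca}--{\Cc} do not make $p$ continuous (it is constrained only by one-sided inequalities), so coupled leaves at distance $n^{-\beta/6}$ may have vote probabilities differing by $\Theta(1)$; in particular, at small times the dual's vote probability (essentially $p(x)$, since with high probability there are no jumps and no branches) and $v^{\epsilon_n}$ (a Gaussian average of $p$) differ by $\Theta(1)$ near $\Gamma$, and any recursive or Gronwall-type comparison over $[0,t]$ necessarily reaches these small-time quantities after the last branching event, so the iteration fails at its base case. Second, per-step errors are fed through $\Theta(\epsilon_n^{-2})$ rounds of majority voting with $g'(\tfrac12)=\tfrac32>1$, so a sup-norm recursion amplifies errors exactly where the comparison is hardest, in the strip $|d(x,\sigma^2 t)|=\mathcal{O}(\epsilon_n|\log\epsilon_n|)$. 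There, each of the two functions tracks $\Gamma_{\sigma^2 t}$ only to accuracy $\mathcal{O}(\epsilon_n|\log\epsilon_n|)$, and nothing in your argument shows the two interfaces agree with \emph{each other} to accuracy $n^{-c}\ll\epsilon_n$; since both functions have slope of order $\epsilon_n^{-1}$ across the strip, even a tiny mismatch of interface positions destroys the uniform estimate. Your final transfer only needs information off the strip, but a sup-norm coupling cannot be localized away from it, because votes propagate through it.

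The paper's proof exists precisely because this shortcut is unavailable (see the introduction: ``available proofs of Theorem~\ref{theorem ac to cf} could not readily be adapted''). Rather than comparing with the $\dim$-dimensional object in sup-norm, the paper reruns both steps of the deterministic argument for the dual itself: generation of the interface (Proposition~\ref{prop:d=2_generation_slfvs}), which uses only the bias bound {\Cc} and is therefore insensitive to any discontinuity of $p$; and propagation via \emph{one-sided} comparisons with the \emph{one-dimensional} BBM vote evaluated at the shifted signed distance $d(x,\sigma^2 t)\pm K_1e^{K_2\sigma^2 t}\epsilon_n|\log\epsilon_n|$ (Lemma~\ref{lem:keylemma_slfvs} and Proposition~\ref{prop:contra_slfv}). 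The exponentially growing shift is the device that absorbs both the walk-versus-Brownian coupling error of Corollary~\ref{cor:xi_W} (the $3n^{-\beta/6}$ term appearing explicitly in Lemma~\ref{lem:keylemma_slfvs}) and the voting amplification: near the interface the shift is traded against the slope lower bound of Corollary~\ref{lem:high_deriv}, and a minimality-in-time contradiction argument closes the recursion. Your proposal contains no counterpart of this shifted, one-sided comparison---``interface formation and propagation come for free from Theorem~\ref{theorem ac to cf}'' is exactly the step that does not work---and repairing it means importing this machinery rather than the deterministic theorem itself.
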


\begin{remark}
In Section \ref{duality for SLFVS} we explain the origins of these scalings. 
By taking $u_n$ to be small, we are assuming that local population density is high.

By adapting ideas from~\cite{etheridge/freeman/penington/straulino:2015}, we
expect an analogous result for values of $u_n$ up to $\mc O(1)$, but
at the expense of having to take $\epsilon_n\rightarrow 0$ extremely slowly
(so that $\epsilon_n^{-1}=o(\log\log n)$). The stronger the genetic drift, that is the bigger $u_n$,
the larger the value of $n$ required for the diffusive rescaling to 
smooth the allele frequencies under the SLFVS
sufficiently for the behaviour to be close to that of 
the differential equation~\eqref{special allen-cahn equation}.
\end{remark}

The rest of the paper is laid out as follows. In 
Section~\ref{proof of ac to cf}
we establish a duality between equation~(\ref{AC1}) and a branching Brownian
motion which we then use to prove Theorem~\ref{theorem ac to cf}. In 
Section~\ref{proof of slfvs to cf} we establish 
an analogous duality between the
SLFVS and a system of branching and coalescing particles and use it
to establish Theorem~\ref{thm:slfvs}.

\section{Proof of Theorem~\ref{theorem ac to cf}}
\label{proof of ac to cf}

\subsection{A probabilistic dual to Equation~(\ref{special allen-cahn equation})}
\label{subsec:dual_bbm}

Our proof of Theorem~\ref{theorem ac to cf} rests on a duality between 
equation~(\ref{special allen-cahn equation}) and a branching Brownian 
motion in which each individual, independently,
follows a Brownian motion during an
exponentially distributed lifetime (with mean $\epsilon^2$) at the end of
which it splits into {\em three}. Although reminiscent of 
the duality between the Fisher-KPP equation and binary branching 
Brownian motion pioneered by
\cite{skorohod:1964} and \cite{mckean:1975}, here there is a slight twist. 
These papers allow
us to deal with equations of the form
$$\frac{\partial v}{\partial t}=\frac{1}{2}\Delta v +Vf(v),$$
where $V$ is a constant (the branching rate in the branching Brownian motion) 
and $f$ is of the form 
$f(v)=\Phi(v)-v$ where $\Phi(v)$ is the probability generating 
function of a non-negative 
integer-valued random variable (the number of offspring of each
individual in the branching Brownian motion). However, the expression for $f$ 
in~(\ref{special allen-cahn equation}) is not of this form.
Instead we adapt ideas from population genetics 
(notably from \cite{krone/neuhauser:1997, neuhauser/krone:1997}).

First, to maintain compatibility with the PDE 
literature, we shall adopt the convention that
\begin{equation}\label{eq:bm_rate_2}
\textit{all Brownian motions run at rate $2$.}
\end{equation}
That is, at time $1$, Brownian motion has variance $2$. 

In contrast to the McKean-Skorohod setting, our representation of
the solution to~(\ref{AC1}) is not just in terms of the spatial positions of
individuals in the branching Brownian motion at a fixed time, but also depends
on their genealogy. In other words, we have a duality between~(\ref{AC1}) and
the {\em historical process} of the branching Brownian motion.

To write this formally, we require some notation for our ternary 
branching Brownian motion.
We write $\v{W}(t)$ for the historical 
process (which traces out the space-time trees that record the spatial position
of all individuals alive at time $s$ for all $s\in [0,t]$).
This process can be constructed formally as the ternary branching Markov
process in which the position of an `individual' alive at time $s$ is
taken to be the whole Brownian path $(W_u)_{0\leq u\leq s}$ followed by
its ancestors.
To record the genealogy of the process we use Ulam-Harris notation
to label individuals in the branching Brownian motion by elements of
$\mc U =\bigcup_{m=0}^\infty \{1,2,3\}^m$.
For example, $(3,1,2)$ is the particle which is the 2\ts{nd} child of the 1\ts{st} child of the 3\ts{rd} child of the initial ancestor $\emptyset$. 
Let $N(t)\subset \mc U$ denote the set of individuals alive at time $t$.
We shall abuse notation slightly and 
write $(W_i(t))_{i\in N(t)}$ for the 
spatial locations of the individuals alive at time $t$,
and $(W_i(s))_{0\leq s\leq t}$ for the unique path that connects 
leaf $i$ to the root.

We say that $\mc{T}$ is a \textit{time-labelled ternary tree} if $\mc T$
is a finite subtree of $\mc U$ and each 
internal vertex $v$ of the tree is labelled with a time $t_v >0$, where 
$t_v$ is strictly greater than the label of the parent 
vertex of $v$. 
Evidently if we ignore the spatial position of individuals,
each realisation of $\v{W}(t)$ 
traces out a time-labelled ternary tree which records the genealogy and 
associates a time to each branching event. We shall use
$\mathcal{T}(\v{W}(t))$ to denote this 
time-labelled ternary tree.  

For a fixed function $p:\R^\dim \to [0,1]$,
we define a voting procedure on $\mathcal{T}(\v{W}(t))$ as follows.
\begin{enumerate}
\item Each leaf $i$ of $\mathcal{T}(\v{W}(t))$, independently, 
votes $1$ with probability $p(W_i(t))$ and otherwise votes $0$.
\item At each branch point in $\mathcal{T}(\v{W}(t))$, the vote of the parent 
particle $j$ is the 
majority vote of the votes of its three children $(j,1)$, $(j,2)$ and $(j,3)$.
\end{enumerate}
This defines an iterative voting procedure, which runs inwards from 
the leaves of $\mathcal{T}(\v{W}(t))$ to the root $\emptyset$. 
\begin{defn}[$\Vote _p $] \label{vote_defn}
With the voting procedure described above, we define
$ \mathbb{V} _p(\v {W}(t)) $
to be the vote associated to the root $\emptyset$. 
\end{defn}

For $x\in \R^\dim$, we write $\P ^\epsilon _x$ for the probability measure 
under which $(\v{W}(t),t \geq 0)$ has the law of the historical process
of ternary branching Brownian motion in $\R ^\dim$ with branching rate $1/\epsilon ^2$ started 
from a single particle at location $x$ at time $0$.
We write $\E ^\epsilon _x$ for the corresponding expectation.
\begin{theorem}\label{thm:ACdual}
Let $p:\R^{\dim}\to [0,1]$. Then 
\begin{equation}\label{eq:udef}
v^\epsilon (t,x)=\P^\epsilon_x\l[\Vote_p(\v{W}(t))=1\r]
\end{equation}
is a solution to equation~\eqref{special allen-cahn equation}
with initial condition $v^\epsilon (0,x)=p(x)$.
\end{theorem}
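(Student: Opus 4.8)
The plan is to run a first-step decomposition on the genealogy, conditioning on the first branching time $\tau$ of the ancestral particle, and thereby to derive a renewal (Duhamel) equation for $v^\epsilon$ that can be identified with the mild form of~\eqref{special allen-cahn equation}. Since each individual branches at rate $1/\epsilon^2$, the time $\tau$ is exponentially distributed with parameter $1/\epsilon^2$, independent of the driving Brownian motion. On the event $\{\tau>t\}$, of probability $e^{-t/\epsilon^2}$, the tree $\mathcal{T}(\v{W}(t))$ is a single leaf sitting at $W_t$, a rate-$2$ Brownian motion started from $x$, and the root simply inherits the leaf's vote, which is $1$ with probability $p(W_t)$. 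On the event $\{\tau=s\le t\}$, the ancestor sits at $W_s$ and splits into three, and by the branching property the three subtrees are, conditionally on $W_s=y$, independent copies of $\v{W}(t-s)$ started from $y$; hence each subtree root votes $1$ with conditional probability $v^\epsilon(t-s,y)$, independently.

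The one genuinely model-specific computation is the law of the root vote given the three subtree votes: the majority of three independent $\mathrm{Bernoulli}(q)$ votes is $1$ with probability $g(q):=3q^2-2q^3$. Writing $P_t$ for the heat semigroup $e^{t\Delta}$ (the generator of rate-$2$ Brownian motion is $\Delta$ under the convention~\eqref{eq:bm_rate_2}), the first-step decomposition, after a change of variable in the branching time, yields the integral equation
\[
v^\epsilon(t,x)=e^{-t/\epsilon^2}\,P_t p(x)+\frac{1}{\epsilon^2}\int_0^t e^{-(t-s)/\epsilon^2}\,P_{t-s}\big[g(v^\epsilon(s,\cdot))\big](x)\,ds.
\]
The crucial algebraic point is the identity $g(q)-q=q(1-q)(2q-1)$, so that the displayed equation is precisely the Duhamel formula for
\[
\p_t v^\epsilon=\Delta v^\epsilon+\tfrac{1}{\epsilon^2}\big(g(v^\epsilon)-v^\epsilon\big)=\Delta v^\epsilon+\tfrac{1}{\epsilon^2}\,v^\epsilon(1-v^\epsilon)(2v^\epsilon-1),
\]
in which the linear part $\Delta-\epsilon^{-2}$ generates the discounted semigroup $e^{-t/\epsilon^2}P_t$ and $\epsilon^{-2}g(v^\epsilon)$ is the source term.

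It then remains to pass from this mild formulation back to a classical solution; note that we do not need uniqueness, only that the probabilistic formula solves the equation. Because $v^\epsilon$ takes values in $[0,1]$ and $g$ is a polynomial, hence smooth and Lipschitz on $[0,1]$, the smoothing of $P_t$ allows a bootstrap: for each fixed $t>0$ the right-hand side is smooth in $x$, and differentiating the Duhamel formula in $t$ (justified by dominated convergence together with standard heat-kernel gradient bounds) recovers the PDE in the classical sense on $t>0$. The initial condition is immediate from the probabilistic formula itself: at $t=0$ the tree is the single root leaf at $x$, so $v^\epsilon(0,x)=\P^\epsilon_x[\Vote_p(\v{W}(0))=1]=p(x)$.

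The main obstacle is the careful justification of the first-step decomposition at the level of the historical process: one must verify that conditioning on $\tau$ and on the branch location genuinely decouples the three subtrees as independent historical ternary branching Brownian motions, and that the recursive definition of $\Vote_p$ respects this decomposition, the root vote being the majority of the three subtree-root votes. Once this is in place, the measurability and the Fubini and dominated-convergence steps needed to write down the integral equation and subsequently differentiate it are routine, since all votes are $\{0,1\}$-valued and $v^\epsilon$ is uniformly bounded.
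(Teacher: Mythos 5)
Your proposal is correct, but it reaches the PDE by a genuinely different implementation of the first-step analysis than the paper uses. The paper's (sketch) proof is the McKean-style infinitesimal argument: it partitions on whether the first branching event occurs before a small time $\delta t$, expresses $v^\epsilon(t+\delta t,x)$ in terms of $v^\epsilon(t,\cdot)$ up to $\mathcal{O}(\delta t)$ errors, and lets $\delta t\downarrow 0$ to obtain the equation directly in differential form, assuming along the way that $v^\epsilon$ inherits enough regularity from the heat semigroup to justify the limit. You instead condition on the exact value of the first branching time over all of $[0,t]$, which yields the exact renewal identity
\[
v^\epsilon(t,x)=e^{-t/\epsilon^2}P_t p(x)+\frac{1}{\epsilon^2}\int_0^t e^{-(t-s)/\epsilon^2}P_{t-s}\big[g(v^\epsilon(s,\cdot))\big](x)\,ds,
\]
and then identify this with the Duhamel (mild) formulation of the equation. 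Both arguments share the essential probabilistic and algebraic content --- the branching property decoupling the three subtrees at the first branch point, the majority-vote calculation $g(q)=3q^2-2q^3$, and the identity $g(q)-q=q(1-q)(2q-1)$ --- but the two implementations buy different things. Your integral-equation route avoids all $\mathcal{O}(\delta t)$ bookkeeping and treats regularity more honestly: smoothness of $v^\epsilon$ for $t>0$ is obtained by bootstrap from the mild formulation rather than assumed, which is a genuine improvement given that the paper's proof is explicitly only a sketch on this point. The paper's route, in exchange, is shorter and lands on the strong form of the equation in one step, with no need to pass from mild to classical solutions. One point you should make explicit if you write your version in full: the integrand $P_{t-s}\big[g(v^\epsilon(s,\cdot))\big]$ loses smoothing as $s\uparrow t$, so spatial derivatives must be brought in through semigroup gradient bounds of the form $\|\nabla P_s f\|_\infty\leq Cs^{-1/2}\|f\|_\infty$, whose integrable singularity at $s=0$ is exactly what makes the bootstrap and the differentiation under the integral sign close; as you say this is routine, but it is where the actual work in your approach lies.
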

\begin{proof}(Sketch)

The proof mirrors that of the representation of solutions of
the Fisher-KPP equation in terms of binary branching Brownian motion,
and so we only sketch it.
As usual the idea is to analyse the expression on the right hand side
of~(\ref{eq:udef}) by partitioning on the behaviour of the branching Brownian
motion in the first $\delta t$ of
time and then to take a limit as $\delta t\downarrow 0$.

Throughout the proof we neglect the superscript $\epsilon$ in $\P^\epsilon _x$, 
$\E^\epsilon _x$ and $v^\epsilon $ and the subscript $p$ in $\Vote _p$.
We write $S$ for the time of the first branching event in the branching Brownian motion
and $W_S$ for 
the position of the ancestor at that time. It is convenient to use $E$ for
expectation when it is with respect to the law of Brownian motion ($W_\cdot$), 
preserving
$\E$ for expectation with respect to that of the historical branching Brownian 
motion ($\v{W}(\cdot)$). Let
$V_1, V_2, V_3$ denote the votes of the three offspring created at time $S$.
By the strong Markov property of the branching Brownian motion, and 
the branching property, we see that the $V_i$ are conditionally 
independent given $(S,W_S)$. Moreover, since conditional on $S\leq\delta t$,
the chance of a second 
branch before time $\delta t$ is ${\mathcal O}(\delta t)$, for $s\leq\delta t$,
$$\E_x[V_1|(S,W_S)=(s,y)]=E_{y}[v(t,W_{\delta t-s})]
+{\mathcal O}(\delta t).$$
From this, if we assume enough regularity of $v(t,x)$
(which follows from that of the heat semigroup), 
\begin{equation}
\label{approx v1}
\E_x[V_1|S\leq\delta t]= v(t,x)+{\mathcal O}(\delta t).
\end{equation}
Still conditioning on $S\leq\delta t$, in order for the vote at the root
to be one, at most one of $V_1, V_2, V_3$ can be zero, and so 
using~(\ref{approx v1}) 
and conditional independence of the $V_i$ given $(S,W_S)$,
$$\P_x\l[\Vote(\v{W}(t+\delta t))=1|S\leq\delta t\r]
=v(t,x)^3+3v(t,x)^2(1-v(t,x))+{\mathcal O}(\delta t).$$
Since if $S>\delta t$ the ancestor of the branching Brownian motion simply
follows a Brownian motion over $[0,\delta t]$, 
partitioning over the behaviour of the branching Brownian motion in 
the first $\delta t$ of time gives
\begin{eqnarray*}
v(t+\delta t,x)&=& 
\P_x\l[\Vote(\v{W}(t+\delta t))=1|S\leq\delta t\r] \P\l[S\leq\delta t\r]\\	
&&+\P_{x}\l[\Vote(\v{W}(t+\delta t))=1\,|\,S>\delta t\r]
(1-\P\l[S\leq\delta t\r])\\
&=&
\P_x\l[\Vote(\v{W}(t+\delta t))=1\,|\,S\leq\delta t\r] \P\l[S\leq\delta t\r]\\	
&&+E_x\l[\P_{W_{\delta t}}\l[\Vote(\v{W}(t))=1\r]\r]
(1-\P\l[S\leq\delta t\r])
.
\end{eqnarray*}
Now $\P[S\leq\delta t]=\epsilon^{-2}\delta t+{\mathcal O}(\delta t^2)$ and so
substituting and rearranging (and once again assuming enough regularity of 
$v(t,x)$) we obtain
\begin{eqnarray*}
\lim_{\delta t\rightarrow 0}\frac{v(t+\delta t,x)-v(t,x)}{\delta t}
&=& \epsilon^{-2}\left(v(t,x)^3+3v(t,x)^2(1-v(t,x))-v(t,x)\right)
\\
&&
+\lim_{\delta t\rightarrow 0}\frac{
E_x\l[\P_{W_{\delta t}}\l[\Vote(\v{W}(t))=1\r]\r]-v(t,x)}{\delta t}
\\
&=& \epsilon^{-2}\left(v(t,x)^3+3v(t,x)^2(1-v(t,x))-v(t,x)\right)
\\
&&
+\lim_{\delta t\rightarrow 0}\frac{
E_x\l[v(t, W_{\delta t})\r]-v(t,x)}{\delta t}
\\
&=&\Delta v(t,x)+\epsilon^{-2}v(t,x)(1-v(t,x))(2v(t,x)-1),
\end{eqnarray*}
as required.
\end{proof}

Armed with this representation, the proof of Theorem~\ref{theorem ac to cf} is 
reduced to proving the following result about our branching Brownian motions.
\begin{theorem} \label{thm:BBMtwo}
Suppose $p:\R^\dim \to [0,1]$ is such that {\Ca}-{\Cc} hold. 
Define $\mathscr{T}$, $d(x,t)$ as for Theorem~\ref{theorem ac to cf};
fix $T^*\in (0,\mathscr{T})$ and let $k\in\N$. 
There exist $\epsilon_{\dim}(k)>0$, and $a_{\dim}(k),c_{\dim}(k)\in(0,\infty)$ such that for all $\epsilon\in(0,\epsilon_{\dim})$ and $t$ satisfying $a_{\dim}\epsilon ^2 |\log \epsilon |\leq t\leq T^*,$
\begin{enumerate}
\item for $x$ such that $d(x,t)\geq c_{\dim}\epsilon |\log \epsilon|$, we have $\P^\epsilon_x\l[\Vote _p(\v{W}(t))=1\r]\geq 1-\epsilon^k$;
\item for $x$ such that $d(x,t)\leq -c_{\dim}\epsilon |\log \epsilon|$, we have $\P^\epsilon_x\l[\Vote _p (\v{W}(t))=1\r]\leq \epsilon^k$.
\end{enumerate}
\end{theorem}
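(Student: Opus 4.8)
The plan is to use the duality of Theorem~\ref{thm:ACdual} to recast the claim as a statement about the root vote $\Vote_p(\v W(t))$ of the ternary branching Brownian motion, and then to reduce this $\dim$-dimensional voting problem to a one-dimensional one by tracking, along each lineage, the signed distance to the moving interface. Following the two steps announced in the introduction, I would first show that an interface of width $\mc O(\epsilon|\log\epsilon|)$ is generated and then that it stays locked to $\v\Gamma_{t-s}$. For the coupling, set $Y_i(s)=d(W_i(s),t-s)$ along every lineage $i$ of $\v W(\cdot)$ started at $x$. Since all Brownian motions run at rate $2$ and $|\nabla d|=1$, It\^o's formula gives $dY_i=\nabla d\cdot dW_i+(\Delta d-\p_t d)\,ds$, where the martingale part is a one-dimensional Brownian motion at rate $2$; crucially, the identity $\p_t d=\Delta d$ on $\v\Gamma_{t-s}$ characteristic of mean curvature flow makes the drift vanish on the interface and of order $\mc O(|Y_i|)$ (and, to leading order, directed towards the interface) inside a tubular neighbourhood in which $d$ is smooth, of fixed width for $t\le T^*<\mathscr T$. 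Thus, up to a small drift, $(Y_i)$ is a one-dimensional branching Brownian motion carrying the genealogy of $\v W$, the interface sits at $0$ in these coordinates, and by {\Cb} and {\Cc} the leaf vote $p(W_i(t))$ agrees with $\1_{\{Y_i(t)>0\}}$ up to an error governed by $|Y_i(t)|\wedge r$, which is negligible unless the leaf lands within $\mc O(\epsilon)$ of the interface.

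For the generation step, over the short time $t_0=a_\dim\epsilon^2|\log\epsilon|$ a lineage branches of order $|\log\epsilon|$ times (the branching rate is $\epsilon^{-2}$) while diffusing only $\mc O(\epsilon\sqrt{|\log\epsilon|})$. Hence if $|d(x,t_0)|\ge c_\dim\epsilon|\log\epsilon|\gg\epsilon\sqrt{|\log\epsilon|}$, then with high probability the leaves descending from $x$ land on the correct side of the interface, and the recursive majority amplifies this bias: near the stable votes $0$ and $1$ the map $v\mapsto 3v^2-2v^3$ sends a minority probability $\delta$ to $\approx 3\delta^2$, so after the $\mc O(|\log\epsilon|)$ generations the probability of an incorrect root vote is driven below any fixed power $\epsilon^k$. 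I would make this quantitative by bounding the number of wrong-side leaves through a first-moment (many-to-one) computation and feeding it into the amplification estimate.

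The propagation step starts from the profile produced at time $t_0$, which is within a fixed power of $\epsilon$ of an indicator away from an $\mc O(\epsilon|\log\epsilon|)$ layer about $\v\Gamma_{t_0}$, and shows that the layer stays this narrow and centred on $\v\Gamma_{t-s}$ up to $T^*$. In the signed-distance coordinates the governing one-dimensional equation is $\p_s w=w_{zz}+\epsilon^{-2}w(1-w)(2w-1)$, whose speed-zero travelling wave is the explicit standing wave $\phi(z)=(1+e^{-z})^{-1}$ solving $\phi''+\phi(1-\phi)(2\phi-1)=0$, with exponential tails $1-\phi(z)\sim e^{-z}$ and $\phi(-z)\sim e^{-z}$. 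Two features make the scheme work: because the wells of the potential have equal depth this wave is \emph{stationary}, so in the frame $\v\Gamma_{t-s}$ the interface does not drift; and its exponential tails convert the threshold $d(x,t)=c_\dim\epsilon|\log\epsilon|$ into an error $e^{-c_\dim|\log\epsilon|}=\epsilon^{c_\dim}$, which forces $c_\dim$ to be taken proportional to $k$. To make this rigorous I would build one-dimensional sub- and super-solutions of the form $\phi((z\mp\eta_s)/\epsilon)$ for a slowly varying shift $\eta_s$, using a small perturbation of the potential to dominate the residual $\mc O(|Y|)$ drift from the curvature, and sandwich $v^\epsilon$ between them via the probabilistic comparison.

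The main obstacle is the propagation step and the uniform precision $\epsilon^k$ it demands. The delicate point is the balance, in the moving frame, between diffusion---which continually broadens the interface---and the recursive majority---which re-sharpens it; the proof must quantify this balance finely enough that both the $\mc O(\epsilon|\log\epsilon|)$ layer width and the $\epsilon^k$ error survive uniformly on $[t_0,T^*]$. This is compounded by the fact that the branching Brownian motion produces of order $e^{ct/\epsilon^2}$ particles, whose extremal ones travel distances $\mc O(t/\epsilon)$ far outside the tubular neighbourhood where the coupling $Y_s=d(W_s,t-s)$ is smooth, so one must control the contribution of these lineages to the root vote rather than merely their number. Constructing the one-dimensional sub- and super-solutions---in particular choosing the perturbation of the potential so that the resulting stationary wave genuinely bounds the $\dim$-dimensional vote despite the curvature drift---is the technical heart of the argument, and is where the approach departs from that of \cite{chen:1992}.
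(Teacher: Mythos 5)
Your first two ingredients are essentially the paper's own: the generation step (first\--moment/union bound showing all $\mathcal{O}(\epsilon^{-C})$ leaves stay within $\mathcal{O}(\epsilon|\log\epsilon|)$ of their start over a time $a_\dim\epsilon^2|\log\epsilon|$, followed by two\--phase amplification of the majority map $g(p)=3p^2-2p^3$) is precisely Lemmas~\ref{g_iteration}, \ref{ternary_tree}, \ref{all_particles_bound} and Proposition~\ref{prop:d=2_generation}, and your It\^o computation on $Y_i(s)=d(W_i(s),t-s)$, with the mean curvature identity $\dot d=\Delta d$ killing the drift on the interface and $|\nabla d|=1$ giving a rate\--$2$ one\--dimensional Brownian martingale part, is Proposition~\ref{prop:coupling1} (note the paper only proves the drift bound $|A_u|\le C_0\beta$ uniformly in the tube; your claim that the drift points towards the interface is neither proved nor needed). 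The genuine gap is in the propagation step, which is the heart of the theorem. Your coupling is valid only while a lineage remains in the tube $\{|d|\le c_0\}$ and only controls one lineage at a time; over macroscopic times $t\le T^*$ the tree contains $e^{2t/\epsilon^2}$ lineages, a non\--negligible fraction of which leave the tube. You flag this (``one must control the contribution of these lineages to the root vote'') but never resolve it, and the tool you reach for instead---sub\-- and super\--solutions built from the standing wave $\phi(z)=(1+e^{-z})^{-1}$ with a perturbed potential, ``sandwiched via the probabilistic comparison''---is left entirely unconstructed: there is no comparison principle for voting probabilities per se, so this route forces you back to the PDE (via Theorem~\ref{thm:ACdual}) and to verifying that the glued, curvature\--corrected standing\--wave profile is a genuine super/sub\--solution of the $\dim$\--dimensional equation both inside and outside the tube. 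That verification is exactly Chen's proof of Theorem~\ref{theorem ac to cf}, i.e.\ the known argument the paper set out to replace, and your sketch defers it (``I would build\dots'', ``the proof must quantify this balance'').

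The paper's resolution, absent from your proposal, is structurally different and avoids both difficulties at once. It never couples the whole tree over $[0,t]$: in Proposition~\ref{prop:contra} it conditions only on the \emph{first} branching event, at time $S\sim\mathrm{Exp}(\epsilon^{-2})$, during which the single ancestor lineage stays within $\mathcal{O}(\epsilon|\log\epsilon|)\ll c_0$ of its start with high probability, so Proposition~\ref{prop:coupling1} applies; far from the interface no coupling is needed because Theorem~\ref{thm:BBMone} and monotonicity~\eqref{eq:monotonicity} settle the vote outright (cases (i) and (ii) of Lemma~\ref{lem:keylemma_2}). The rest of the tree is handled not by coupling but by a minimal\--counterexample\--in\--time induction: one compares $\P^\epsilon_x[\Vote_p(\v W(t))=1]$ with the \emph{one\--dimensional step\--initial\--condition} voting probability at the shifted position $d(x,t)+K_1e^{K_2t}\epsilon|\log\epsilon|$, assumes a first time $T'$ at which the inequality fails by $\epsilon^l$, and derives a contradiction one branching generation back. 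Making this induction close requires two devices you do not have: the buffer $K_1e^{K_2t}\epsilon|\log\epsilon|$ must grow in $t$, and the per\--generation error $\epsilon^l$ (which would otherwise be amplified by $g'>1$ near $p=\tfrac12$) is absorbed by trading it against the buffer's growth using the quantitative lower bound on the slope of the one\--dimensional interface, Corollary~\ref{lem:high_deriv} (itself resting on the convexity\--type estimate of Proposition~\ref{prop:deriv1}). Without either this induction\--plus\--slope\--bound mechanism or a completed Chen\--style construction, your argument does not get past time $\mathcal{O}(\epsilon^2|\log\epsilon|)$.
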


The proof of Theorem~\ref{thm:BBMtwo} will proceed in two steps. 
First, in Section~\ref{sec:BBMone}, 
we prove a one-dimensional analogue of the result in 
the special case in which $p(x)=\1\{x\geq 0\}$. The proof rests on symmetry
of branching Brownian motion and the monotonicity that results from
the specific choice of initial condition $p$.
The second step uses the definition of mean curvature flow and the regularity properties
that follow from the conditions {\Ca}-{\Cc}. These allow us to couple 
the distance between the (backwards in time) mean curvature flow 
$(\v {\Gamma}_{t-s})_{s \in [0,t]}$ and a 
(forwards in time) $\dim$-dimensional Brownian motion $W$ with a 
(forwards in time) one-dimensional Brownian motion $B$ in such a way that
$d(W_s,t-s)$ is well approximated by $B_s$
when $W_s$ is close to $\v {\Gamma}_{t-s}$.
This coupling is made precise in Proposition~\ref{prop:coupling1} in Section~\ref{sec:CFsec}. 
The proof of Theorem~\ref{thm:BBMtwo}, which combines these two steps by
bounding the errors that occur far from the interface $\v {\Gamma} _{t-s}$, 
can be found in Section~\ref{sec:BBMtwo}. 

\begin{notation}
It is convenient to have a prominent distinction between one dimensional
and multi-dimensional Brownian motion in our notation. We therefore adopt the
convention that $B$ will denote one dimensional Brownian motion and
$\v{B}$ will represent the corresponding historical branching Brownian motion
and we preserve $W$ and $\v{W}$ for dimensions $\dim\geq 2$.
\end{notation}

\subsection{Majority voting in one dimensional BBM}
\label{sec:BBMone}

In this section we consider only ternary branching Brownian motion 
in dimension $\dim=1$. 

As in Section~\ref{subsec:dual_bbm}, 
for $x\in \R$, we write $\P ^\epsilon _x$ for the probability measure 
under which $(\v{B}(t),t \geq 0)$ has the law of historical ternary branching
Brownian motion in $\R$ with branching rate $1/\epsilon ^2$ started from a 
single particle at location $x$ at time $0$,
and $\E ^\epsilon _x$ for the corresponding expectation.
We also write $P_x$ for the probability measure under which 
$(B_t)_{t \geq 0}$ has the law of a Brownian motion started at $x$,
and $E_x$ for the corresponding expectation.

Throughout this section we write $\mathbb V := \mathbb V_{p_0}$ where
$p_0(x)=\1\{x\geq 0\}$, so that
a leaf votes $1$ if and only if it is in the right half line.
Our aim is to prove the following one-dimensional analogue of 
Theorem~\ref{thm:BBMtwo} for this initial condition $p_0$.

\begin{theorem} \label{thm:BBMone}
Let $T^*\in(0,\infty)$. For all $k\in\N$ there exist $c_1(k)$ and $\epsilon_1(k)>0$ such that, for all $t\in[0,T^*]$ and all $\epsilon\in(0,\epsilon_1)$,
\begin{enumerate}
\item for $z\geq c_1(k)\epsilon |\log \epsilon |$, we have $\P^\epsilon_z \l[ \Vote (\v{B}(t))=1\r]\geq 1-\epsilon ^k$ 
\item for $z\leq -c_1(k)\epsilon |\log \epsilon |$, we have $\P^\epsilon_z \l[ \Vote (\v{B}(t))=1\r]\leq \epsilon ^k.$ 
\end{enumerate}
\end{theorem}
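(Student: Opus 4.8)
The plan is to work with the probabilistic representation of Theorem~\ref{thm:ACdual}, specialised to $\dim=1$ and $p_0=\1\{x\ge0\}$: writing $v^\epsilon(t,x)=\P^\epsilon_x[\Vote(\v B(t))=1]$, this is the solution of $\partial_t v^\epsilon=\partial_{xx}v^\epsilon+\epsilon^{-2}v^\epsilon(1-v^\epsilon)(2v^\epsilon-1)$ with $v^\epsilon(0,\cdot)=p_0$. The first observation is that statement (2) follows from statement (1) by symmetry. Reflecting every Brownian path in the historical tree through the origin turns a realisation of $\v B$ started at $z$ into one started at $-z$ with the same law, and simultaneously sends a leaf at $y$ to $-y$, so that (almost surely, since no leaf sits exactly at $0$) its vote flips from $\1\{y\ge0\}$ to $\1\{y<0\}=1-\1\{y\ge0\}$. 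Because the ternary majority satisfies $\mathrm{maj}(1-a,1-b,1-c)=1-\mathrm{maj}(a,b,c)$, flipping every leaf vote flips the root vote, whence $\P^\epsilon_z[\Vote(\v B(t))=1]=1-\P^\epsilon_{-z}[\Vote(\v B(t))=1]$. In particular $v^\epsilon(t,0)=\tfrac12$ for every $t$, and (2) at $z$ is exactly (1) at $-z$.

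Next I would record the spatial monotonicity of $v^\epsilon$ by a coupling. Run two copies of the historical BBM from $z<z'$ using the same branching tree and the same driving Brownian increments, so that every particle of the higher copy sits exactly $z'-z$ to the right of its counterpart. Since $p_0$ is a nondecreasing indicator the leaf votes are then ordered deterministically, and as majority voting is monotone this ordering propagates inwards to the root. Hence $x\mapsto v^\epsilon(t,x)$ is nondecreasing; together with the antisymmetry $v^\epsilon(t,x)=1-v^\epsilon(t,-x)$ it increases from $0$ to $1$ through the value $\tfrac12$ at the origin.

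The crux is the rapid rise of $v^\epsilon$ to the right of the origin, and here I would use the explicit standing wave. The equation admits the stationary solution $\phi(x/\epsilon)$ with $\phi(\xi)=\tfrac12(1+\tanh(\xi/2))$, the increasing heteroclinic connecting $0$ to $1$, which solves $\phi''=-\phi(1-\phi)(2\phi-1)$ and satisfies $\phi(0)=\tfrac12$ and $1-\phi(\xi)=(1+e^{\xi})^{-1}\le e^{-\xi}$. I compare $v^\epsilon$ with $\phi(\cdot/\epsilon)$ on the region $\{t\in[0,T^*],\,x\ge0\}$. On the parabolic boundary $v^\epsilon\ge\phi(\cdot/\epsilon)$: on $\{t=0,x>0\}$ we have $v^\epsilon=1\ge\phi(x/\epsilon)$, while on $\{x=0\}$ we have $v^\epsilon(t,0)=\tfrac12=\phi(0)$ by the symmetry step. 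Since $\phi(\cdot/\epsilon)$ is an exact, hence sub-, solution, the parabolic comparison principle for bounded solutions gives $v^\epsilon(t,x)\ge\phi(x/\epsilon)$ for all $t\in[0,T^*]$, $x\ge0$. Consequently, for $x\ge c_1\epsilon|\log\epsilon|$,
\[
1-v^\epsilon(t,x)\le 1-\phi(x/\epsilon)\le e^{-x/\epsilon}\le e^{-c_1|\log\epsilon|}=\epsilon^{c_1},
\]
so taking $c_1(k)=k$ (and $\epsilon_1(k)$ small enough that the comparison applies) yields $v^\epsilon(t,x)\ge1-\epsilon^k$, which is (1); statement (2) then follows by the reflection identity. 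The scale $\epsilon|\log\epsilon|$ and the threshold $c_1\sim k$ emerge precisely from the exponential decay rate of $1-\phi$, and the bound is uniform in $t\in[0,T^*]$.

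The main obstacle is the comparison step rather than the symmetry or monotonicity. Because the initial datum is a step, $v^\epsilon$ is only smooth for $t>0$ and merely bounded near the corner $(0,0)$, so I must justify the maximum principle up to this corner and on an unbounded strip, where compactness is unavailable. The standard remedy is to compare with $\phi(\cdot/\epsilon)-\delta$ (a strict subsolution away from the wells) together with a vanishing correction dominating any growth at spatial infinity, and then let $\delta\downarrow0$, checking along the way that $v^\epsilon(t,0)\to\tfrac12$ as $t\downarrow0$. A more probabilistic route to the same inequality—closer to the spirit of the BBM duality—would construct a reflection coupling of the $p_0$-voting tree with the $\phi(\cdot/\epsilon)$-voting tree, exploiting the antisymmetry $\phi(-\xi)=1-\phi(\xi)$ to pair leaves that wander below $0$ with their reflections; making this coupling consistent through the branch points is the delicate point on that route.
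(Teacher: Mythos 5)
Your proof is correct, but it takes a genuinely different route from the paper's. You share the two structural ingredients --- the reflection identity $\P^\epsilon_z\l[\Vote(\v B(t))=1\r]=1-\P^\epsilon_{-z}\l[\Vote(\v B(t))=1\r]$ (hence the exact value $\tfrac12$ at the origin for all $t>0$) and spatial monotonicity --- but where you then pass through Theorem~\ref{thm:ACdual} to the PDE and compare $v^\epsilon$ on the quarter-plane $\{t>0,\,x>0\}$ with the explicit standing wave $\phi(x/\epsilon)$, $\phi(\xi)=(1+e^{-\xi})^{-1}$, the paper stays entirely probabilistic: it shows that majority voting can never decrease the bias of a single Brownian particle (Lemma~\ref{no_demagnification}, proved by induction on the tree using the same reflection trick), that the BBM genealogy contains a regular ternary tree of depth $A(k)|\log\epsilon|$ by time $a_1\epsilon^2|\log\epsilon|$ (Lemma~\ref{ternary_tree}, via Cram\'er's theorem), that particles move at most $\mc O(\epsilon|\log\epsilon|)$ in that time (Lemma~\ref{all_particles_bound}), and that $\mc O(|\log\epsilon|)$ rounds of independent majority voting amplify a bias of $\epsilon$ into $1-\epsilon^k$ (Lemma~\ref{g_iteration}). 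Your route is shorter, gives the essentially sharp explicit constant $c_1(k)=k$, and yields a bound uniform over all $t\geq 0$ (the paper needs $t\leq T^*$ because its single-particle bias $P_z[B_t\geq 0]$ decays in $t$). What it costs: it leans on Theorem~\ref{thm:ACdual} applied to the discontinuous datum $p_0$ (a result the paper only sketches, under regularity assumptions), and on comparison-principle technology for bounded solutions on an unbounded domain with step initial data --- standard, and your treatment of the corner via monotonicity is the right one, but not elementary. More importantly for the paper's purposes, a PDE comparison argument is precisely what the authors set out to avoid: their probabilistic lemmas (\ref{no_demagnification}, \ref{g_iteration}, \ref{ternary_tree}, and the slope bound of Corollary~\ref{lem:high_deriv} built on Proposition~\ref{prop:deriv1}) are reused in the proof of Theorem~\ref{thm:BBMtwo} and, crucially, transfer to the SLFVS dual (Theorem~\ref{thm:slfvs_dual}), where lineages are coalescing jump processes and no comparison principle is available.

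One small slip in your technical remarks: with $f(v)=v(1-v)(2v-1)$ one has $f'(\tfrac12)=\tfrac12>0$ while $f'(0)=f'(1)=-1$, so $\phi(\cdot/\epsilon)-\delta$ is a subsolution \emph{near} the wells $0$ and $1$ and fails to be one near $\tfrac12$ --- the opposite of what you wrote. This does not affect your argument: since you compare two exact bounded solutions, the standard Phragm\'en--Lindel\"of device (set $w=v^\epsilon-\phi(\cdot/\epsilon)$, which satisfies a linear parabolic equation with zeroth-order coefficient bounded by $\epsilon^{-2}\sup_{[0,1]}|f'|$, and add a barrier of the form $\eta\, e^{Ct}(1+x^2)$ before letting $\eta\downarrow 0$) closes the comparison without any perturbed subsolution.
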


\begin{remark}
The subscript $1$ on $a_1,c_1$ and $\epsilon_1$ is to emphasize that
Theorem~\ref{thm:BBMone} applies in dimension $1$. We shall 
often suppress the dependence on $k$ in our notation.
\end{remark}

Note that, if $z\geq 0$, then a typical leaf of the branching Brownian motion is more 
likely to vote $1$ than $0$, and that the opposite is true for $z<0$. 
Theorem~\ref{thm:BBMone} says that the majority 
voting procedure magnifies a small voting bias at the leaves 
into a much stronger voting bias at the root. If the votes of different leaves
were independent this would be elementary, but the spatial
structure of the branching Brownian motion introduces strong correlations
between votes of closely related individuals. To overcome this, we first 
use a symmetry argument to show that the bias
close to the root will be at least as strong as that at the leaves 
and then check that, 
as $\epsilon$ tends to zero, there
is enough branching close to the root to sufficiently magnify the bias.

\subsubsection{Proof of Theorem~\ref{thm:BBMone}}

First note that
with our
special choice of initial condition $p_0$, for any 
$x_1 \leq x_2\in\R$, 
\begin{equation}\label{eq:monotonicity}
\P^\epsilon_{x_1}[\Vote(\v{B}(t))=1]\leq \P^\epsilon_{x_2}[\Vote(\v{B}(t))=1].
\end{equation}

By analogy with the previous subsection, we use
$\mathcal{T}(\v{B}(t))$ to denote the  
time-labelled tree traced out by the branching Brownian motion
up to time $t$, 
and for any time-labelled ternary tree $\mc{T}$ we write
\begin{equation}\label{PTdef}
\P^{t}_x (\mc{T})=\P^\epsilon _x\l[\Vote (\v{B}(t))=1
\|\mathcal{T}(\v{B}(t))=\mc{T}\r].
\end{equation}
By the symmetry of the Brownian motions followed by individuals in 
$\v{B}(t)$ conditional on $\{\mathcal{T}(\v{B}(t))=\mc{T}\}$, 
applying the reflection $x\mapsto -x$ to the process, we see that 
for any time-labelled ternary tree $\mc{T}$, any time $t>0$, and any $z\in \R$,
\begin{equation}
	\label{prob_symmetry}
	\P_{z}^t(\mc{T})=1-\P_{-z}^t(\mc{T}).
\end{equation}
The monotonicity in \eqref{eq:monotonicity} and the symmetry in \eqref{prob_symmetry} are key to our proof of Theorem \ref{thm:BBMone}.

Taking $z=0$ in~(\ref{prob_symmetry}) shows that $\P^t_0(\mc{T})=\frac{1}{2}$ 
for all $t>0$, and, by \eqref{eq:monotonicity}, for all $t>0$ and all time-labelled 
ternary trees $\mc T$ we have
$$
\P^t_z(\mc{T})\geq\tfrac{1}{2}\;\text{ for } z>0;\qquad 
\P^t_z(\mc{T})\leq\tfrac{1}{2}\;\text{ for } z<0. 
$$

We now introduce notation for the majority voting procedure. 
Let $g:[0,1]^3\to[0,1]$ be given by
\begin{equation} \label{g_defn}
g(p_1,p_2,p_3)=p_1 p_2 p_3 +p_1 p_2 (1-p_3)+p_2 p_3(1-p_1)+p_3 p_1 (1-p_2). 
\end{equation}
This is the probability that a majority vote gives the result $1$, 
in the special case where the three voters are independent and have 
probabilities $p_1$, $p_2$ and $p_3$ respectively of voting $1$. 
With a slight abuse of notation, we let $g(p)=g(p,p,p)$, for $p\in [0,1]$. 
Note that
\begin{equation}\label{gantisym}
g(1-p_1,1-p_2,1-p_3)=1-g(p_1,p_2,p_3).
\end{equation}

For $\mc T$ a time-labelled ternary tree with at least one branching event,
suppose that the time to the 
first branching event in $\mc T$ is $\tau$ and that the subtrees with time 
labels corresponding to the (descendants of the) three offspring from the 
branching event are $\mc T_1$, $\mc T_2$ and $\mc T_3$ (here a vertex 
$v$ with time label $t_v$ in $\mc T$ is given time label $t_v-\tau$ 
in $\mc T_i$).
Then, we write
\begin{equation}\label{eq:star_notation}
g\l(\P^{t-\tau}_{B_{\tau}}(\mc{T}\star)\r)=g\l(\P^{t-\tau}_{B_{\tau}}(\mc{T}_1), 
\P^{t-\tau}_{B_{\tau}}(\mc{T}_2),\P^{t-\tau}_{B_{\tau}}(\mc{T}_3)\r)
\end{equation}
and the identity
\begin{equation}\label{branchid}
\P^t_{z}(\mc{T})=E_z \l[g\l(\P^{t-\tau}_{B_{\tau}}(\mc{T}\star)\r)\r]
\end{equation}
expresses the majority voting that takes place at the 
first branch of $\mc{T}$.

Our next lemma states that the majority voting procedure cannot reduce the voting bias. In view of symmetry \eqref{prob_symmetry}, when it is convenient to do so we will only state such results for the case $z\geq 0$.

\begin{lemma} \label{no_demagnification}
For any time-labelled ternary tree $\mc T$, any time $t>0$, and any $z\geq 0$,
$$\P_z^t(\mc T)\geq P_z[B_t\geq 0].$$
\end{lemma}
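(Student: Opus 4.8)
The plan is to induct on the number of branch points of $\mc T$, peeling off the first branch via the recursion \eqref{branchid}. Write $\phi(y):=P_y[B_{t-\tau}\geq 0]$, where $\tau$ is the time of the first branch of $\mc T$ (deterministic, since $\mc T$ is time-labelled). By the Markov property of $B$ at time $\tau$, the comparison quantity factorises exactly as in \eqref{branchid}: we have $P_z[B_t\geq 0]=E_z[\phi(B_\tau)]$ alongside $\P^t_z(\mc T)=E_z\big[g(\P^{t-\tau}_{B_\tau}(\mc T\star))\big]$, and both expectations are over the same $N(z,2\tau)$ law of $B_\tau$. For the base case, the trivial tree (a single leaf, no branch point) gives $\P^t_z(\mc T)=P_z[B_t\geq 0]$ directly from the definition of $\Vote$, so equality holds.

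For the inductive step, set $q_i(y):=\P^{t-\tau}_y(\mc T_i)$, where $\mc T_1,\mc T_2,\mc T_3$ are the subtrees hanging off the first branch, each with strictly fewer branch points than $\mc T$. The inductive hypothesis gives $q_i(y)\geq\phi(y)$ for $y\geq 0$; combining it with the reflection symmetry \eqref{prob_symmetry} (applied to $\mc T_i$) and the matching symmetry $\phi(-y)=1-\phi(y)$ of Brownian motion yields $q_i(y)\leq\phi(y)$ for $y<0$. The function $g$ of \eqref{g_defn} is nondecreasing in each argument (one computes $g=p_1p_2+p_2p_3+p_3p_1-2p_1p_2p_3$, so $\partial_{p_1}g=p_2(1-p_3)+p_3(1-p_2)\geq 0$), and the identity $g(p)-p=p(1-p)(2p-1)$ shows $g(p)\geq p$ for $p\geq\tfrac12$ and $g(p)\leq p$ for $p\leq\tfrac12$. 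Since $\phi(y)\geq\tfrac12$ exactly when $y\geq 0$, monotonicity of $g$ together with these facts gives, pointwise,
\[
h(y):=g\big(q_1(y),q_2(y),q_3(y)\big)-\phi(y)\quad\text{satisfies}\quad h(y)\geq 0\ \text{ for }y\geq 0,\ \ h(y)\leq 0\ \text{ for }y<0.
\]

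This is not yet enough, since $E_z[h(B_\tau)]$ integrates $h$ against the $N(z,2\tau)$ law, which charges the negative axis where $h\leq 0$; overcoming this is the main point. The resolution is that $h$ is odd: \eqref{prob_symmetry} gives $q_i(-y)=1-q_i(y)$ and $\phi(-y)=1-\phi(y)$, and the antisymmetry \eqref{gantisym} of $g$ then forces $g(q_1(-y),q_2(-y),q_3(-y))=1-g(q_1(y),q_2(y),q_3(y))$, whence $h(-y)=-h(y)$. Folding the integral over the two half-lines, for $z\geq 0$,
\[
\P^t_z(\mc T)-P_z[B_t\geq 0]=E_z[h(B_\tau)]=\int_0^\infty h(y)\,\big(\rho_z(y)-\rho_z(-y)\big)\,dy,
\]
where $\rho_z$ is the $N(z,2\tau)$ density. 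For $y>0$ and $z\geq 0$ one has $(y-z)^2\leq(y+z)^2$, hence $\rho_z(y)\geq\rho_z(-y)$; since also $h(y)\geq 0$ on $(0,\infty)$, the integrand is nonnegative and the difference is $\geq 0$, closing the induction.

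The only genuine obstacle is that the branch-level comparison is \emph{not} pointwise in the favourable direction on the negative half-line; everything else is bookkeeping for the monotonicity and sign behaviour of $g$, together with the reflection symmetries already recorded in \eqref{prob_symmetry} and \eqref{gantisym}. The crucial structural input is that majority voting preserves the sign of the bias — captured by $g(p)-p=p(1-p)(2p-1)$ — while a starting point $z\geq 0$ biases $B_\tau$ towards the positive axis; the oddness of $h$ is exactly what aligns these two effects so that the symmetrised integral is nonnegative.
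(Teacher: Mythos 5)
Your proof is correct, and it rests on the same three pillars as the paper's: induction on the number of branch points via the recursion \eqref{branchid}, the reflection symmetries \eqref{prob_symmetry} and \eqref{gantisym}, and the folding of the Gaussian integral over the two half-lines against the density difference $\phi_{z,2\tau}(y)-\phi_{z,2\tau}(-y)\geq 0$. Where you differ is in the decomposition at the key step. The paper splits $g$ as $g(p_1,p_2,p_3)=h(p_1,p_2,p_3)+\tfrac13(p_1+p_2+p_3)$ and treats the two pieces separately: the centred-majority term $h$ is shown nonnegative in mean by the flip trick (using only that $\P^{t-\tau}_x(\mc{T}_i)\geq\tfrac12$ for $x\geq 0$, no induction needed), while the inductive hypothesis enters only through the average term $\tfrac13\sum_i E_z\l[\P^{t-\tau}_{B_\tau}(\mc{T}_i)\r]$. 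You instead prove a single pointwise inequality $g(q_1(y),q_2(y),q_3(y))\geq\phi(y)$ for $y\geq 0$, feeding the inductive hypothesis through the coordinatewise monotonicity of $g$ and the bias-preservation identity $g(p)-p=p(1-p)(2p-1)$, and then fold once. Your organisation has a genuine expository advantage: in the paper, the step $E_z\l[\P^{t-\tau}_{B_\tau}(\mc{T}_i)\r]\geq E_z\l[P_{B_\tau}[B_{t-\tau}\geq 0]\r]$ does \emph{not} follow pointwise from the inductive hypothesis, since that hypothesis covers only nonnegative starting points and the inequality reverses on the negative half-line (by \eqref{prob_symmetry}); making it rigorous requires the same oddness-plus-folding argument a second time, which the paper leaves implicit. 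By folding the combined difference $h(y)=g(q(y))-\phi(y)$, whose oddness you verify directly, you handle everything in one pass and the issue never arises. What the paper's split buys in return is the isolation of the purely algebraic fact that $h\geq 0$ on $[\tfrac12,1]^3$, a computation whose flip-trick treatment is echoed later in the proof of Proposition~\ref{prop:deriv1}; but as a self-contained proof of Lemma~\ref{no_demagnification}, yours is tighter.
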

\begin{proof}
The proof is by induction on the number of branching events in the tree $\mc{T}$. Let $\mc{T}_0$ denote the tree with a root and a single leaf. Then, by definition, 
$
\P_z^t(\mc{T}_0)=P_z\l[B_t\geq 0\r].
$

We now approach the inductive step. Suppose that the statement of the lemma holds for all time-labelled ternary trees with up to $n$ internal vertices. We define $h:[0,1]^3\to\R$ by
$$h(p_1,p_2,p_3)=g(p_1,p_2,p_3)-\frac{1}{3}(p_1+p_2+p_3),$$
and note that from~\eqref{gantisym} we have
\begin{equation}\label{hantisym}
h(1-p_1,1-p_2,1-p_3)=-h(p_1,p_2,p_3).
\end{equation}

We can write $h$ in the form
\begin{equation*}
h(p_1,p_2,p_3)=\tfrac{1}{3}\sum p_{i_1}\Big((1-p_{i_2})(p_{i_3}-\tfrac{1}{2})+(1-p_{i_3})(p_{i_2}-\tfrac{1}{2}) \Big) 
\end{equation*}
where the sum is over $(i_1,i_2,i_3)=(1,2,3),(2,3,1),(3,1,2)$. Hence 
\begin{equation} \label{hpos}
\frac{1}{2}\leq p_1,p_2,p_3\leq 1\;\ra\;h(p_1,p_2,p_3)\geq 0.
\end{equation}
We will use the $\star$ notation defined in~\eqref{eq:star_notation} for $h$ in the same way as we use it for $g$. 

Suppose that $\mc{T}$ is a time-labelled ternary tree with $n+1$ internal 
vertices and let $\tau$, $\mc{T}_1$, $\mc{T}_2$, $\mc{T}_3$ be as 
in~\eqref{branchid}. Using~\eqref{branchid}, by the definition of $g$ and 
$h$ we have
\begin{align}
\P^t_{z}(\mc{T})
&=E_z\l[g\l(\P^{t-\tau}_{B_{\tau}}(\mc{T}\star)\r)\r] \notag \\
&=E_z\l[h\l(\P^{t-\tau}_{B_{\tau}}(\mc{T}\star)\r)\r]
+\frac{1}{3}\sum\limits_{i=1}^3 E_z\l[\P^{t-\tau}_{B_{\tau}}(\mc{T}_i)\r]. 
\label{hbreakdown}
\end{align}
We will show that the first term of \eqref{hbreakdown} is non-negative. 
Combining~(\ref{hantisym}) with~(\ref{prob_symmetry}),
$$h(\P^{t-\tau}_{B_{\tau}}(\mc{T}\star))
=-h(\P^{t-\tau}_{-B_{\tau}}(\mc{T}\star)).
$$
Hence,
\begin{align}
E_z[h(&\P^{t-\tau}_{B_{\tau}}(\mc T\star))]\notag\\
&=E_z \left[h(\P^{t-\tau}_{B_{\tau}}(\mc T\star))
\1\left\{B_{\tau}\geq 0\right\}\right]
+E_z\left[h(\P^{t-\tau}_{B_{\tau}}(\mc T\star))
\1\left\{B_{\tau}<0\right\}\right]\notag\\
&=E_z\left[h(\P^{t-\tau}_{B_{\tau}}(\mc T\star))
\1\left\{B_{\tau}\geq 0\right\}\right]
-E_z\left[h(\P^{t-\tau}_{-B_{\tau}}(\mc T\star))
\1\left\{B_{\tau}<0\right\}\right]\notag\\
&=\int_{0}^\infty h(\P^{t-\tau}_{x}(\mc T\star))(\phi_{z,2\tau}(x)
-\phi_{z,2\tau}(-x))\,dx,\label{eq:fliptrick}
\end{align}
where $\phi_{\mu,\sigma^2}$ denotes the density of a $N(\mu,\sigma^2)$ 
random variable. Since $\P^{t-\tau}_{x}(\mc{T}_i)\geq 1/2$ for $x\geq 0$,
by \eqref{hpos} we have $h(\P^{t-\delta t}_{x}(\mc{T}\star))\geq 0$, and since $z\geq 0$, 
for all $x\geq 0$ we have
$$\phi_{z,2\tau}(x)-\phi_{z,2\tau}(-x)\geq 0,$$
which proves that~\eqref{eq:fliptrick} is non-negative. This shows that the first term of \eqref{hbreakdown} is non-negative and we now move on to the second term.

Using our inductive hypothesis, for $i=1,2,3$,
$$E_z[\P^{t-\tau}_{B_{\tau}}(\mc{T}_i)]\geq E_z\l[P_{B_\tau}\l[B_{t-\tau}\geq0\r]\r]
=P_z[B_t\geq 0]$$
and so substituting into~\eqref{hbreakdown}
completes the proof of
Lemma~\ref{no_demagnification}.
\end{proof}

Our next task is to show that successive rounds of majority voting magnify 
a small bias at the leaves into a large bias at the root of a tree.
Recall that for $p\in [0,1]$,
$$g(p):=g(p,p,p)=3p^2-2p^3,$$ 
and define $g^{(n)}(p)$, inductively, by
\begin{equation*}
g^{(1)}(p)=g(p), \qquad
g^{(n+1)}(p)=g^{(n)}(g(p)).
\end{equation*}
Thus, $g^{(n)}(p)$ describes the probability of voting $1$ at the root 
of an $n$-level regular ternary tree if the votes of the leaves are 
i.i.d.~Bernoulli$(p)$.
\begin{lemma} \label{g_iteration}
For all $k\in\N$ there exists $A(k)<\infty$ such that, for all $\epsilon \in (0,\frac12]$ and $n\geq A(k)|\log \epsilon |$ we have
$$g^{(n)}(\tfrac{1}{2}+\epsilon)\geq 1-\epsilon^k.$$
\end{lemma}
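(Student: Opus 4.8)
The plan is to study the real one-dimensional dynamical system $p\mapsto g(p)=3p^2-2p^3$ and to locate $g^{(n)}(\tfrac12+\epsilon)$ relative to its fixed points. Solving $g(p)=p$ gives the three fixed points $0,\tfrac12,1$, and since $g'(p)=6p(1-p)$ we have $g'(\tfrac12)=\tfrac32>1$ while $g'(1)=0$: thus $\tfrac12$ is repelling and $1$ is super-attracting. Two facts organise the argument. First, $g$ is increasing on $[0,1]$ and $g(p)-p=p(2p-1)(1-p)\ge0$ on $[\tfrac12,1]$, so the orbit $p_n:=g^{(n)}(\tfrac12+\epsilon)$ stays in $[\tfrac12,1]$ and is nondecreasing in $n$; consequently it suffices to exhibit a single $n^*\le A(k)|\log\epsilon|$ with $p_{n^*}\ge1-\epsilon^k$, after which monotonicity gives $p_n\ge1-\epsilon^k$ for all $n\ge n^*$. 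Second, the relation \eqref{gantisym} with equal arguments reads $g(1-q)=1-g(q)$, which I will use to transfer the analysis near $1$ to an analysis near $0$.

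For the escape from $\tfrac12$, I would write $x_n:=p_n-\tfrac12$ and compute $g(\tfrac12+x)=\tfrac12+\tfrac32x-2x^3$, so that $x_{n+1}=x_n(\tfrac32-2x_n^2)$. While $x_n\le\tfrac14$ we have $\tfrac32-2x_n^2\ge\tfrac{11}{8}$, hence $x_{n+1}\ge\tfrac{11}{8}x_n$ and the bias grows at least geometrically from $x_0=\epsilon$. Let $n_1$ be the first time $x_{n_1}\ge\tfrac14$; comparing $x_{n_1-1}\ge\epsilon(\tfrac{11}{8})^{n_1-1}$ with $x_{n_1-1}<\tfrac14$ bounds $n_1\le C_1|\log\epsilon|$ uniformly for $\epsilon\in(0,\tfrac12]$ (when $\epsilon$ is already large the bound is vacuous and $n_1$ is small). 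At this point $p_{n_1}\ge\tfrac34$, i.e.\ $q_{n_1}:=1-p_{n_1}\le\tfrac14$.

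For the final approach to $1$, the symmetry $g(1-q)=1-g(q)$ gives $q_{n+1}=g(q_n)=q_n^2(3-2q_n)\le3q_n^2$. Writing $r_n:=3q_n$ turns this into $r_{n+1}\le r_n^2$, so from $r_{n_1}\le\tfrac34<1$ one gets the doubly-exponential decay $r_{n_1+m}\le(\tfrac34)^{2^m}$; requiring $(\tfrac34)^{2^m}\le\epsilon^k$ forces only $m\le C_2\log\!\big(k|\log\epsilon|\big)$ steps. Since $\log(k|\log\epsilon|)/|\log\epsilon|$ is bounded on $(0,\tfrac12]$, the total $n^*=n_1+m$ satisfies $n^*\le A(k)|\log\epsilon|$ for a suitable $A(k)$, and $q_{n^*}\le\epsilon^k$ gives $p_{n^*}\ge1-\epsilon^k$; monotonicity then finishes the proof.

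The main obstacle is the escape phase rather than the convergence phase: because $g$ is only weakly expanding at $\tfrac12$ (ratio $\tfrac32$), leaving a neighbourhood of the repelling fixed point is the slow part and is exactly what fixes the $|\log\epsilon|$ timescale, so the per-step growth must be lower-bounded by a constant $>1$ uniformly while the bias is below a fixed threshold. By contrast, the quadratic attraction at $1$ makes the convergence phase cost only $O(\log(k|\log\epsilon|))$ steps; the one point demanding care is to verify that this doubly-logarithmic contribution, together with the $k$-dependence, is dominated by $|\log\epsilon|$ uniformly over the whole range $\epsilon\in(0,\tfrac12]$ and not merely as $\epsilon\downarrow0$.
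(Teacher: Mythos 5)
Your proposal is correct and takes essentially the same route as the paper's proof: a first phase in which the bias away from the repelling fixed point $\tfrac12$ grows geometrically (your factor $\tfrac{11}{8}$ below threshold $\tfrac14$ versus the paper's $\tfrac54$ below $\tfrac{1}{\sqrt 8}$), taking $\mc O(|\log\epsilon|)$ steps, followed by a second phase exploiting the quadratic bound $1-g(1-\delta)\leq 3\delta^2$ to get doubly-exponential convergence to $1$ in $\mc O(\log(k|\log\epsilon|))$ further steps, with monotonicity of $g$ (and of the orbit) tying the phases together. The dynamical-systems framing and explicit fixed-point analysis are presentational differences only.
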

\begin{proof}
We carry out two phases of iteration of $g$. 
First, we will show that it takes $\mc{O}(|\log \epsilon|)$ iterations 
to obtain 
\begin{equation}\label{eq:g_iter_1}
g^{(n)}(\tfrac{1}{2}+\epsilon)\geq \tfrac{1}{2}+\tfrac{1}{\sqrt{8}}.
\end{equation}
Then we note that $\mc{O}(\log |k \log \epsilon|)$ iterations are 
required to obtain 
\begin{equation}\label{eq:g_iter_2}
g^{(n)}(\tfrac{1}{2}+\tfrac{1}{\sqrt{8}})\geq 1-\epsilon^k. 
\end{equation}
Since $g$ is monotone, combining the two phases completes the proof.

For the first phase, if $\delta\in(0,1/\sqrt{8})$ then a simple calculation 
shows that 
$$g(\tfrac{1}{2}+\delta)=\tfrac{1}{2}+
\tfrac{3}{2}\delta-2\delta^3\geq\tfrac{1}{2}+\tfrac{5}{4}\delta.$$ 
Thus if $g^{(n)}(\frac{1}{2}+\epsilon)-\frac{1}{2}<1/\sqrt{8}$, we have
\begin{align*}
	g^{(n+1)}(\tfrac{1}{2}+\epsilon)-\tfrac{1}{2}
	\geq \tfrac{5}{4}\left(g^{(n)}(\tfrac{1}{2}+\epsilon)-\tfrac{1}{2}\right)
\geq (\tfrac{5}{4})^{n}\epsilon.
\end{align*} 
It follows immediately that $\mc{O}(|\log \epsilon|)$ iterations are required to achieve~\eqref{eq:g_iter_1}.

For the second phase, note that 
$1-g(1-\delta)=3\delta^2-2\delta^3\leq 3\delta^2$, so that
$$1-g^{(n+1)}(\tfrac{1}{2}+\tfrac{1}{\sqrt{8}})\leq 3 
\left(1-g^{(n)}(\tfrac{1}{2}+\tfrac{1}{\sqrt{8}})\right)^2\leq \tfrac{1}{3}\Big(3(\tfrac{1}{2}-\tfrac{1}{\sqrt{8}})\Big)^{2^n}.$$
Noting that $3(\frac{1}{2}-\frac{1}{\sqrt{8}})<1$, it follows easily that the number of iterations required to obtain~\eqref{eq:g_iter_2} is $\mc{O}(\log|k\log\epsilon|)$.
\end{proof}

We now want to see that there is a (large)
regular ternary tree sitting inside $\mc{T}(\v{B}(t))$.
Let $\mc{T}^{reg}_n=\cup_{k\leq n}\{1,2,3\}^k \subset \mc U$ denote the $n$-level regular ternary tree and, for $l\in\R$, let $\mc{T}^{reg}_l=\mc{T}^{reg}_{\lceil l\rceil}$.
For $\mc T$ a time-labelled ternary tree, we use the relation $\mc T\supseteq \mc{T}^{reg}_l$ to mean that as subtrees of $\mc U$, $\mc{T}^{reg}_l$ is contained inside $\mc T$ (ignoring its time labels).

\begin{lemma} \label{ternary_tree}
Let $k\in\N$ and let $A=A(k)$ be as in Lemma~\ref{g_iteration}. Then 
there exist $a_1=a_1(k)$ and $\epsilon_1=\epsilon_1(k)$ such that, for 
all $\epsilon\in(0,\epsilon_1)$ and $t\geq a_1\epsilon^2|\log \epsilon|$, 
$$\P^\epsilon \l[\mathcal{T}(\v{B}(t))\supseteq \mc{T}^{reg}_{A(k)|\log \epsilon |}\r]\geq 1- \epsilon ^k .$$
\end{lemma}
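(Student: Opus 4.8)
The plan is to reduce the event $\{\mc{T}(\v{B}(t))\supseteq\mc{T}^{reg}_n\}$, with $n=\lceil A(k)|\log\epsilon|\rceil$, to an inequality on branching times along lineages, and then to control the complementary event by a union bound together with a Chernoff estimate for sums of exponential lifetimes. Recall that in $\v{B}(t)$ each particle carries an independent exponential lifetime of mean $\epsilon^2$, at the end of which it branches into three. Labelling potential particles by $\mc U$, let $B_v$ denote the time at which vertex $v$ branches, so that for $v$ at depth $j$ the quantity $B_v$ is a sum of $j+1$ independent lifetimes, namely those of $v$ and of its $j$ ancestors.

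The key observation is that, since a vertex is present in the historical tree only once its parent has branched, all of $\mc{T}^{reg}_n$ (depths $0$ to $n$) is contained in $\mc{T}(\v{B}(t))$ if and only if every depth-$(n-1)$ vertex has branched by time $t$: the ancestors then branch even earlier, and the depth-$n$ leaves are born before $t$. Writing $G$ for a $\mathrm{Gamma}(n,1)$ random variable, a single branching time of a depth-$(n-1)$ vertex is distributed as $\epsilon^2 G$, so a union bound over the $3^{n-1}$ such vertices gives
$$\P^\epsilon\l[\mc{T}(\v{B}(t))\not\supseteq\mc{T}^{reg}_n\r]
=\P\l[\max_{v\in\{1,2,3\}^{n-1}}B_v>t\r]
\leq 3^{n-1}\,\P\l[G>t/\epsilon^2\r].$$
I would then apply the Chernoff bound $\P[G>s]\leq e^{-\theta s}(1-\theta)^{-n}$, valid for $\theta\in(0,1)$. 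Since $t\geq a_1\epsilon^2|\log\epsilon|$ forces $t/\epsilon^2\geq a_1|\log\epsilon|$, taking $\theta=\tfrac12$, abbreviating $c=|\log\epsilon|$ and using $n\leq Ac+1$ yields
$$3^{n-1}\,\P\l[G>a_1 c\r]
\leq 3^{Ac+1}\,e^{-\frac{a_1}{2}c}\,2^{Ac+1}
=6\,\epsilon^{\,\frac{a_1}{2}-A\log 6},$$
since $\epsilon=e^{-c}$ and $3^{Ac}2^{Ac}=\epsilon^{-A\log 6}$. Choosing $a_1=a_1(k)$ with $\tfrac{a_1}{2}-A\log 6\geq k+1$ and $\epsilon_1=\epsilon_1(k)$ small enough that $6\epsilon\leq1$ (and that $Ac\geq1$, so $n\geq1$), the right-hand side is at most $\epsilon^k$, as required.

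The only point needing care is the competition between the union-bound factor $3^{n-1}$ and the Gamma tail. Because $n\asymp|\log\epsilon|$, the factor $3^{n-1}$ is itself a fixed negative power of $\epsilon$, roughly $\epsilon^{-A\log 3}$, so a crude tail estimate would not suffice. The resolution, which is what makes the lemma work, is that we may enlarge $a_1$ freely: increasing $a_1$ strengthens the hypothesis on $t$ and sharpens the Chernoff exponent $\tfrac{a_1}{2}c$ without bound, so for $a_1$ large enough (depending on $k$ through $A(k)$) the tail decay beats the union-bound growth by the margin $\epsilon^{k+1}$. The remaining probabilistic content is entirely elementary once the embedding event has been rewritten as the branching-time inequality above.
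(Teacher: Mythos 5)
Your proof is correct and follows essentially the same route as the paper: a union bound over the deepest generation of the regular ternary tree, followed by an exponential tail estimate for a sum of $\mc{O}(|\log\epsilon|)$ independent exponential lifetimes, with $a_1$ chosen large enough that the tail decay beats the entropy factor $3^{\mc{O}(|\log\epsilon|)}$. The only cosmetic difference is that you use a direct, non-asymptotic Chernoff bound at $\theta=\tfrac{1}{2}$ where the paper invokes Cram\'er's theorem (and must then shrink $\epsilon_1$ so that the finite-$n$ probability is within its asymptotic rate); your version is, if anything, slightly more self-contained.
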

\begin{proof}
First we establish control over the tail distribution of the sum of
$n$ independent exponentially distributed (branching) times.
Suppose $(X_j)_{j\geq 1}$ are i.i.d.~Exp(1) random variables and let $S_n=\sum_{j=1}^n X_j$. Then 
$$ M_X(\lambda )=\E \l[ e^{\lambda X} \r]= 
\begin{cases}
   \frac{1}{1-\lambda} & \text{if } \lambda <1 \\
   \infty       & \text{if } \lambda \geq 1
  \end{cases}$$
and for $a\geq 1$,
$$\Psi^*(a):=\sup_{\lambda \geq 0}(\lambda a -\log M_X(\lambda))=\sup_{0\leq \lambda <1}(\lambda a+\log (1-\lambda))=a-1-\log a. $$
By Cram\'{e}r's theorem, for $a\geq 1$,
\begin{equation} \label{cramer}
\lim_{n\rightarrow \infty} \l( -\frac{1}{n}\log \P [S_n \geq na] \r)=\Psi^* (a)=a-1-\log a. 
\end{equation}

Suppose $a\geq 1$. 
For each leaf of $\mc{T}^{reg}_l$ we use~\eqref{cramer} to 
estimate the probability that it is not
in $\mc{T}(\v{B}(t))$ and combine with a union bound (summing over leaves).
For $t\geq a \epsilon ^2 \lceil A|\log \epsilon | \rceil$ we have
\begin{align}
&\P^\epsilon \l[\mathcal{T}(\v{B}(t))\nsupseteq \mc T^{reg}_{A|\log \epsilon|}\r]
\notag\\
&\hspace{1cm}\leq 3^{\lceil A|\log \epsilon |\rceil}\P \l[\epsilon ^2 S_{\lceil A|\log \epsilon |\rceil}\geq a\epsilon ^2 \lceil A|\log \epsilon|\rceil\r]\notag\\
&\hspace{1cm}=\exp\l(\lceil A|\log \epsilon|\rceil \l(\log 3 + \frac{1}{\lceil A|\log \epsilon|\rceil }\log \P\l[S_{\lceil A|\log \epsilon |\rceil}\geq a\lceil A|\log \epsilon |\rceil\r]\r)\r).\label{ugh}
\end{align}
By~\eqref{cramer} (with $n=\lceil A|\log\epsilon|\rceil$), 
we can choose $\epsilon_1(k)<e^{-1}$ such that, for all $\epsilon\in(0,\epsilon_1)$, 
$$\frac{1}{\lceil A|\log \epsilon|\rceil }\log \P\l[S_{\lceil A|\log \epsilon |\rceil}\geq a\lceil A|\log \epsilon |\rceil\r]\leq -a+3/2+\log a.$$
Choose $a\geq 1$ sufficiently large that $-a+3/2+\log a\leq -\log 3 -k/A$.
Putting this into~\eqref{ugh} we obtain
$$
\P^\epsilon \l[\mathcal{T}(\v{B}(t))\nsupseteq \mc T^{reg}_{A|\log \epsilon|}\r]
\leq \exp\l(-|\log\epsilon| k\r)
$$
for $t\geq a \epsilon ^2 \lceil A|\log \epsilon | \rceil$. Letting $a_1=a(A+1)$ completes the proof.
\end{proof}

We now control the maximal displacement of individuals in the ternary
branching Brownian motion at small times.
Let $N(t)$ denote the set of individuals alive in $\v B(t)$.
\begin{lemma} \label{all_particles_bound}
Let $k\in\N$, and let $a_1(k)$ be as in Lemma~\ref{ternary_tree}. Then there exist $d_1(k)$, $\epsilon_1(k)$ such that, for all $\epsilon \in (0,\epsilon_1(k))$ and all $s\leq a_1\epsilon^2|\log\epsilon|$, 
$$\P^\epsilon _x\l[\exists i\in N(s):|B_i(s)-x|\geq d_1(k)\epsilon |\log \epsilon | \r] \leq \epsilon ^k.$$
\end{lemma}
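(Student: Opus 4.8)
The plan is to control the expected total number of particles in the ternary branching Brownian motion up to time $s$ and then combine this with a Gaussian tail bound for the displacement of a single lineage via a first-moment (union bound) argument. Write $Z_s=|N(s)|$ for the number of individuals alive at time $s$. Since each individual branches independently at rate $1/\epsilon^2$ and every branching event replaces one particle by three (so the count increases by two), $Z$ is a linear pure birth process whose mean satisfies $\tfrac{d}{ds}\E^\epsilon_x[Z_s]=\tfrac{2}{\epsilon^2}\E^\epsilon_x[Z_s]$, giving $\E^\epsilon_x[Z_s]=e^{2s/\epsilon^2}$. For $s\leq a_1\epsilon^2|\log\epsilon|$ this yields $\E^\epsilon_x[Z_s]\leq \epsilon^{-2a_1}$, a bound whose worst (largest) case occurs at $s=a_1\epsilon^2|\log\epsilon|$.

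Next I would decouple the genealogy from the spatial motion. Let $\mathcal G$ denote the $\sigma$-algebra generated by the branching structure (the tree shape together with the branching times), so that $Z_s$ is $\mathcal G$-measurable and is independent of the driving Brownian paths. Conditional on $\mathcal G$, the path traced by each leaf $i\in N(s)$ is a Brownian motion (run at rate $2$) started from $x$ of total duration exactly $s$; hence, \emph{regardless of the tree}, $B_i(s)-x\sim N(0,2s)$ for every leaf. The standard two-sided Gaussian tail estimate then gives $\P^\epsilon_x[\,|B_i(s)-x|\geq y \mid \mathcal G\,]\leq 2e^{-y^2/(4s)}$ for each fixed leaf, uniformly in $i$ and in $\mathcal G$.

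Combining these, I would apply a union bound over the (random, but $\mathcal G$-measurable) set of leaves and then take expectations:
$$\P^\epsilon_x\l[\exists i\in N(s): |B_i(s)-x|\geq y\r]\leq \E^\epsilon_x\l[Z_s\r]\cdot 2e^{-y^2/(4s)}.$$
With $y=d_1\epsilon|\log\epsilon|$ and $s\leq a_1\epsilon^2|\log\epsilon|$ one has $y^2/(4s)\geq d_1^2|\log\epsilon|/(4a_1)$, so $2e^{-y^2/(4s)}\leq 2\epsilon^{d_1^2/(4a_1)}$; together with the particle-count bound the right-hand side is at most $2\epsilon^{d_1^2/(4a_1)-2a_1}$. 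Choosing $d_1(k)$ large enough that $d_1^2/(4a_1)-2a_1>k$ (e.g.\ $d_1=2\sqrt{a_1(k+1+2a_1)}$) makes the exponent of $\epsilon$ strictly larger than $k$, so that for all sufficiently small $\epsilon$ (which defines $\epsilon_1(k)$) the prefactor $2$ is absorbed and the bound $\epsilon^k$ follows.

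I expect the argument to be essentially routine; the only points requiring care are (i) getting the exponential growth rate of $\E^\epsilon_x[Z_s]$ right, in particular that a ternary split contributes a factor $2$, not $3$, to the rate, and (ii) ensuring all estimates hold uniformly over the whole range $s\leq a_1\epsilon^2|\log\epsilon|$, which works precisely because both $\E^\epsilon_x[Z_s]$ and $e^{-y^2/(4s)}$ are largest at the right endpoint $s=a_1\epsilon^2|\log\epsilon|$. The conditioning on $\mathcal G$ is the key device that handles the correlation between the random number of particles and the correlated leaf paths, since every leaf endpoint is marginally $N(x,2s)$ no matter what the genealogy is.
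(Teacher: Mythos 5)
Your proposal is correct and is essentially the paper's own proof: both arguments bound the probability by $\E^\epsilon[|N(s)|]\cdot\P[\sqrt{2s}\,|Z|\geq d_1\epsilon|\log\epsilon|]$ via a first-moment/union bound, use $\E^\epsilon[|N(s)|]\leq e^{2s/\epsilon^2}\leq\epsilon^{-2a_1}$ together with a Gaussian tail estimate, and conclude by choosing $d_1$ so that $d_1^2/(4a_1)-2a_1\geq k$. The only cosmetic difference is that you make explicit the conditioning on the genealogy to justify the union bound over the random set of leaves, which the paper leaves implicit in its appeal to Markov's inequality.
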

\begin{proof}
Write $\delta_1 =a_1\epsilon^2|\log\epsilon|$ and let $Z$ be a $N(0,1)$
distributed random variable. 
By Markov's inequality, for $s\leq\delta_1$ we have
\begin{align*}
\P^\epsilon_x\l[\exists i\in N(s):|B_i(s)-x|\geq d_1 \epsilon |\log \epsilon | \r]
&\leq \E^\epsilon\l[|N(s)|\r] \P\l[\sqrt{2s}|Z|\geq d_1\epsilon |\log \epsilon |\r]\\
&\leq \E^\epsilon \l[|N(\delta_1)|\r] \P\l[\sqrt{2\delta_1}|Z|\geq d_1\epsilon |\log \epsilon |\r]\\
&=e^{2 \delta_1 /\epsilon^2} \P \l[\sqrt{2a_1}|Z|\geq d_1 |\log \epsilon |^{1/2}\r]\\
&\leq \frac{1}{\epsilon ^{2a_1}}\exp\l(-\tfrac{1}{4}\tfrac{d_1^2}{a_1} |\log \epsilon |\r)\\
&=\epsilon ^{\tfrac{1}{4} \tfrac{d_1^2}{a_1} -2a_1}.
\end{align*}
Here the fourth line holds for $\epsilon >0$ sufficiently small.
The proof is completed by choosing $d_1=d_1(k)$ large enough that $\frac{d_1^2}{4a_1}-2a_1\geq k$. 
\end{proof}

We now have all the ingredients needed to prove Theorem~\ref{thm:BBMone}. 
If $z \geq 2d_1 \epsilon |\log \epsilon |$, then, at time 
$\delta_1 =a_1\epsilon^2|\log \epsilon |$, by Lemma~\ref{all_particles_bound},
with high probability, all individuals in $\v{B}(\delta_1)$
are still $\geq d_1 \epsilon |\log\epsilon|$. Lemma~\ref{no_demagnification}
tells us that there is a positive voting bias at each of those points and
Lemma~\ref{ternary_tree} shows that this will be magnified by at least
$\mc O (|\log\epsilon|)$ rounds of majority voting as we trace back to the 
root. Finally, Lemma~\ref{g_iteration} gives us a lower bound on the 
bias at the root. 

\begin{proof} [Of Theorem~\ref{thm:BBMone}.] We will prove the first 
statement of the theorem; the second then follows by symmetry.

For all $\epsilon<1/2$, define $z_\epsilon $ implicitly by the relation 
$\P\l[B_{T^*}\geq -z_\epsilon \r]=\tfrac{1}{2}+\epsilon$, and note that 
$z_\epsilon \sim \epsilon \sqrt{4 \pi T^*}$ as $\epsilon \rightarrow 0$. 
Let $\epsilon_1(k)<1/2$ be sufficiently small that Lemmas~\ref{ternary_tree} 
and~\ref{all_particles_bound} hold for $\epsilon \in (0, \epsilon_1(k))$.
Let $d_1(k)$ be given by Lemma~\ref{all_particles_bound} and let 
$c_1(k)=2d_1(k)$ so that (by reducing $\epsilon_1$ if necessary), 
for $\epsilon\in (0,\epsilon_1)$, 
\begin{equation}\label{eq:room for shift}
d_1(k)\epsilon|\log \epsilon |+z_\epsilon\leq c_1(k)\epsilon|\log \epsilon|.
\end{equation}
Let $a_1(k)$ be given by Lemma~\ref{ternary_tree} and let 
\begin{equation}
\label{delta 1}
\delta_1=\delta_1(k,\epsilon)=a_1(k)\epsilon^2|\log\epsilon|.
\end{equation}

If $t\in(0,\delta_1)$ and $z\geq c_1\epsilon|\log \epsilon |$, then
\begin{align*}
\P^\epsilon_z\l[\Vote(\v{B}(t))=0\r]
&\leq \P^\epsilon_z\big[\exists i\in N(t)\text{ such that }|B_i(t)-z|\geq d_1\epsilon |\log \epsilon | \big]\\ 
&\leq \epsilon ^k, 
\end{align*}
where the second line follows by Lemma~\ref{all_particles_bound}.

We now suppose that $t\in [\delta_1,T^*]$ and $z\geq c_1 \epsilon |\log \epsilon |$.
Let $\mathcal T_{\delta_1}=\mathcal T(\v{B}(\delta_1))$ 
denote the time-labelled tree of the branching Brownian motion
up to time $\delta_1$. We define 
$$p_{t-\delta_1}(z)=\P_z^\epsilon\l[\Vote(\v{B}(t-\delta_1))=1\r],$$
and 
$$p_{t-\delta_1}^\epsilon(z) = p_{t-\delta_1}(z_\epsilon), 
\quad\mbox{ for all }z\in\R.$$ 
Finally, write $\{\v{B}(\delta_1)>z_\epsilon\}$ for the event
$B_i(\delta_1)>z_\epsilon$ for all $i\in N(\delta_1)$. 
Then, 
\begin{align}
\P^\epsilon_z\l[\Vote(\v{B}(t))=1\r]&=
	\P^\epsilon_z\l[\Vote_{p_{t-\delta_1}(z)}(\v{B}(\delta_1))=1\r]
	\notag\\
	&\geq	
	\P^\epsilon_z\l[\l\{\Vote_{p^\epsilon_{t-\delta_1}(z)}(\v{B}(\delta_1))=1
	\r\}\cap\l\{\v{B}(\delta_1)>z_\epsilon\r\}\r]\notag
\\
	&\geq	
	\P^\epsilon_z\l[\Vote_{p^\epsilon_{t-\delta_1}(z)}(\v{B}(\delta_1))=1\r]
	-\epsilon^k.\label{captureandbound}
\end{align}
Here, the first line follows by the Markov property of $\v{B}$ at time $\delta_1$. 
The second follows by the monotonicity property \eqref{eq:monotonicity}. The third line then follows by Lemma \ref{all_particles_bound}, using \eqref{eq:room for shift} and our hypothesis that $z \geq c_1\epsilon|\log\epsilon|$.

We have 
\begin{equation}\label{l13appl}
p^\epsilon_{t-\delta_1}(z)\geq
P_{z_\epsilon}\l[B_{t-\delta_1}\geq 0\r]\geq \tfrac{1}{2}+\epsilon. 
\end{equation}
Here, the first inequality follows from Lemma~\ref{no_demagnification}. The second follows by the definition of $z_\epsilon$, since $t-\delta_1<T^*$. 

If $p_i\geq 1/2$ for $i=1,2,3$ then \eqref{hpos} implies that $g(p_1,p_2,p_3)\geq \min(p_1,p_2,p_3)$. Hence, if
each leaf of $\mc{T}_{\delta_1}$ votes $1$ independently with 
probability at least $\tfrac{1}{2}+\epsilon$ and 
$\mathcal T_{\delta_1}\supseteq \mc T^{reg}_{A|\log \epsilon|}$, 
then 
each of the leaves of $\mc T^{reg}_{A|\log \epsilon|}$ votes 1 
independently with probability at least $\tfrac{1}{2}+\epsilon$. Therefore,
$$\P^\epsilon_z\l[\Vote(\v{B}(t))=1\r]\geq 
g^{(\lceil A|\log \epsilon| \rceil)}(\tfrac{1}{2}+\epsilon)-2\epsilon^k
\geq 1-3\epsilon ^k.$$
Here, the first inequality follows by substituting \eqref{l13appl} into \eqref{captureandbound} and then applying Lemma~\ref{ternary_tree} and the second then follows by Lemma~\ref{g_iteration}. This completes the proof. 
\end{proof}

\subsubsection{The slope of the interface}

In proving Theorem~\ref{thm:BBMtwo} we shall also exploit a 
lower bound on the `slope' of the interface in $\dim =1$ which we prove in this
subsection. We obtain it as a corollary of the following result.

\begin{prop} \label{prop:deriv1}
Suppose $x\geq 0$ and $\eta>0$. Then for any time-labelled ternary tree $\mc{T}$ and any time $t$,
$$\P_{x}^t(\mc{T})-\P_{x-\eta}^t(\mc{T})\geq \P_{x+\eta}^t(\mc{T})-\P_{x}^t(\mc{T}). $$
\end{prop}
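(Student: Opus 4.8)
The plan is to reduce the statement to a concavity property of $z\mapsto\P^t_z(\mc T)$ and to prove that property by induction on the number of branch points of $\mc T$. Throughout write $\psi(z):=\P^t_z(\mc T)$, suppressing $\mc T$ and $t$. Recall that $\psi$ is antisymmetric about $(0,\tfrac12)$ by~\eqref{prob_symmetry}, non-decreasing by~\eqref{eq:monotonicity} (so $\psi(z)\ge\tfrac12$ for $z\ge0$), and, for $t>0$, smooth (being a Gaussian convolution). Differentiating the antisymmetry relation shows $\psi'$ is even, and I claim $\psi$ is concave on $[0,\infty)$. Granting this, $\psi'$ is even and decreasing on $[0,\infty)$, so for $x\ge0$, $\eta>0$,
\[
\big(\psi(x)-\psi(x-\eta)\big)-\big(\psi(x+\eta)-\psi(x)\big)=\int_0^\eta\big(\psi'(x-r)-\psi'(x+r)\big)\,dr\ge0,
\]
since $|x-r|\le x+r$ forces $\psi'(x-r)=\psi'(|x-r|)\ge\psi'(x+r)$; this is exactly the assertion of the Proposition. (The degenerate case $t=0$, where $\psi=\1_{[0,\infty)}$, is immediate.)

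It therefore remains to show that $\psi$ is concave on $[0,\infty)$, which I would prove by induction on the number of internal vertices of $\mc T$. For the base case $\mc T=\mc T_0$ we have $\psi(z)=P_z[B_t\ge0]=\int_{-z}^{\infty}\phi_{0,2t}(w)\,dw$ (recall all Brownian motions run at rate $2$), so $\psi'(z)=\phi_{0,2t}(z)$ and $\psi''(z)=-\tfrac{z}{2t}\phi_{0,2t}(z)\le0$ for $z\ge0$, as required.

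For the inductive step, let the first branch of $\mc T$ occur at time $\tau\in(0,t)$ with offspring subtrees $\mc T_1,\mc T_2,\mc T_3$, and set $F_i(y):=\P^{t-\tau}_y(\mc T_i)$; by the inductive hypothesis each $F_i$ is concave on $[0,\infty)$, and it is also non-decreasing, smooth, antisymmetric, and satisfies $F_i\ge\tfrac12$ on $[0,\infty)$. Put $G:=g(F_1,F_2,F_3)$, so that by~\eqref{branchid}, $\psi(z)=E_z[G(B_\tau)]=(G\ast\phi_{0,2\tau})(z)$. The key computation is that $G$ is concave on $[0,\infty)$. Writing $g(p)=p_1p_2+p_2p_3+p_3p_1-2p_1p_2p_3$, one finds $\partial_{ii}g=0$ and $\partial_{ij}g=1-2p_k$ for $\{i,j,k\}=\{1,2,3\}$, whence by the chain rule, for $y\ge0$,
\[
G''=\sum_{i\ne j}(1-2F_k)\,F_i'F_j'+\sum_i(\partial_i g)\,F_i''\le0,
\]
because on $[0,\infty)$ we have $1-2F_k\le0$, $F_i'F_j'\ge0$, $\partial_i g\ge0$ and $F_i''\le0$. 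I expect this to be the main obstacle: note that $g$ is \emph{not} concave on $[1/2,1]^3$ (its Hessian has zero diagonal, hence is indefinite), and it is precisely the restriction $F_k\ge\tfrac12$, valid because $y\ge0$, that renders the mixed second partials nonpositive and so salvages concavity of the composition.

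Finally I would check that convolution with the symmetric, unimodal Gaussian kernel $\phi_{0,2\tau}$ preserves concavity on $[0,\infty)$ for antisymmetric integrands, which closes the induction. Since $G$ is antisymmetric about $(0,\tfrac12)$ (by~\eqref{gantisym} and antisymmetry of the $F_i$), its second derivative $G''$ is odd; together with $G''\le0$ on $[0,\infty)$ this gives, for $z\ge0$,
\[
\psi''(z)=(G\ast\phi_{0,2\tau})''(z)=\int_0^\infty G''(u)\big(\phi_{0,2\tau}(z-u)-\phi_{0,2\tau}(z+u)\big)\,du\le0,
\]
using $G''(u)\le0$ and $\phi_{0,2\tau}(z-u)\ge\phi_{0,2\tau}(z+u)$ for $u,z\ge0$ (as $|z-u|\le z+u$ and $\phi_{0,2\tau}$ decreases in $|\cdot|$). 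This reflection step is the same device as in the proof of Lemma~\ref{no_demagnification}, cf.~\eqref{eq:fliptrick}. Hence $\psi$ is concave on $[0,\infty)$, completing the induction and the proof.
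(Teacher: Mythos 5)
Your argument is correct, and at its core it uses the same three ingredients as the paper's proof: induction over the tree via the first-branch decomposition \eqref{branchid}, the Gaussian reflection trick (cf.\ \eqref{eq:fliptrick} and \eqref{eq:integ}), and the sign of the mixed second-order terms of $g$ when all arguments are at least $\tfrac12$. But you organize it differently, and the difference is substantive enough to be worth recording. The paper's inductive statement is the finite-difference inequality itself; nothing is ever differentiated, and its inductive step needs an extra manoeuvre --- monotonicity of $g$ combined with the inductive hypothesis, to pass to the symmetric triple in \eqref{symmetric version} --- before invoking the exact identity $g(p_1+\eta_1,p_2+\eta_2,p_3+\eta_3)-2g(p_1,p_2,p_3)+g(p_1-\eta_1,p_2-\eta_2,p_3-\eta_3)=2\eta_1\eta_2(1-2p_3)+2\eta_2\eta_3(1-2p_1)+2\eta_3\eta_1(1-2p_2)$, which is precisely the finite-difference avatar of your Hessian computation (exact because $g$ is affine in each variable). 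Your inductive statement is instead concavity of $z\mapsto\P^t_z(\mc T)$ on $[0,\infty)$, obtained from the chain rule --- where, as you rightly stress, it is the constraint $F_k\ge\tfrac12$ that rescues concavity of the indefinite $g$ --- followed by preservation of half-line concavity under Gaussian convolution of antisymmetric functions. The price is two steps the paper never needs: smoothness of every $\P^{t-\tau}_\cdot(\mc T_i)$ together with the identity $(G*\phi_{0,2\tau})''=G''*\phi_{0,2\tau}$ (routine, since every such map is a Gaussian convolution of a bounded function, but it should be justified once), and the recovery of the proposition from concavity via evenness of $\psi'$, which is genuinely needed because $x-\eta$ may be negative. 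The gain is that the chain rule symmetrizes automatically, so the analogue of \eqref{symmetric version} and the monotonicity-of-$g$ step disappear; the paper's discrete version, in exchange, never has to address regularity of $\P^t_z(\mc T)$ in $z$ at all.
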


\begin{proof}
The proof is by induction on the number of branching events in $\mc T$, and is similar to the proof of Lemma~\ref{no_demagnification}.
For $\mc{T}_0$ a (time-labelled) tree with a root and a single leaf, we have
$$ \P_{x}^t(\mc{T}_0)-\P_{x-\eta}^t(\mc{T}_0)=\int_{x-\eta}^{x}\phi_{0,2t}(u)\,du\geq \int_x^{x+\eta}\phi_{0,2t}(u)\,du=\P_{x+\eta}^t(\mc{T}_0)-\P_{x}^t(\mc{T}_0)$$
where $\phi_{\mu,\sigma^2}$ is the density of a N$(\mu,\sigma ^2)$ random variable. 

Now, assume that the lemma holds for all time-labelled ternary trees with 
at most $n$ internal vertices. Let $\mc{T}$ be a time-labelled ternary tree 
with $n+1$ internal vertices and suppose that the time to the first branching 
event of $\mc{T}$ is $\tau$ and let $\mc{T}_1$, $\mc{T}_2$, $\mc{T}_3$ 
denote the trees of the three offspring of that branching. Then using 
the notation of~\eqref{eq:star_notation},
\begin{align}
&\l( \P_{x}^t(\mc{T})-\P_{x-\eta}^t(\mc{T})\r)-\l(\P_{x+\eta}^t(\mc{T})-\P_{x}^t(\mc{T})\r)\notag\\
&\hspace{0.5pc}=\l(E_{x}\l[g(\P_{B_{\tau}}^{t-\tau}(\mc{T}\star))\r]-
	E_{x-\eta}\l[g(\P_{B_{\tau}}^{t-\tau}(\mc{T}\star))\r]\r)
	-\l(E_{x+\eta}\l[g(\P_{B_{\tau}}^{t-\tau}(\mc{T}\star))\r]-
	E_{x}\l[g(\P_{B_{\tau}}^{t-\tau}(\mc{T}\star))\r]\r)\notag\\
&\hspace{0.5pc}=\int_{-\infty}^\infty\l\{ \big(g(\P^{t-\tau}_{y}(\mc{T}\star))-
	g(\P^{t-\tau}_{y-\eta}(\mc{T}\star))\big)-
\big(g(\P^{t-\tau}_{y+\eta}(\mc{T}\star))-
g(\P^{t-\tau}_{y}(\mc{T}\star))\big)\r\}\phi_{x,2\tau}(y)dy\notag\\
&\hspace{0.5pc}=\int_0^\infty \l\{\big(g(\P^{t-\tau}_{y}(\mc{T}\star))-
g(\P^{t-\tau}_{y-\eta}(\mc{T}\star))\big)-
\big(g(\P^{t-\tau}_{y+\eta}(\mc{T}\star))-g(\P^{t-\tau}_{y}(\mc{T}\star))
\big)\r\}(\phi_{x,2\tau}(y)-\phi_{x,2\tau}(-y))\,dy.\label{eq:integ}
\end{align}
Here, the second line follows by~\eqref{branchid} and the last line follows 
from~\eqref{gantisym} and~(\ref{prob_symmetry}), which imply that 
$g(\P^t_w(\mc{T}\star))=1-g(\P^t_{-w}(\mc{T}\star))$. Note the similarity 
to~\eqref{eq:fliptrick}.

Since $x\geq 0$, we have
\begin{equation}\label{eq:phipve}
\phi_{x,2\tau}(y)-\phi_{x,2\tau}(-y)\geq 0
\end{equation}
for $y\geq 0$. 
In view of \eqref{eq:integ} we should like to check that for $y\geq 0$
\begin{equation}\label{eq:integpve}
\l(g(\P^{t-\tau}_{y}(\mc{T}\star ))-g(\P^{t-\tau}_{y-\eta}(\mc{T}\star ))\r)
-\l(g(\P^{t-\tau}_{y+\eta}(\mc{T}\star ))-g(\P^{t-\tau}_{y}(\mc{T}\star ))\r)\geq 0.
\end{equation}
By our inductive hypothesis, for $y\geq 0$ we have
\begin{equation*}
\big(\P^{t-\tau}_{y}(\mc{T}_i)-\P^{t-\tau}_{y-\eta}(\mc{T}_i)\big)-
\big(\P^{t-\tau}_{y+\eta}(\mc{T}_i)-\P^{t-\tau}_{y}(\mc{T}_i)\big)\geq 0, 
\end{equation*}
and so by monotonicity of $g$, for~\eqref{eq:integpve}
it is enough to check that 
\begin{equation}
	\label{symmetric version}
g(\P^{t-\tau}_{y+\eta}(\mc{T}\star ))-2g(\P^{t-\tau}_{y}(\mc{T}\star ))
+g\l(\P^{t-\tau}_{y}(\mc{T}\star )-(\P^{t-\tau}_{y+\eta}(\mc{T}\star )
-\P^{t-\tau}_{y}(\mc{T}\star ))\r)\leq 0.
\end{equation}
To see that \eqref{symmetric version} holds, 
note that 
\begin{align}
&g(p_1+\eta_1,p_2+\eta_{2},p_3+\eta_3)-2g(p_1,p_2,p_3)+g(p_1-\eta_1,p_2-\eta_{2},p_3-\eta_3)\notag\\
&\hspace{2pc}
=2\eta_1 \eta_2 (1-2p_3)+2\eta_2 \eta_3 (1-2p_1)+2\eta_3 \eta_1 (1-2p_2).
\notag 
\end{align}
and set $p_i =\P^{t-\tau}_{y}(\mc{T}_i) $ and 
$\eta_i=\P^{t-\tau}_{y+\eta}(\mc{T}_i)-\P^{t-\tau}_{y}(\mc{T}_i)$. 
Since for $y\geq 0$, $p_i \geq 1/2$, the inequality \eqref{symmetric version} 
then follows.

Putting \eqref{eq:phipve} and \eqref{eq:integpve} into~\eqref{eq:integ} 
completes the inductive step, which in turn completes the proof.
\end{proof}

\begin{cor} \label{lem:high_deriv} 
Take $\epsilon_1(1)$ and $c_1(1)$ from Theorem~\ref{thm:BBMone}.
Let $\epsilon <\min(\epsilon_1(1),\tfrac{1}{24})$.
Suppose that for some $t\in [0,T^*]$ and $z\in \R$, 
\begin{equation}\label{eq:near_interface}
\l|\P^\epsilon_z\l[\Vote (\v{B}(t))=1\r]-\tfrac{1}{2}\r|\leq \tfrac{5}{12},
\end{equation}
and let $w\in \R$ with $|z-w|\leq c_1(1)\epsilon |\log \epsilon|$. Then 
\begin{equation} \label{eq:high_deriv}
\l|\P^\epsilon_z \l[\Vote(\v{B}(t))=1\r]-\P^\epsilon_w\l[\Vote(\v{B}(t))=1\r]\r|\geq \frac{|z-w|}{48 c_1(1)\epsilon |\log \epsilon |}.
\end{equation}
\end{cor}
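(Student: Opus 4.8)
Write $v(x):=\P^\epsilon_x[\Vote(\v{B}(t))=1]$, suppressing the fixed $t$ and $\epsilon$, and set $L:=c_1(1)\epsilon|\log\epsilon|$. The plan is to deduce that $v$ is increasing and concave on $[0,\infty)$, locate $z$ inside the interface region, and then read off \eqref{eq:high_deriv} from the monotonicity of difference quotients of a concave function. First I would reduce to the case $z\geq 0$: averaging \eqref{prob_symmetry} over the tree law gives $v(-x)=1-v(x)$, and replacing $(z,w)$ by $(-z,-w)$ leaves $|z-w|$, the hypothesis \eqref{eq:near_interface}, and the target $|v(z)-v(w)|$ all unchanged, so there is no loss in assuming $z\geq 0$. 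Next I would transfer Proposition~\ref{prop:deriv1} to $v$. Since the law of $\mathcal{T}(\v{B}(t))$ does not depend on the starting point, $v(x)=\E^\epsilon[\P^t_x(\mathcal{T}(\v{B}(t)))]$, and each $\P^t_{\cdot}(\mc{T})$ is smooth in its spatial argument (the polynomial $g$ composed with Gaussian convolutions). The midpoint-concavity inequality of Proposition~\ref{prop:deriv1} therefore upgrades to genuine concavity of $x\mapsto\P^t_x(\mc{T})$ on $[0,\infty)$, which is inherited by the average $v$. Combined with \eqref{eq:monotonicity} and the identity $v(0)=\tfrac12$ (from \eqref{prob_symmetry} at $z=0$), this shows that $v$ is non-decreasing, concave on $[0,\infty)$, and equal to $\tfrac12$ at $0$.

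I would then pin down the location of $z$ and the value of $v$ at $L$. Hypothesis \eqref{eq:near_interface} gives $v(z)\leq\tfrac12+\tfrac{5}{12}=\tfrac{11}{12}$, while Theorem~\ref{thm:BBMone} with $k=1$ gives $v(x)\geq 1-\epsilon$ for $x\geq L$. Since $\epsilon<\tfrac{1}{24}$ forces $1-\epsilon>\tfrac{11}{12}$, monotonicity yields $z\in[0,L)$; moreover $v(L)-v(x)\geq(1-\epsilon)-\tfrac{11}{12}\geq\tfrac{1}{24}$ for any $x$ with $v(x)\leq\tfrac{11}{12}$, and $v(L)-v(0)\geq\tfrac12-\epsilon$.

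Finally I would extract the slope bound using the fact that, for a concave function, the difference quotient from a fixed endpoint is monotone in the other endpoint, together with $L-\cdot\leq L$ on $[0,L]$. If $z\leq w\leq L$ I compare the quotient on $[z,w]$ with that on $[z,L]$; if $w>L$ I use $v(w)\geq v(L)$ directly; if $0\leq w<z$ I compare the quotient on $[w,z]$ with that on $[w,L]$; and if $w<0$ I split $v(z)-v(w)=(v(z)-v(0))+(v(-w)-v(0))$, the second increment rewritten via antisymmetry, and bound each increment from the base point $0$ using $v(L)-v(0)\geq\tfrac12-\epsilon$ and concavity. In every configuration the resulting bound is at least $\tfrac{|z-w|}{24L}\geq\tfrac{|z-w|}{48L}$, which is precisely \eqref{eq:high_deriv}.

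The only genuine obstacle is the bookkeeping in this last step, and in particular the case $w<0$: because Proposition~\ref{prop:deriv1} yields concavity only on $[0,\infty)$, one cannot treat $[w,z]$ as a single interval, and must instead split at the symmetry point $0$ and invoke the antisymmetry $v(-x)=1-v(x)$. The remaining ingredients—the reduction to $z\geq 0$, the passage from the per-tree statement to $v$, and the localisation of $z$—are routine.
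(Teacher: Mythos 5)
Your proof is correct, and it rests on the same two pillars as the paper's own argument: Proposition~\ref{prop:deriv1}, and the drop $v(L)-v(z)\geq(1-\epsilon)-\tfrac{11}{12}\geq\tfrac{1}{24}$ (writing $v(x):=\P^\epsilon_x[\Vote(\v{B}(t))=1]$ and $L:=c_1(1)\epsilon|\log\epsilon|$, as you do) supplied by Theorem~\ref{thm:BBMone} together with \eqref{eq:near_interface}. The real difference is how Proposition~\ref{prop:deriv1} is exploited. The paper keeps it discrete: iterating the proposition shows that the increments $v((j+1)\eta+z)-v(j\eta+z)$, with $\eta=|z-w|$, are non-increasing in $j$, and telescoping across $[z,L]$ converts the $\tfrac{1}{24}$ drop into \eqref{eq:high_deriv}; this needs no regularity of $v$ at all. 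You instead note that the proposition is exactly midpoint concavity at each $x\geq0$, upgrade it (via continuity, which you justify by Gaussian smoothing, and which could only be delicate at $t=0$, where \eqref{eq:near_interface} is vacuous) to genuine concavity of $v$ on $[0,\infty)$, and then use monotonicity of difference quotients --- the continuous counterpart of the paper's summation. What your packaging buys is a cleaner endgame, and in one respect a more complete one: you correctly observe that the mixed-sign case $w<0\leq z$ is not literally covered by the paper's stated split ($0\leq z\leq w$; $0\leq w\leq z$ by exchanging roles; $z\leq0$ by \eqref{prob_symmetry}), since reflection maps it to another mixed-sign configuration, and your fix --- splitting $v(z)-v(w)=(v(z)-v(0))+(v(-w)-v(0))$ by antisymmetry and using $v(L)-v(0)\geq\tfrac{1}{2}-\epsilon$ --- is valid and preserves the constant. (For completeness: the paper's telescoping also survives in that case after one extra application of Proposition~\ref{prop:deriv1} at $x=z$, which gives $v(z)-v(w)\geq v(z+\eta)-v(z)$ and allows one to telescope upward from $z$ rather than from $w$; so the omission there is cosmetic rather than substantive.) Your constants check out in every configuration, each case yielding at least $|z-w|/(48L)$.
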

\begin{proof}
Consider first the case $0\leq z \leq w$. By analogy with~(\ref{branchid}),
let $\P_y^t$ denote $\P^\epsilon_y\l[\Vote(\v B(t))=1\r]$.
By Theorem~\ref{thm:BBMone} and \eqref{eq:near_interface} we have that
\begin{equation} \label{eq:high_der_lower}
\P^t_{c_1(1)\epsilon |\log \epsilon|}-\P^t_z \geq 1-\epsilon - \tfrac{11}{12}\geq \tfrac{1}{24}.
\end{equation}
Let $\eta:=w-z$. For $j\in \N$, applying Proposition~\ref{prop:deriv1} $j$ times gives that 
$$\P^t_{(j+1)\eta +z}-\P^t_{j\eta +z}\leq \P^t_w-\P^t_z.$$ 
It follows that
\begin{align} \label{eq:high_der_upper}
\P^t_{c_1(1)\epsilon |\log \epsilon|}-\P^t_z 
&\leq \sum_{j=0}^{\lceil \eta^{-1}(c_1(1)\epsilon |\log \epsilon | -z)\rceil-1}(\P^t_{(j+1)\eta +z}-\P^t_{j\eta +z}) \notag \\
&\leq (\eta^{-1}(c_1(1)\epsilon |\log \epsilon |)+1) (\P^t_w-\P^t_z).
\end{align}
Combining~\eqref{eq:high_der_lower} and~\eqref{eq:high_der_upper}, 
$$
\P^t_w-\P^t_z \geq \frac{|z-w|}{24 (c_1(1)\epsilon |\log \epsilon |+|z-w|)}\geq \frac{|z-w|}{48 c_1(1)\epsilon |\log \epsilon |}.
$$
The corresponding result for $0\leq w \leq z$ follows by symmetry (exchanging the roles of $w$ and $z$). The case $z\leq 0$ then follows by the symmetry in \eqref{prob_symmetry}.
\end{proof}

\subsection{A coupling argument}
\label{sec:CFsec}

The second important ingredient in our proof of Theorem~\ref{thm:BBMtwo}
will be a coupling between $d(W_s,t-s)$ (the signed distance from 
a $\dim$-dimensional Brownian motion $W_s$ to 
$\Gamma_{t-s}$, which evolves according to (backwards in time) mean curvature flow)
and a one-dimensional Brownian motion, at least when $W_s$ is
close to $\Gamma_{t-s}$. The proof requires some regularity properties
of the mean curvature flow that we record in this subsection. These 
rest on the assumptions {\Ca}-{\Cc}.

We write 
$\dot{d}$ for the time derivative of $d$.
Let $T^*\in(0,\mathscr{T})$. 
In this case, we have:
\begin{enumerate}
	\item \label{property 1}
	There exists $c_0>0$ such that for all $t\in [0,T^*]$ and 
$x\in \{y:|d(y,t)|\leq c_0\}$, we have 
\begin{equation} \label{eq:unitgrad}
|\nabla d(x,t)|=1.
\end{equation}
Moreover, $d$ is a $C^{\alpha,\frac{\alpha}{2}}$ function in $\{(x,t): |d(x,t)|\leq c_0, t\leq T^*\}$.
\item Viewing $\v{n}=\nabla d$ as the positive normal direction, 
for $x\in \Gamma_t$, the normal velocity of $\Gamma_t$ at 
$x$ is $-\dot{d}(x,t)$, and the curvature of $\Gamma_t$ at $x$ is 
$-\Delta d(x,t)$. Thus,~\eqref{eq:cf_pre} becomes
\begin{equation} \label{eq:cf}
\dot{d}(x,t)=\Delta d(x,t)
\end{equation}
for all $x$ such that $d(x,t)=0$.
\item There exists $C_0>0$ such that for all $t\in[0,T^*]$ and $x$ such that $|d(x,t)|\leq c_0$,
\begin{equation} \label{eq:smoothcf}
\l|\nabla \l(\dot{d}(x,t)-\Delta d(x,t)\r)\r|\leq C_0. 
\end{equation}
\item There exist $v_0,V_0>0$ such that for all $t\in[0,T^*-v_0]$ and all $s\in [t,t+v_0]$,
\begin{equation} \label{eq:t_lipschitz}
|d(x,t)-d(x,s)|\leq V_0(s-t).
\end{equation}
\end{enumerate}
Properties 1 and 2 above come from \cite{chen:1992} (equations~(2.9),~(2.10) and 
Proposition~2.1) and 3 and 4 follow easily from the fact that 
$\sup_{u\in S^1,t\leq T^*}|\Gamma_t(u)|<\infty$ and the regularity of 
$d$ provided by 1.

The first property means that, for each $t\geq 0$, the region 
$\{x:d(x,t)\leq c_0\}$ is not self-intersecting i.e.~for each 
$x$ it contains, the ball $\{z:|z-x|\leq d(x,t)\}$ intersects 
$\Gamma_t$ at precisely one point.
Evidently this cannot hold, for example, as the flow collapses to a
point, which is why we work up to time $T^*<\mathscr{T}$.
Broadly speaking, the first two properties characterize mean
curvature flow in terms of the function $d$. 

A key ingredient of our proof of Theorem \ref{thm:BBMtwo} is the following coupling argument. 

\begin{prop} \label{prop:coupling1} 
Let $(W_s)_{s \geq 0}$ denote a $\dim$-dimensional Brownian motion
started at $x\in \R^{\dim}$. Suppose that $t\leq T^*$, $\beta \leq c_0$ 
and let 
$$T_\beta = \inf \l(\{ s \in [0,t):|d(W_s,t-s)|\geq \beta \}\cup \{t\}\r).$$
Then we can couple $(W_s)_{s \geq 0}$ with a one-dimensional 
Brownian motion $(B_s)_{s\geq 0}$ started from $z=d(x,t)$ in such a way 
that for $s\leq T_\beta$,
$$B_s-C_0\beta s\leq d\l(W_s, t-s\r)\leq B_s+C_0\beta s. $$
\end{prop}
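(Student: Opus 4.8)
The plan is to apply It\^o's formula to the real-valued process $Y_s := d(W_s, t-s)$, then split the result into a martingale part, which I will show is a one-dimensional Brownian motion, and a bounded-drift part, which I will control using the characterisation of the flow in terms of $d$ recorded above. Writing $f(x,s) := d(x,t-s)$ and using the convention~\eqref{eq:bm_rate_2} that each coordinate of $W$ accumulates quadratic variation at rate $2$, It\^o's formula gives, for $s\le T_\beta$,
$$Y_s = z + \int_0^s \nabla d(W_u,t-u)\cdot dW_u + \int_0^s\l(\Delta d(W_u,t-u)-\dot d(W_u,t-u)\r)du,$$
where I have used $\partial_s f = -\dot d$ and $Y_0 = d(x,t) = z$. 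For $u\le T_\beta$ we have $|d(W_u,t-u)|\le\beta\le c_0$, so property~\eqref{eq:unitgrad} ensures $\nabla d(W_u,t-u)$ is a unit vector and all coefficients are well defined. To justify It\^o's formula rigorously I would first replace $d$ by a globally $C^{2,1}$ extension agreeing with it on $\{|d|\le c_0,\ \cdot\le T^*\}$, which changes nothing before $T_\beta$ and is legitimate since property~\ref{property 1} provides the needed regularity when $\alpha>3$.

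For the martingale term $M_s := \int_0^s \nabla d(W_u,t-u)\cdot dW_u$, the unit-gradient property forces $\langle M\rangle_s = 2s$ for $s\le T_\beta$. To produce a genuine one-dimensional Brownian motion I would enlarge the probability space to carry an independent rate-$2$ Brownian motion $\tilde B$ and set
$$B_s := z + \int_0^{s\wedge T_\beta}\nabla d(W_u,t-u)\cdot dW_u + \l(\tilde B_s - \tilde B_{s\wedge T_\beta}\r).$$
This is a continuous local martingale with $\langle B\rangle_s = 2s$ (the cross term vanishing by independence), so L\'evy's characterisation identifies $B$ as a rate-$2$ Brownian motion started at $z = d(x,t)$, and $B_s = z + M_s$ for every $s\le T_\beta$.

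It remains to bound the drift, and this is where I expect the main work to lie. Set $\psi(x,\tau):=\dot d(x,\tau)-\Delta d(x,\tau)$; property~\eqref{eq:cf} gives $\psi(x,\tau)=0$ whenever $d(x,\tau)=0$, and property~\eqref{eq:smoothcf} gives $|\nabla\psi(x,\tau)|\le C_0$ on $\{|d|\le c_0\}$. For $x$ with $0<|d(x,\tau)|\le c_0$, the non-self-intersection in property~\ref{property 1} supplies a unique nearest point $x_0$ with $d(x_0,\tau)=0$; the segment from $x_0$ to $x$ runs along the normal direction $\nabla d$ with $|d|$ increasing monotonically from $0$ to $|d(x,\tau)|$, so it stays inside $\{|d|\le c_0\}$, and a mean-value estimate along it yields $|\psi(x,\tau)| = |\psi(x,\tau)-\psi(x_0,\tau)|\le C_0|x-x_0| = C_0|d(x,\tau)|$. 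Applying this with $\tau=t-u$ for $u\le T_\beta$, where $|d(W_u,t-u)|\le\beta$, gives $|\psi(W_u,t-u)|\le C_0\beta$, whence
$$|Y_s - B_s| = \l|\int_0^s\psi(W_u,t-u)\,du\r|\le C_0\beta s,\qquad s\le T_\beta,$$
which rearranges to the asserted two-sided bound. The delicate point is the geometric claim that the normal segment never leaves the tubular neighbourhood, so that the gradient bound~\eqref{eq:smoothcf} applies all along it; the remaining care is the more routine extension and localisation needed to legitimise It\^o's formula and L\'evy's characterisation up to the stopping time $T_\beta$.
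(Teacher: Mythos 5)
Your proposal is correct and follows essentially the same route as the paper's proof: apply It\^o's formula to $d(W_s,t-s)$, bound the drift $\dot d-\Delta d$ by $C_0\beta$ via a mean-value estimate along the normal segment to the nearest point of $\v\Gamma_{t-s}$ using~\eqref{eq:cf} and~\eqref{eq:smoothcf}, and identify the martingale part as a rate-$2$ Brownian motion by L\'evy's characterisation using~\eqref{eq:unitgrad}. The extra care you take (a global $C^{2,1}$ extension to legitimise It\^o, and appending an independent Brownian motion after $T_\beta$ so that $(B_s)_{s\geq 0}$ is a genuine Brownian motion rather than a stopped one) only makes explicit points the paper leaves implicit.
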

\begin{proof}
By It\^{o}'s formula, we have that for $s\leq t$
$$ d\l(W_s,t-s\r)=\int_0^s A_u\,du+B_s,$$
where 
\begin{align*}
A_u&=-\dot{d}\l(W_u,t-u\r)+\Delta d\l(W_u,t-u\r)\\
B_s&=\sum\limits_{i=1}^\dim\int_0^s \frac{\p }{\p x_i}d(W_u,t-u)dW_u^{(i)}.
\end{align*}
We will handle $A_u$ and $B_s$ in turn.

For each $u\in[0,T_\beta]$ there exists some $x_u\in\R^{\dim}$ such that $|x_u-W_u|\leq\beta$, and $d(x_u,t-u)=0$. By~\eqref{eq:cf} we have $-\dot{d}(x_u,t-u)+\Delta d(x_u,t-u)=0$. Since $\beta\leq c_0$, by~\eqref{eq:smoothcf} we have that, for $x$ on the line segment connecting $x_u$ to $W_u$, the gradient of $-\dot{d}(x,t-u)+\Delta d(x,t-u)$ is bounded by $C_0$. We thus obtain
$$|A_u|\leq C_0\beta.$$
Since $\beta\leq c_0$, it follows by~\eqref{eq:unitgrad} and L\'{e}vy's characterisation (recall that our Brownian motions run at rate $2$) that $(B_s)_{0\leq s\leq T_\beta}$ is a (stopped) Brownian Motion. This completes the proof.
\end{proof}

\begin{remark}
Proposition~\ref{prop:coupling1} provides a probabilistic parallel to one of the key tools used in the classical study of (mean) curvature flow; approximating the movement of the interface locally (in space and time) by a particular one dimensional standing wave.  
\end{remark}

\subsection{Majority voting in BBM, for $\dim\geq2$} 
\label{sec:BBMtwo}

Recall the notation introduced in Section~\ref{subsec:dual_bbm} for 
ternary branching Brownian motion in dimension $\dim \geq 2$. 
For $x\in \R^\dim $, we write $\P ^\epsilon _x$ for the probability measure under which $(\v{W}(t),t \geq 0)$ has the law of ternary branching Brownian motion in $\R^\dim $ with branching rate $1/\epsilon ^2$ started from a single particle at location $x$ at time $0$.
We use $\E ^\epsilon _x$ for the corresponding expectation.
We also write $P_x$ for the probability measure under which $(W_t)_{t \geq 0}$ has the law of a $\dim$-dimensional Brownian motion started at $x$,
and $E_x$ for the corresponding expectation. As usual the notation $B$ (resp.~$\v{B}$) refers to a one dimensional (historical branching) Brownian motion and 
$W$ and $\v{W}$ signal dimension $\dim\geq 2$. 


The proof of Theorem~\ref{thm:BBMtwo} is in two parts. First, in 
Section~\ref{sec:d=2_generation} we establish that the interface 
is generated in a time $\delta_{\dim}=\mc{O}(\epsilon^2|\log\epsilon|)$. 
We then, in Section~\ref{sec:d=2_propagation}, use 
Proposition~\ref{prop:coupling1} and Theorem~\ref{thm:BBMone} 
to investigate how the region
around the interface propagates. In order not to interrupt the flow of the
proof of Theorem~\ref{thm:BBMtwo}, the proof of a central lemma is 
deferred to Section~\ref{sec:d=2_propagation_technical}.

Our proof rests on a comparison with the outcome $\Vote(\v{B}(t))$ of 
majority voting
for the one-dimensional historical branching Brownian motion. 
In one dimension we always 
implicitly take
$\Vote = \Vote_{p_0}$ with
 $p_0(x)=\1\{x\geq 0\}$. We reserve the subscript $p$ for
$\Vote_p(\v{W}(t))$ and we assume that $p$ satisfies~{\Ca}-{\Cc}. 

\subsubsection{Generation of the interface}
\label{sec:d=2_generation}

In this section we prove that, as in $\dim=1$, in dimension $\dim\geq 2$ an interface of width $\mc{O}(\epsilon|\log\epsilon|)$ is generated in time $\mc{O}(\epsilon^2|\log\epsilon|)$.

\begin{prop} \label{prop:d=2_generation}
Let $k\in\N$. Then there exist $\epsilon_{\dim}(k),a_{\dim}(k),b_{\dim}(k)>0$ such that for all $\epsilon\in(0,\epsilon_{\dim})$, if we set
\begin{equation}\label{eq:delta2}
\delta_{\dim}(k,\epsilon) := a_{\dim}(k)\epsilon^2|\log\epsilon| \quad
\textrm{ and }\quad \delta'_{\dim}(k,\epsilon) 
:= (a_{\dim}(k)+k+1)\epsilon^2|\log\epsilon|,
\end{equation}
then for $t\in [\delta_{\dim}, \delta'_{\dim}]$,
\begin{enumerate}
\item for $x$ such that $d(x,t)\geq b_{\dim}\epsilon |\log \epsilon|$, we have $\P^\epsilon_x\l[\Vote _p(\v{W}(t))=1\r]\geq 1-\epsilon^k$;
\item for $x$ such that $d(x,t)\leq -b_{\dim}\epsilon |\log \epsilon|$, we have $\P^\epsilon_x\l[\Vote _p (\v{W}(t))=1\r]\leq \epsilon^k$.
\end{enumerate}
\end{prop}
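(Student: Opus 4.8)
The plan is to prove statement 1 only; statement 2 then follows by the same argument with the roles of the votes $0$ and $1$ interchanged, using the $0\leftrightarrow1$ symmetry $g(1-p_1,1-p_2,1-p_3)=1-g(p_1,p_2,p_3)$ from~\eqref{gantisym}. The key observation is that on the timescale $\delta'_\dim=\mc O(\epsilon^2|\log\epsilon|)$ the whole branching Brownian motion stays in a ball of radius $\mc O(\epsilon|\log\epsilon|)$ about $x$, the interface has moved only $\mc O(\epsilon^2|\log\epsilon|)$, and so \emph{every} leaf votes $1$ with a bias that is bounded below uniformly in $i$; successive rounds of majority voting then amplify this bias exactly as in dimension one. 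In particular, neither Proposition~\ref{prop:coupling1} nor Theorem~\ref{thm:BBMone} is needed here --- only the lemmas underlying the latter, which are about generation rather than propagation.

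First I would fix $x$ with $d(x,t)\geq b_\dim\epsilon|\log\epsilon|$ and pass from $d(\cdot,t)$ to $d(\cdot,0)$: since $t\leq\delta'_\dim$, property~\eqref{eq:t_lipschitz} gives $|d(x,t)-d(x,0)|\leq V_0t=\mc O(\epsilon^2|\log\epsilon|)$, so $d(x,0)\geq b_\dim\epsilon|\log\epsilon|-\mc O(\epsilon^2|\log\epsilon|)>0$ and $x$ lies outside $\Gamma$. Applying a version up to time $\delta'_\dim$ of Lemma~\ref{all_particles_bound} produces a constant $d_\dim$ and an event $G$ of probability at least $1-\epsilon^k$ on which $|W_i(t)-x|\leq d_\dim\epsilon|\log\epsilon|$ for every leaf $i$. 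Choosing $b_\dim=d_\dim+1$, on $G$ every leaf lies on the same (positive) side of $\Gamma$ as $x$, with distance to $\Gamma$ at least $\tfrac12\epsilon|\log\epsilon|$ for $\epsilon$ small: when $\dist(x,\Gamma)\leq c_0$ this uses $|\nabla d|=1$ from~\eqref{eq:unitgrad}, so $d(\cdot,0)$ is $1$-Lipschitz there, while when $\dist(x,\Gamma)>c_0$ it uses that the unsigned distance to $\Gamma$ is always $1$-Lipschitz and that a displacement of $\mc O(\epsilon|\log\epsilon|)$ cannot carry a particle across $\Gamma$. In either regime condition~\Cc forces $p(W_i(t))-\tfrac12\geq\epsilon':=c\,\epsilon|\log\epsilon|$ for a fixed $c>0$ and every leaf $i$.

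It then remains to show that the tree has enough depth to amplify this bias. By Lemma~\ref{ternary_tree}, choosing $a_\dim$ (hence $\delta_\dim$) large enough, there is an event $H$ of probability at least $1-\epsilon^k$ on which $\mc T(\v W(t))\supseteq\mc T^{reg}_N$ with $N=\lceil 2A(2k)|\log\epsilon|\rceil$. The events $G$ and $H$ are measurable with respect to the tree and the leaf positions, so conditionally on these the leaf votes are independent $\mathrm{Bernoulli}(p(W_i(t)))$. On $G\cap H$, inequality~\eqref{hpos} (which gives $g(p_1,p_2,p_3)\geq\min_i p_i$ when all $p_i\geq\tfrac12$) shows by induction that the subtree hanging below each of the $3^N$ level-$N$ vertices of $\mc T^{reg}_N$ votes $1$ with conditional probability at least $\tfrac12+\epsilon'$, and these $3^N$ subtree-votes are conditionally independent. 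Applying $N$ rounds of ternary majority up the regular tree and using monotonicity of $g$, the root votes $1$ with conditional probability at least $g^{(N)}(\tfrac12+\epsilon')$. Finally Lemma~\ref{g_iteration}, applied with exponent $2k$, gives $g^{(N)}(\tfrac12+\epsilon')\geq 1-(\epsilon')^{2k}\geq 1-\epsilon^k$ for small $\epsilon$, since $N\geq 2A(2k)|\log\epsilon|\geq A(2k)|\log\epsilon'|$ and $(\epsilon')^{2k}=(c\epsilon|\log\epsilon|)^{2k}\leq\epsilon^k$. Integrating over $G\cap H$ then yields $\P^\epsilon_x[\Vote_p(\v W(t))=1]\geq(1-\epsilon^k)\,\P(G\cap H)\geq 1-3\epsilon^k$, and the constant $3$ is absorbed by relabelling $k$.

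I expect the main obstacle to be the second paragraph, namely guaranteeing that on $G$ every leaf lies strictly on the correct side of $\Gamma$ with a uniform-in-$i$ lower bound on $p(W_i(t))-\tfrac12$: this is where the two regimes ($\dist(x,\Gamma)$ small versus large) must be treated separately, because the identity $|\nabla d|=1$ only holds in the tube $\{|d|\leq c_0\}$, and where the constants $b_\dim$ and $d_\dim$ must be reconciled. The subsequent amplification is routine bookkeeping, identical in spirit to the proof of Theorem~\ref{thm:BBMone}; the only difference is that the leaf bias here is of order $\epsilon|\log\epsilon|$ rather than $\epsilon$, which is harmless and, if anything, slightly favourable.
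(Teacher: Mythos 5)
Your proposal is correct and follows essentially the same route as the paper's own proof: a $\dim$-dimensional analogue of Lemma~\ref{all_particles_bound} to confine the leaves, the time-Lipschitz bound~\eqref{eq:t_lipschitz} to pass from $d(\cdot,t)$ to $d(\cdot,0)$, conditions {\Cb}--{\Cc} to get a uniform voting bias at every leaf, and tree containment (Lemma~\ref{ternary_tree}) plus Lemma~\ref{g_iteration} with the $g\geq\min$ induction to amplify that bias, exactly as in Theorem~\ref{thm:BBMone}. The only deviations are cosmetic: the paper lower-bounds the leaf bias by $\epsilon$ and applies Lemma~\ref{g_iteration} with exponent $k$ (rather than retaining the bias $c\,\epsilon|\log\epsilon|$ and using exponent $2k$ and a deeper regular tree), and it takes $b_\dim=2d_\dim$ rather than $d_\dim+1$.
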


\begin{proof}
By the same argument as for
Lemma~\ref{ternary_tree}, given $k\in\N$, 
and taking $A(k)$ from Lemma~\ref{g_iteration}, there exist $a_{\dim}(k)$ 
and $\epsilon_{\dim}(k)>0$ such that, for all $\epsilon\in(0,\epsilon_{\dim})$ 
and $t\geq a_{\dim}\epsilon^2|\log \epsilon|$,
\begin{equation}\label{eq:ternary_tree_2d}
\P^\epsilon\l[\mc{T}(\v W (t))\supseteq \mc{T}^{reg}_{A(k)|\log\epsilon|}\r]\geq 1-\epsilon^k.
\end{equation}
It is also easy to obtain a $\dim$-dimensional equivalent of 
Lemma~\ref{all_particles_bound}, with essentially the same proof 
(using a tail bound on a $\dim$-dimensional normal 
distribution instead of one dimensional). That is, given $k\in\N$, there 
exist $d_{\dim}(k)$, $\epsilon_{\dim}(k)$ such that for all $\epsilon \in (0,\epsilon_\dim)$, for 
$t\in [\delta_{\dim}, \delta'_{\dim}]$, 
\begin{equation}\label{eq:all_particles_bound_2d}
\P^\epsilon _x\l[\exists i\in N(t): |W_i(t)-x|\geq d_{\dim}\epsilon|\log\epsilon|\r]\leq \epsilon^k.
\end{equation}
We set $b_{\dim}(k)=2d_{\dim}(k)$.

By~\eqref{eq:t_lipschitz} there exist $v_0,V_0>0$ such that for 
$t\leq v_0$, and any $x\in \R^{\dim}$, we have $|d(x,0)-d(x,t)|\leq V_0t.$ 
Reducing $\epsilon_{\dim}$ if necessary, for 
$\epsilon \in (0,\epsilon_{\dim})$ we have $\delta'_{\dim} \leq v_0$.
Thus, if $\epsilon\in(0,\epsilon_{\dim})$, $t\in [\delta_\dim, \delta'_\dim]$ 
and $x$ is such that $d(x,t)\geq b_{\dim}\epsilon |\log \epsilon |$ and 
$|W_i(t)-x|\leq d_{\dim} \epsilon |\log \epsilon |$ then combining with the 
triangle inequality and~(\ref{eq:t_lipschitz}),
\begin{align*}
d\l(W_i(t),0\r)&\geq d\l(x,t\r)-|d\l(x,t\r)-d\l(W_i(t),t\r)|-|d\l(W_i(t),t\r)-d\l(W_i(t),0\r)|\\
&\geq b_{\dim}\epsilon |\log \epsilon|-d_{\dim}\epsilon |\log \epsilon|-V_0\delta '_\dim\\
&=\frac{1}{2}b_{\dim}\epsilon |\log \epsilon|-V_0(a_{\dim}+k+1)\epsilon^2 |\log \epsilon|.
\end{align*}
Therefore, reducing $\epsilon_{\dim}$ if necessary, in this case we have that
\begin{equation*}
d(W_i(t),0)\geq\tfrac{1}{4}b_{\dim}\epsilon |\log \epsilon |.
\end{equation*}
Applying {\Cb} and {\Cc},
\begin{align}
p(W_i(t))
&\geq \tfrac{1}{2}+\gamma\l(\tfrac{1}{4}b_{\dim} \epsilon |\log \epsilon | \wedge r\r)\notag\\
&\geq \tfrac{1}{2}+\epsilon, \label{eq:p_control}
\end{align}
where we again reduce $\epsilon_{\dim}>0$ (if necessary), to ensure that $\epsilon<\gamma r$, $\epsilon<\frac{\gamma}{4}b_{\dim}\epsilon|\log\epsilon|$ for $\epsilon\in(0,\epsilon_{\dim})$. 

Exactly as in the proof of Theorem~\ref{thm:BBMone},
we can now combine~(\ref{eq:ternary_tree_2d}), 
(\ref{eq:all_particles_bound_2d}) and~(\ref{eq:p_control})
to deduce that for
$\epsilon\in(0,\epsilon_{\dim})$, $t\in [\delta_\dim,\delta'_\dim]$ and 
$x$ such that $d(x,t)\geq b_{\dim}\epsilon|\log \epsilon|$,
$$\P^\epsilon _x\l[\Vote_p(\v W (t))=1\r] \geq 1-3\epsilon^k.$$
The proof of the second statement is analogous.
\end{proof}

\subsubsection{Propagation of the interface and proof of 
Theorem~\ref{thm:BBMtwo}}
\label{sec:d=2_propagation}

We now turn to the propagation of the interface region.
Our immediate goal is to establish that, for suitably chosen (large) $K_1$ and $K_2$, and for all sufficiently small $\epsilon>0$ we have
$$\P^\epsilon_x\l[\Vote_p(\v W (t))=1\r]\approx \P^\epsilon_{d(x,t)+K_1e^{K_2 t}\epsilon|\log\epsilon|}\l[\Vote(\v B (t))=1\r].$$
This connection between $\v{B}$ and $\v{W}$ is made precise by the following result.

\begin{prop} \label{prop:contra}
Let $l\in\N$ with $l\geq 4$. 
Define $a_\dim(l)$ and $\delta_\dim (l,\epsilon)$ as in Proposition~\ref{prop:d=2_generation}.
There exist $K_1(l),K_2(l)>0$ and $\epsilon_{\dim}(l, K_1, K_2)>0$ such that for all $\epsilon\in(0,\epsilon_{\dim})$ and $t\in[\delta_{\dim}(l,\epsilon),T^*]$ we have 
\begin{equation}\label{eq:upper_final}
\sup\limits_{x\in\R^{\dim}}\Big(\P^\epsilon_x\l[\Vote_p(\v W (t))=1\r]-\P^\epsilon_{d(x,t)+K_1e^{K_2 t}\epsilon|\log\epsilon|}\l[\Vote(\v B(t))=1\r]\Big)\leq \epsilon^l
\end{equation}
and 
\begin{equation}\label{eq:lower_final}
\sup\limits_{x\in\R^{\dim}}\Big(\P^\epsilon_x\l[\Vote_p(\v W (t))=0\r]-\P^\epsilon_{d(x,t)-K_1e^{K_2 t}\epsilon|\log\epsilon|}\l[\Vote(\v B(t))=0\r]\Big)\leq \epsilon^l.
\end{equation}
\end{prop}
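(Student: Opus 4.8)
The plan is to prove the upper bound~\eqref{eq:upper_final}; the lower bound~\eqref{eq:lower_final} then follows by applying the same argument after reflecting the sign convention {\Cb} (equivalently, replacing $p$ by $1-p$, which swaps the two sides of $\Gamma$, sends $d\mapsto -d$, and, by the vote-flip symmetry behind~\eqref{gantisym}, interchanges the votes $0$ and $1$). Throughout I write $V(t,x)=\P^\epsilon_x[\Vote_p(\v W(t))=1]$ and $U(t,z)=\P^\epsilon_z[\Vote(\v B(t))=1]$ and set $\psi(t)=K_1e^{K_2t}\epsilon|\log\epsilon|$, so the target reads $V(t,x)\le U(t,d(x,t)+\psi(t))+\epsilon^l$. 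The two monotonicities that drive everything are that $U(t,\cdot)$ is nondecreasing (by~\eqref{eq:monotonicity}) and that the voting map $g$ of~\eqref{g_defn} is coordinatewise nondecreasing; these are what allow a spatial shift of the one-dimensional walk to dominate the $\dim$-dimensional vote.

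\textbf{The exponential shift.} The form of $\psi$ is dictated by the curvature constant $C_0$ of~\eqref{eq:smoothcf}: I take $K_2\ge C_0$ and $K_1\ge b_\dim(l)+c_1(l)$. Along any lineage, Proposition~\ref{prop:coupling1} with tube width $\beta=\psi(t)$ shows that $d(W_s,t-s)$ is a Brownian motion up to a drift of size at most $C_0\beta s$; the choice $K_2\ge C_0$ guarantees $\psi(t+\delta)-\psi(t)\ge C_0\psi(t)\delta$, so that each increment of the shift exactly absorbs the drift accrued over that increment. This is the discrete Gronwall feedback responsible for the factor $e^{K_2t}$, and it forces $\beta=\psi(t)\le c_0$ for small $\epsilon$ (so Proposition~\ref{prop:coupling1} applies) while keeping $\beta\ge b_\dim\epsilon|\log\epsilon|$ (so that points outside the tube have decided votes).

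\textbf{Carrying out the comparison.} I would advance the bound in steps of length $\delta_\dim$. The base case $t=\delta_\dim$ is immediate from Proposition~\ref{prop:d=2_generation} and Theorem~\ref{thm:BBMone}: if $d(x,\delta_\dim)\le-b_\dim\epsilon|\log\epsilon|$ then $V\le\epsilon^l$ and the bound is trivial, while if $d(x,\delta_\dim)\ge-b_\dim\epsilon|\log\epsilon|$ then the choice of $K_1$ forces $d(x,\delta_\dim)+\psi(\delta_\dim)\ge c_1\epsilon|\log\epsilon|$, so Theorem~\ref{thm:BBMone} gives $U\ge1-\epsilon^l$. For the step from $t$ to $t+\delta_\dim$, decompose $\v W(t+\delta_\dim)$ by the Markov property at time $\delta_\dim$: $V(t+\delta_\dim,x)$ is the average over the increment tree $\mc T(\v W(\delta_\dim))$ and the positions $(W_i(\delta_\dim))_{i\in N(\delta_\dim)}$ of the majority vote assembled from the subtree votes $V(t,W_i(\delta_\dim))$. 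I couple this increment tree, lineage by lineage and consistently at branch points (the martingale parts of $d(W,\cdot)$ furnish a genuine one-dimensional branching Brownian motion on the same genealogy), with one started at $d(x,t+\delta_\dim)+\psi(t+\delta_\dim)$. For lineages that stay in the $\psi(t)$-tube, the drift bound and the choice of $K_2$ give $B_i(\delta_\dim)\ge d(W_i(\delta_\dim),t)+\psi(t)$, and the two monotonicities together with the comparison at time $t$ yield $V(t,W_i(\delta_\dim))\le U(t,B_i(\delta_\dim))$; assembling through the shared tree reproduces $U(t+\delta_\dim,d(x,t+\delta_\dim)+\psi(t+\delta_\dim))$.

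\textbf{Main obstacle.} Two points make up the central lemma deferred to Section~\ref{sec:d=2_propagation_technical}. The easier one is lineages leaving the tube within an increment, where Proposition~\ref{prop:coupling1} fails: the $\dim$-dimensional analogue~\eqref{eq:all_particles_bound_2d} of the displacement bound shows that over one step no particle moves more than $d_\dim\epsilon|\log\epsilon|$ except with probability $\epsilon^k$, so (with $K_1$ large and~\eqref{eq:t_lipschitz} controlling the motion of $\Gamma$) a lineage well inside the tube cannot exit, while one starting outside it begins where the target already holds --- on the far side because $d+\psi\ge c_1\epsilon|\log\epsilon|$ forces $U\approx1$, and on the near-inside because the subtree votes are already $\le\epsilon^l$ and majority voting only shrinks them. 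The genuinely delicate point is that the comparison must be made \emph{conditionally on the realised tree}, in the spirit of Lemma~\ref{no_demagnification} and Proposition~\ref{prop:deriv1}, rather than by a union bound over leaves: since $\v W(t)$ has super-polynomially many leaves, a per-leaf error of size $\epsilon^l$ cannot be afforded, whereas the only union bounds one actually needs --- over the $\epsilon^{-\mc O(1)}$ particles present in a single increment, for the displacement and regular-subtree estimates --- are harmless. Making precise the sense in which the coupling, fed into these structural monotonicities, transports the \emph{clean} one-dimensional bound of Theorem~\ref{thm:BBMone} to the $\dim$-dimensional process without ever incurring a leaf-wise error is the heart of the matter, and is exactly what I expect to cost the most work.
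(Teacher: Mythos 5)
Your overall strategy---couple $d(W_\cdot,t-\cdot)$ with a one-dimensional Brownian motion via Proposition~\ref{prop:coupling1} and use an exponentially growing shift, together with the monotonicities \eqref{eq:monotonicity} and of $g$, to dominate the $\dim$-dimensional vote by a shifted one-dimensional one---is the right one, and your base case agrees with the paper's treatment of $t\in[\delta_\dim,\delta'_\dim]$. But there is a genuine gap at exactly the point you flag as ``the heart of the matter,'' and the mechanism needed to close it is incompatible with your setup. The problem is quantitative: your inductive hypothesis at time $t$ carries an additive error $\epsilon^l$, so after coupling, each leaf of the increment tree satisfies only $V(t,W_i(\delta_\dim))\le U(t,B_i(\delta_\dim))+\epsilon^l$ in your notation, not a clean bound. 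Assembling through the shared tree is indeed monotone (so no union bound over leaves is needed), but assembly is not a contraction near the interface: $g'(1/2)=3/2>1$, and an increment tree of length $\delta_\dim=a_\dim\epsilon^2|\log\epsilon|$ has $\Theta(|\log\epsilon|)$ voting generations per lineage (branching rate $\epsilon^{-2}$), so a uniform leaf error $\epsilon^l$ can be amplified at the root to order $(3/2)^{a_\dim|\log\epsilon|}\epsilon^l=\epsilon^{l-a_\dim\log(3/2)}$, which destroys the induction. Your choice $K_2\ge C_0$ makes this unfixable: it only just absorbs the coupling drift $C_0\beta s$, leaving no surplus with which to beat the inherited $\epsilon^l$.

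The paper's proof is organised around precisely this difficulty and resolves it with two ingredients absent from your proposal. First, $K_2$ is fixed by \eqref{eq:K_2_cond}, $K_1(K_2-C_0)-C_0R=c_1(1)$, so the shift grows strictly faster than is needed to absorb drift: by \eqref{eq:K2_conseq} the surplus accrued over time $s$ is at least $c_1(1)s\epsilon|\log\epsilon|$. Through the interface-slope lower bound of Corollary~\ref{lem:high_deriv}, this surplus converts into a decrement of at least $s/48$ in the one-dimensional vote probability, which swallows the $+\epsilon^l$ sitting inside the argument of $g$ whenever that probability is within $5/12$ of $1/2$ (the regime where $g$ expands; the case of very small $s$ is covered by the indicator term in Lemma~\ref{lem:keylemma_2}). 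Away from $1/2$ one instead uses the local Lipschitz bound \eqref{eq:p_deriv_key}, $g(p+\delta)\le g(p)+\tfrac{2}{3}\delta$. Second, the paper never assembles a whole increment tree: it argues by contradiction at the first time $T'$ at which \eqref{eq:upper_final} fails and decomposes $\v{W}(T)$ at its \emph{first branching event only}, using minimality of $T'$ to apply the error-$\epsilon^l$ bound to the three subtrees. Then only a single application of $g$ must be controlled, and the one-step error map is a strict contraction, $\epsilon^l\mapsto\tfrac{3}{4}\epsilon^l+4\epsilon^{l+1}\le\tfrac{7}{8}\epsilon^l$, which yields the contradiction. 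Your fixed-step iteration would have to run this absorption argument at every one of the $\Theta(|\log\epsilon|)$ voting rounds inside each step, which effectively collapses back to the paper's one-event-at-a-time structure; as written, with $K_2\ge C_0$ and no use of Corollary~\ref{lem:high_deriv} or of the contraction of $g$ away from $1/2$, the induction cannot close.
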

The proof of Theorem~\ref{thm:BBMtwo}, which follows easily from Proposition \ref{prop:contra}, is at the end of this subsection. 

Recall that $g:[0,1]\to [0,1]$ is given by $g(p)=3p^2-2p^3$.
It is convenient to extend this definition to a continuous, monotone function $g:\R\to [0,1]$ as follows: 
\begin{equation} \label{g_defn_ext}
g(p)= 
\begin{cases} 0 &\mbox{if } p <0 \\
3p^2-2p^3 &\mbox{if } p \in [0,1] \\
1 & \mbox{if } p>1. \end{cases} 
\end{equation}
At the heart of the proof of Proposition~\ref{prop:contra} 
is the following lemma, whose proof we defer 
to Section~\ref{sec:d=2_propagation_technical}.
\begin{lemma}\label{lem:keylemma_2}
Let $l \in \N$ with $l\geq 4$ and $K_1>0$. There exists $K_2=K_2(K_1,l)>0$ 
and $\epsilon_{\dim}(l,K_1,K_2)>0$ such that for all 
$\epsilon\in(0,\epsilon_{\dim})$, $x\in\R^{\dim}$, 
$s\in [0,(l+1)\epsilon^2 |\log \epsilon |]$ and $t\in[s,T^*]$,
\begin{align}
&E_x\l[g\l(\P^\epsilon_{d(W_s,t-s)+K_1e^{K_2 (t-s)}\epsilon |\log \epsilon |}[\Vote(\v B(t-s))=1]+\epsilon ^l\r) \r]\notag\\
&\hspace{8pc}\leq \tfrac{3}{4}\epsilon^l+E_{d(x,t)}\l[g\l(\P^\epsilon_{B_s +K_1 e^{K_2 t}\epsilon |\log \epsilon |}[\Vote(\v B(t-s))=1]\r)\r]
+\1_{s\leq \epsilon ^3} \epsilon ^l \label{eq:keylemma_2eq}
\end{align}
and
\begin{align}
&E_x\l[g\l(\P^\epsilon_{d(W_s,t-s)-K_1e^{K_2 (t-s)}\epsilon |\log \epsilon |}[\Vote(\v B(t-s))=0]+\epsilon ^l\r) \r]\notag\\
&\hspace{8pc}\leq \tfrac{3}{4}\epsilon^l+E_{d(x,t)}\l[g\l(\P^\epsilon_{B_s -K_1 e^{K_2 t}\epsilon |\log \epsilon |}[\Vote(\v B(t-s))=0]\r)\r]
+\1_{s\leq \epsilon ^3} \epsilon ^l. \label{eq:keylemma_2eq_opp}
\end{align}
\end{lemma}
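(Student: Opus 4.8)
The plan is to realise both expectations on one probability space via the coupling of Proposition~\ref{prop:coupling1}, and then to dominate the integrand on the left of \eqref{eq:keylemma_2eq} by the one on the right, pathwise, on a high-probability event. Write $F(z)=\P^\epsilon_z[\Vote(\v B(t-s))=1]$, which is non-decreasing by \eqref{eq:monotonicity}, and abbreviate $c_\tau=K_1e^{K_2\tau}\epsilon|\log\epsilon|$, so the two integrands are $g(F(d(W_s,t-s)+c_{t-s})+\epsilon^l)$ and $g(F(B_s+c_t))$. Since $c_{t-s}>0$, the shifted map $z\mapsto F(z+c_{t-s})$ makes its transition from values near $0$ to values near $1$ where $d(W_s,t-s)\approx-c_{t-s}$, i.e.\ at distance $\asymp c_{t-s}=\mc{O}(\epsilon|\log\epsilon|)$ from the true interface. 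First I would fix a large $m$, take $c_1(m)$ from Theorem~\ref{thm:BBMone}, choose $K_2$ large relative to the constant $C_0$ of \eqref{eq:smoothcf}, and run Proposition~\ref{prop:coupling1} with an \emph{adaptive} width $\beta=\beta(s,t)\asymp c_{t-s}$, chosen large enough that this shifted transition region sits well inside the tube $\{|d(\cdot,t-s)|<\beta\}$, yet with $\beta\le c_0$ for small $\epsilon$.

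The heart of the matter is the pathwise domination on the good event $G=\{s\le T_\beta\}$. There the coupling gives $d(W_s,t-s)\le B_s+C_0\beta s$, and since $c_t-c_{t-s}=c_{t-s}(e^{K_2s}-1)\ge K_2c_{t-s}s$, taking $K_2>2C_0$ (so that $C_0\beta\le\tfrac12K_2c_{t-s}$) yields
\[
d(W_s,t-s)+c_{t-s}\ \le\ B_s+c_t-\tfrac12 K_2 c_{t-s}s,
\]
so the left argument of $F$ lies below the right one by a gap of order $c_{t-s}s$. Monotonicity of $F$ and $g$ then reduces everything to absorbing the additive $\epsilon^l$, and I would split according to the value of $F$. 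In the tails $F\le\tfrac1{12}$ or $F\ge\tfrac{11}{12}$ one has $g'=6p(1-p)\le\tfrac12$, so a Lipschitz estimate costs only $\tfrac12\epsilon^l$; in the central band $F\in[\tfrac1{12},\tfrac{11}{12}]$ — exactly the regime \eqref{eq:near_interface} where Corollary~\ref{lem:high_deriv} applies — the slope lower bound converts the argument gap into a gain of at least $\tfrac{\mathrm{gap}}{48c_1(1)\epsilon|\log\epsilon|}\gtrsim K_2s$ in $F$-value, which exceeds $\epsilon^l$ once $s\ge\epsilon^3$ (this is where $l\ge4>3$ enters), giving the clean bound $g(F(\cdot)+\epsilon^l)\le g(F(\cdot_{\mathrm{right}}))$ with no Lipschitz loss. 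For $s<\epsilon^3$ the gap is too small and one falls back on the crude Lipschitz constant $\tfrac32$ of $g$; this costs $\tfrac32\epsilon^l$, which is precisely why the extra $\1_{s\le\epsilon^3}\epsilon^l$ is present, as $\tfrac34+1\ge\tfrac32$.

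Off the tube I would argue separately. On $G^c$ I bound the left integrand by $1$; since the start lies inside a tube of width $\asymp c_{t-s}\gg\sqrt s$, a Gaussian tail estimate makes $\P(G^c)$ negligible, contributing at most $\tfrac14\epsilon^l$. For $|d(x,t)|>\tfrac12\beta$ (where $G$ is empty) both $d(x,t)+c_{t-s}$ and, with overwhelming probability, $B_s+c_t$ are pushed past $\pm c_1(m)\epsilon|\log\epsilon|$, so $F$ is within $\epsilon^m$ of $0$ or $1$ on both sides by Theorem~\ref{thm:BBMone}; the inequality follows by bounding the left integrand by $1$ on the positive side and by $\mc{O}(\epsilon^{2l})$ on the negative side, and bounding the right side below (resp.\ above) using the extremeness of $F$ and the flatness of $g$. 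Collecting the tail/central Lipschitz error ($\le\tfrac12\epsilon^l$, or $\tfrac32\epsilon^l$ when $s<\epsilon^3$), the escape probability ($\le\tfrac14\epsilon^l$), and the far-field terms reproduces the budget $\tfrac34\epsilon^l+\1_{s\le\epsilon^3}\epsilon^l$ exactly. The second inequality \eqref{eq:keylemma_2eq_opp} follows from the reflection symmetry \eqref{prob_symmetry}, which interchanges $\Vote=1$ with $\Vote=0$ and $d$ with $-d$.

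The main obstacle I expect is the central-band domination: the drift of $d(W_s,t-s)$ is controlled only proportionally to the distance from the interface (this is how $C_0$ enters Proposition~\ref{prop:coupling1}), so the comparison must be engineered so that the \emph{exponentially} growing shift $c_\tau=K_1e^{K_2\tau}\epsilon|\log\epsilon|$, with rate $K_2$ exceeding the geometric constant $C_0$, stays ahead of this distance-proportional drift throughout $[0,s]$, while the resulting gap remains large enough for Corollary~\ref{lem:high_deriv} to absorb the $\epsilon^l$ without a Lipschitz penalty; the failure of the latter for $s<\epsilon^3$ is exactly what forces the indicator correction. Simultaneously keeping $\beta\asymp c_{t-s}$ wide enough to contain the shifted transition yet narrow enough ($\beta\le c_0$, escape negligible) to validate the coupling is the delicate balance on which the whole estimate rests.
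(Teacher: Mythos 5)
Your proposal is correct and follows essentially the same route as the paper's proof: couple $W$ with $B$ via Proposition~\ref{prop:coupling1} on a tube of width $\Theta(\epsilon|\log\epsilon|)$, dispose of the far-field cases with Theorem~\ref{thm:BBMone} and Gaussian tail bounds, and in the central band use the fact that the exponentially growing shift beats the $C_0\beta s$ drift so that Corollary~\ref{lem:high_deriv} converts the resulting $\Theta(s\epsilon|\log\epsilon|)$ argument gap into an $F$-gain of order $s$, absorbing the $\epsilon^l$ except when $s\leq\epsilon^3$, where the crude Lipschitz bound on $g$ produces exactly the indicator term. The only differences are cosmetic bookkeeping (your $K_2>2C_0\lambda$ versus the paper's normalisation $K_1(K_2-C_0)-C_0R=c_1(1)$, and slightly different Lipschitz thresholds), and your symmetry reduction for \eqref{eq:keylemma_2eq_opp} is equivalent to the paper's rerun of the argument with the lower coupling bound $d(W_u,t-u)\geq B_u-C_0\beta u$.
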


\begin{proof}[Of Proposition~\ref{prop:contra}]
Take $K_1=b_\dim (l)+c_1(l)$ where $b_\dim$ is as defined in 
Proposition~\ref{prop:d=2_generation} and $c_1$ is as defined in 
Theorem~\ref{thm:BBMone}.
Let $K_2=K_2(K_1,l)$, as defined in Lemma~\ref{lem:keylemma_2}.
Take $\epsilon_\dim >0$ sufficiently small that 
Theorem~\ref{thm:BBMone}, Proposition~\ref{prop:d=2_generation} and Lemma~\ref{lem:keylemma_2} 
apply for $\epsilon \in (0,\epsilon_\dim)$.
We begin by observing that for $\epsilon \in (0, \epsilon_\dim)$, 
$t\in [\delta_\dim, \delta'_\dim]$ (where $\delta'_\dim$ is defined 
in~\eqref{eq:delta2}), and $x\in \R^\dim$,
\begin{equation}\label{eq:low_t}
\P^\epsilon_x\l[\Vote_p(\v W(t))=1\r]\leq\P^\epsilon_{d(x,t)+K_1e^{K_2 t}\epsilon|\log\epsilon|}\l[\Vote(\v B(t))=1\r]+ \epsilon^l.
\end{equation}
To see this, note that if $d(x,t)\leq - b_\dim(l)\epsilon |\log \epsilon |$, 
then by Proposition~\ref{prop:d=2_generation}, 
$\P^\epsilon_x\l[\Vote_p(\v W(t))=1\r]\leq \epsilon^l$. 
On the other hand, if $d(x,t)\geq - b_\dim(l)\epsilon |\log \epsilon |$, then $d(x,t)+K_1e^{K_2 t}\epsilon|\log\epsilon|\geq c_1(l)\epsilon |\log \epsilon |$, and so, by Theorem~\ref{thm:BBMone}, \eqref{eq:low_t} holds (since the right
hand side of \eqref{eq:low_t} is $\geq 1$).

We are left with the case $t\in [\delta'_\dim,T^*]$.
We assume, aiming for a contradiction, that 
there exists $t\in[\delta'_{\dim},T^*]$ such that, for some $x\in\R^\dim$, 
$$\P^\epsilon_x\l[\Vote_p(\v W (t))=1\r]-\P^\epsilon_{d(x,t)+K_1e^{K_2 t}\epsilon|\log\epsilon|}\l[\Vote(\v B(t))=1\r]
>\epsilon^l.$$
Let $T'$ be the infimum of the set of such $t$. Choose 
\begin{equation} \label{eq:star_contra}
T\in [T',\min(T'+\epsilon^{l+3},T^*)]
\end{equation}
which is in the set of such $t$. Hence, there exists some $x=x(l,\epsilon)\in\R^\dim$ such that
\begin{equation}\label{eq:for_contra}
\P^\epsilon_x\l[\Vote_p(\v W(T))=1\r]-\P^\epsilon_{d(x,T)+K_1e^{K_2 T}\epsilon|\log\epsilon|}\l[\Vote(\v B(T))=1\r]> \epsilon^l.
\end{equation}
We now seek to show that
\begin{equation}\label{eq:contra}
\P^\epsilon_x\l[\Vote_p(\v W(T))=1\r]\leq \tfrac{7}{8}\epsilon^l+\P^\epsilon_{d(x,T)+K_1e^{K_2T}\epsilon |\log \epsilon |}\l[\Vote(\v B(T))=1\r].
\end{equation}
Since $\frac{7}{8}\epsilon^l<\epsilon^l$, once we obtain equation~\eqref{eq:contra} we have a contradiction to~\eqref{eq:for_contra}, thus completing the proof.

We write $S$ for the time of the first branching event in 
$\v W(T)$
and $W_S$ for 
the position of the initial `ancestor' particle at that time.
We note that by the strong Markov property at time $S\wedge(T-\delta_\dim)$,
\begin{multline} \label{eq:first_branch}
\P^\epsilon_x\l[\Vote_p(\v W(T))=1\r]=
\E^\epsilon_x\l[g(\P^\epsilon_{W_S}\l[\Vote_p (\v W (T-S))=1\r]
\1_{S\leq T-\delta_{\dim}} \r]\\
+\E^\epsilon_x \l[ \P^\epsilon_{W_{T-\delta_{\dim}}}\l[\Vote_p(\v W(\delta_{\dim}))=1\r]\1_{S\geq T-\delta_{\dim}}\r].
\end{multline}
We begin with the second term on the right of~\eqref{eq:first_branch}. 
Since 
$T-\delta_\dim \geq \delta'_\dim-\delta_\dim=(l+1)\epsilon^2 |\log \epsilon |$ and $S\sim \text{Exp}(\epsilon^{-2})$,
\begin{equation} \label{eq:rhs2}
\E^\epsilon_x \l[ \P^\epsilon_{W_{T-\delta_{\dim}}}
\l[\Vote_p(\v W(\delta_{\dim}))=1\r]\1_{S\geq T-\delta_{\dim}}\r]
\leq \P^\epsilon \l[ S\geq (l+1)\epsilon^2 |\log \epsilon |\r]
= \epsilon^{l+1}.
\end{equation}
To bound the first term on the right of~\eqref{eq:first_branch}, 
partition on the event $\{S\leq \epsilon^{l+3}\}$ (which has
probability $\leq\epsilon^{l+1}$):
\begin{align}
&\E^\epsilon_x\l[g(\P^\epsilon_{W_S}
\l[\Vote_p (\v W (T-S))=1\r]\1_{S\leq T-\delta_{\dim}} \r]\notag\\
&\hspace{1pc}\leq \P^\epsilon\l[S\leq\epsilon^{l+3}\r] + 
\E^\epsilon_x\l[g(\P^\epsilon_{W_S}\l[\Vote_p (\v W (T-S))=1\r]
\1_{S\leq T-\delta_{\dim}} \1_{S\geq\epsilon^{l+3}}\r]\notag\\
&\hspace{1pc}\leq \epsilon^{l+1} + \E^\epsilon_x\l[g\l(\P^\epsilon_{d(W_S,T-S)+K_1e^{K_2 (T-S)}\epsilon|\log\epsilon|}\l[\Vote(\v B(T-S))=1\r]+\epsilon^l\r)\1_{S\leq T-\delta_{\dim}}\r]
.\label{eq:rhs1}
\end{align}
The last line follows from the minimality of $T'$ (note that 
if $\epsilon^{l+3}\leq S\leq T-\delta_\dim$, then $T-S \in [\delta_\dim ,T')$ 
by~\eqref{eq:star_contra}) and from monotonicity of $g$.

Conditioning on the value of $S$, since the path of the ancestor particle $(W_\cdot)$ is independent of $S$,
\begin{align} \label{eq:rhs3}
&\E^\epsilon_x\l[g\l(\P^\epsilon_{d(W_S,T-S)+K_1e^{K_2 (T-S)}\epsilon|
\log\epsilon|}\l[\Vote(\v B(T-S))=1\r]+\epsilon^l\r)
\1_{S\leq T-\delta_{\dim}}\r] \notag \\
&\hspace{1cm}\leq \int_0^{(l+1)\epsilon^2 |\log \epsilon |}
\epsilon^{-2}e^{-\epsilon^{-2} s} 
E_x\l[g\l(\P^\epsilon_{d(W_s,T-s)+K_1e^{K_2 (T-s)}\epsilon|\log\epsilon|}\l[\Vote(\v B(T-s))=1\r]+\epsilon^l\r)\r]ds \notag\\
&\hspace{3cm} +\P^\epsilon \l[S\geq (l+1)\epsilon ^2 |\log \epsilon | \r] \notag \\
&\hspace{1cm} \leq \tfrac{3}{4}\epsilon^l+
\int_0^{(l+1)\epsilon^2 |\log \epsilon |} 
\epsilon^{-2}e^{-\epsilon^{-2} s} 
E_{d(x,T)}\l[g\l(\P^\epsilon_{B_s +K_1 e^{K_2 T}\epsilon |\log \epsilon |}
[\Vote(\v B(t-s))=1]\r)\r]ds \notag \\
&\hspace{3cm}+\P^\epsilon \l[S\leq \epsilon ^3 \r] \epsilon ^l +\epsilon^{l+1} \notag \\
&\hspace{1cm} \leq \tfrac{3}{4}\epsilon^l+2\epsilon^{l+1}
+\E^\epsilon_{d(x,T)}\l[g\l(\P^\epsilon_{B_{S'}+K_1e^{K_2 T}\epsilon|\log\epsilon|}\l[\Vote(\v B(T-S'))=1\r]\r)\1_{S'\leq T-\delta_{\dim}}\r].
\end{align}
Here, the second inequality follows by Lemma~\ref{lem:keylemma_2}.
For the final inequality, we write $S'$ for the time of the first branching event in $(\v B(s))_{s \geq 0}$ and $B_{S'}$ for the position of the ancestor at that time, and note that $S'$ has the same distribution as $S$.
The inequality follows since $T\geq \delta'_\dim$ and so 
$T-\delta_\dim \geq (l+1)\epsilon ^2 |\log \epsilon |$.

Putting~\eqref{eq:rhs1}, \eqref{eq:rhs3} and~\eqref{eq:rhs2} into~\eqref{eq:first_branch} we obtain
\begin{align*}
\P^\epsilon_x\l[\Vote_p(\v W (T))=1\r]
&\leq 4\epsilon^{l+1}+\tfrac{3}{4}\epsilon^l+
\E^\epsilon_{d(x,T)}\l[g\l(\P^\epsilon_{B_{S'} +K_1 e^{K_2 T}\epsilon 
|\log \epsilon |}[\Vote(\v B(T-S'))=1]\r)\1_{S'\leq T-\delta_{\dim}}\r] \\
&\leq 4\epsilon^{l+1}+\tfrac{3}{4}\epsilon^l+\P^\epsilon_{d(x,T)+K_1 e^{K_2 T}\epsilon |\log \epsilon |}\l[\Vote(\v B(T))=1\r],
\end{align*} 
where the second line follows by the strong Markov Property for $(\v B (\cdot))$ at time $S'\wedge (T-\delta_\dim)$, in similar style to \eqref{eq:first_branch}.
Reducing $\epsilon_{\dim}$, if necessary, to ensure that 
$\tfrac{3}{4}\epsilon^l+4\epsilon^{l+1}\leq \frac{7}{8}\epsilon^l$ for all $\epsilon\in(0,\epsilon_{\dim})$, we obtain~\eqref{eq:contra}, which completes the proof of \eqref{eq:upper_final}.

By a similar argument,
using~\eqref{eq:keylemma_2eq_opp} in place of \eqref{eq:keylemma_2eq},
we can also deduce \eqref{eq:lower_final}.
\end{proof}

\begin{proof}[Of Theorem~\ref{thm:BBMtwo}]
It suffices to prove the result for sufficiently large $k\in\N$, and in particular we will show it for $k\geq 4$.

We choose $c_{\dim}(k)=c_1(k)+K_1e^{K_2T^*}$. Thus, for any $t\in[\delta_{\dim},T^*]$ and $x\in\R^{\dim}$ such that $d(x,t)\leq -c_{\dim}(k)\epsilon|\log\epsilon|$ we have
$$d(x,t)+K_1e^{K_2t}\epsilon|\log\epsilon|\leq -c_1(k)\epsilon|\log\epsilon|.$$
It follows from Theorem~\ref{thm:BBMone} (reducing $\epsilon_\dim$ if necessary so that $\epsilon<\epsilon_1(k)$) and~\eqref{eq:upper_final} 
that $\P_x\l[\Vote_p(\v W(t))=1\r]\leq 2\epsilon^k$ for such $x$ and $t$. 
Similarly, for $x$ such that $d(x,t)\geq c_{\dim}(k)\epsilon|\log\epsilon|$, 
by Theorem~\ref{thm:BBMone} and~\eqref{eq:lower_final} we have 
$\P_x\l[\Vote_p(\v{W}(t))=0\r]\leq 2\epsilon^k$.
\end{proof}

\subsubsection{Proof of Lemma~\ref{lem:keylemma_2}}
\label{sec:d=2_propagation_technical}

To complete the proof of Theorem~\ref{thm:BBMtwo}, it remains to prove Lemma~\ref{lem:keylemma_2}.
The ideas in the proof are simple, but are easily lost in the notation, so to explain the structure we begin with 
 an outline of the proof of the first 
inequality~\eqref{eq:keylemma_2eq}.
(The proof of~\eqref{eq:keylemma_2eq_opp} goes along essentially the same lines.)

We take a large constant $C$ and consider the cases $|d(x,t)| \geq C \epsilon |\log \epsilon |$ and $|d(x,t)| \leq C \epsilon |\log \epsilon |$ separately.
Since $s=\mc O( \epsilon^2 |\log \epsilon |)$, 
with high probability neither the $\dim$-dimensional Brownian motion $W$ 
nor the one-dimensional $B$ moves a distance
more than $\mc O (\epsilon |\log \epsilon |)$ before time $s$.
Therefore, if $C$ is sufficiently large and $d(x,t) \leq -C \epsilon |\log \epsilon |$, 
Theorem~\ref{thm:BBMone} tells us that the left-hand side of~\eqref{eq:keylemma_2eq} is $\leq \epsilon^{l+1}$;
similarly, if $d(x,t) \geq C \epsilon |\log \epsilon |$ then the right-hand side of \eqref{eq:keylemma_2eq} is $\geq 1$.
This leaves the case of $|d(x,t)| \leq C \epsilon |\log \epsilon |$, in which we apply Proposition~\ref{prop:coupling1} to couple $W_s$ with $B_s$ in such a way that with probability $1-\mc O (\epsilon ^{l+1})$,
$$ d(W_s,t-s)\leq B_s+ \mc O (\epsilon |\log \epsilon |)s. $$
Thus, using monotonicity \eqref{eq:monotonicity}, the left-hand side of~\eqref{eq:keylemma_2eq} 
is bounded above by
$$
\E_{d(x,t)}\l[g\l(\P^\epsilon_{B_s +(K_1e^{K_2 (t-s)}+\mc O (s))\epsilon |\log \epsilon |}[\Vote(\v B(t-s))=1]+\epsilon ^l\r)\r]
+\mc O(\epsilon ^{l+1}). 
$$
If $|p-\frac{1}{2}|\geq \frac{7}{18}$,
we can use that $|g'(p)|\leq 2/3$ to 
pull the $\epsilon^l$ outside the argument of $g$ and then use monotonicity again
to recover~\eqref{eq:keylemma_2eq}. 
The 
difficulty is that close to $p=\tfrac{1}{2}$, we have $g'(p)>1$.
In the case
$\P_{B_s +(K_1e^{K_2 (t-s)}+\mc O (s))\epsilon |\log \epsilon |}
[\Vote(\v{B}(t-s))=1]\approx \frac{1}{2}$, we instead choose $K_2\gg 0$,
and use the lower bound on the `slope of the interface' given by
Corollary~\ref{lem:high_deriv} to estimate the increment in
$\P_z^\epsilon[\Vote(\v{B}(t-s))=1]$ when we replace 
$z+(K_1e^{K_2(t-s)}+\mc O(s))\epsilon |\log\epsilon|$ by 
$z+K_1e^{K_2t}\epsilon |\log\epsilon|$. 

The remainder of this subsection contains the formal proof.

\begin{proof}[Of Lemma~\ref{lem:keylemma_2}]
We begin by proving~\eqref{eq:keylemma_2eq}.
For the duration of the proof, for $u\geq 0$ and $z\in \R$ we write
$$\Q^{\epsilon, u}_{z}=\P^\epsilon_{z}\l[\Vote(\v B(u))=1\r].$$
Recall $C_0$ and $c_1(k)$ from~\eqref{eq:smoothcf} and Theorem~\ref{thm:BBMone} respectively. Let
\begin{equation} \label{R_defn}
R=2 c_1(l)+4(l+1)\dim +1.
\end{equation}
Fix $K_2$ such that
\begin{equation} \label{eq:K_2_cond}
K_1(K_2-C_0)-C_0 R=c_1(1).
\end{equation}
Let $\epsilon_\dim = \epsilon_1(l)$ where $\epsilon_1(l)$ is defined in Theorem~\ref{thm:BBMone}.

First we need an estimate for the probability that a $\dim$-dimensional Brownian motion moves further than $\sim \epsilon |\log \epsilon |$ in time $s$ (recall that $s\leq (l+1)\epsilon^2|\log\epsilon|$). 
Let
$$
A_x=\l\{\sup_{u\in [0,s]} |W_u-x|\leq 2(l+1)\,\dim \epsilon |\log \epsilon |\r\}.
$$
Then bounding $|W_u|$ by the sum of the moduli of $\dim$ one-dimensional Brownian motions and using the reflectional symmetry of one dimensional Brownian motion,
\begin{align} \label{eq:prob_ac_est}
P_x\l[A_x^c\r]&\leq 2\dim P_0 \l[\sup_{u\in [0,s]} B_u >
2(l+1)\epsilon |\log \epsilon | \r] \notag\\
&\leq 4\dim P_0 \l[B_1 >2((l+1) |\log \epsilon |)^{1/2} \r] \notag \\
&\leq 4\dim \epsilon ^{l+1}.
\end{align}
Here, since $s\leq (l+1)\epsilon ^2 |\log \epsilon |$
the second line follows by the reflection principle.
The last line follows using the tail bound
$\P[B_1\geq x]\leq e^{-x^2/4}$.

As advertised, we now consider the following three cases:
\begin{itemize}
\item[(i)] $d(x,t) \leq -\l(2c_1(l)+2(l+1)\dim+K_1e^{K_2(t-s)}\r)\epsilon |\log \epsilon |$,
\item[(ii)] $d(x,t) \geq \l(2c_1(l)+2(l+1)\dim+K_1e^{K_2(t-s)}\r)\epsilon |\log \epsilon |$,
\item[(iii)] $|d(x,t)| \leq \l(2c_1(l)+2(l+1)\dim+K_1e^{K_2(t-s)}\r)\epsilon |\log \epsilon |$.
\end{itemize}
The third case corresponds to $x$ being close to the interface at time $t$. The first two cases correspond to $x$ falling (sufficiently far) inside or outside of the interface.

Case (i): Recall that by~\eqref{eq:t_lipschitz} there exist $v_0,V_0>0$ such that if $s\leq v_0$ and $x\in \R^\dim$ then
\begin{equation} \label{eq:dble_star_t_lip}
|d(x,t)-d(x,t-s)|\leq V_0s.
\end{equation}
We reduce $\epsilon_\dim$, if necessary, to ensure that for 
$\epsilon \in (0, \epsilon_\dim)$ we have $(l+1)\epsilon ^2 |\log \epsilon | \leq v_0$.
Then if the event $A_x$ occurs,
\begin{align}
&d(W_s,t-s)+K_1 e^{K_2 (t-s)} \epsilon |\log \epsilon |\notag\\
&\hspace{1cm}\leq -(2c_1(l)+2(l+1)\dim )\epsilon |\log \epsilon |+|d(W_s,t-s)-d(x,t)|\notag\\
&\hspace{1cm}\leq -(2c_1(l)+2(l+1)\dim )\epsilon |\log \epsilon |+|d(x,t)-d(x,t-s)|+|W_s-x|\notag\\
&\hspace{1cm}\leq -2c_1(l)\epsilon |\log \epsilon |+V_0(l+1)\epsilon ^2 |\log \epsilon |.\label{eq:case_1_start}
\end{align}
Here, the second line follows from being in case (i) and the third follows from the triangle inequality. The final line then follows from \eqref{eq:dble_star_t_lip} and that $s\leq (l+1)\epsilon ^2 |\log \epsilon |$, and since $A_x$ occurs.

Reducing $\epsilon_\dim$, if necessary, from \eqref{eq:case_1_start} we have
$$
d(W_s,t-s)+K_1 e^{K_2 (t-s)} \epsilon |\log \epsilon |\leq -c_1(l)\epsilon |\log \epsilon |.
$$
Therefore
\begin{align*}
E_x \l[g\l(\Q^{\epsilon, t-s}_{d(W_s,t-s)+K_1 e^{K_2 (t-s)} \epsilon |\log \epsilon |} +\epsilon ^l\r)\r]
&\leq E _x \l[g(\epsilon ^l +\epsilon ^l) \1_{A_x} \r]+P_x\l[A_x^c\r]\\
&\leq 6\epsilon ^{2l}+4\dim \epsilon ^{l+1}.
\end{align*}
Here the first inequality follows by Theorem~\ref{thm:BBMone} and 
the second inequality by the definition of $g$ in~\eqref{g_defn_ext} 
and by~\eqref{eq:prob_ac_est}.
Again reducing $\epsilon_\dim$ if necessary, for $\epsilon \in (0,\epsilon_\dim)$ we have
$$
E_x [g(\Q^{\epsilon, t-s}_{d(W_s,t-s)+K_1 e^{K_2 (t-s)} \epsilon |\log \epsilon |} +\epsilon ^l)]\leq \tfrac{3}{4}\epsilon ^l,
$$
and so~\eqref{eq:keylemma_2eq} holds in this case.

Case (ii): In this case, we have that $d(x,t)\geq (c_1(l) +2(l+1))\epsilon |\log \epsilon |$. A similar argument to that used for~\eqref{eq:prob_ac_est} gives us that
\begin{equation} \label{eq:star_key_lem}
P_{d(x,t)}\l[ B_s \leq c_1(l) \epsilon |\log \epsilon | \r]\leq  \epsilon ^{l+1}.
\end{equation}
It follows that in this case
\begin{align*}
E_{d(x,t)}\l[g\l(\Q^{\epsilon, t-s}_{B_s+K_1 e^{K_2 t} \epsilon |\log \epsilon |}\r) \r]
&\geq E_{d(x,t)}\l[g\l(\Q^{\epsilon, t-s}_{B_s+K_1 e^{K_2 t} \epsilon |\log \epsilon |}\r) \1\{B_s\geq c_1(l)\epsilon |\log \epsilon |\}\r]\\
&\geq g(1-\epsilon ^l)-\epsilon ^{l+1}\\
&\geq 1-3\epsilon^{2l}-\epsilon ^{l+1},
\end{align*}
where the second line follows by Theorem~\ref{thm:BBMone} 
and~\eqref{eq:star_key_lem} and the last line by the definition of $g$ 
in~\eqref{g_defn_ext}.
Again reducing $\epsilon_\dim$ if necessary, for $\epsilon\in (0,\epsilon_\dim)$ we have
$$
E_{d(x,t)}\l[g(\Q^{\epsilon, t-s}_{B_s+K_1 e^{K_2 t} \epsilon |\log \epsilon |}) \r]
\geq 1- \tfrac{3}{4} \epsilon^l
$$
and so~\eqref{eq:keylemma_2eq} holds in this case.

Case (iii): We now turn to the case in which $x$ is close to the interface. If the event $A_x$ occurs, for $u\in [0,s]$ we have
\begin{align*}
|d(W_u,t-u)|&\leq |W_u-x|+|d(x,t)|+|d(x,t)-d(x,t-u)|\\
&\leq (2c_1(l)+4(l+1)\dim + K_1 e^{K_2(t-s)})\epsilon |\log \epsilon |
+V_0(l+1)\epsilon ^2 |\log \epsilon |,
\end{align*}
where the second line follows by~\eqref{eq:dble_star_t_lip}.
Reducing $\epsilon_\dim$ if necessary, for $\epsilon \in (0,\epsilon_\dim)$ we have
\begin{equation} \label{eq:dble_dagger_key_lem}
|d(W_u,t-u)|\leq (R+K_1 e^{K_2(t-s)})\epsilon |\log \epsilon |,
\end{equation}
where 
$R$ is defined in~\eqref{R_defn}.
We now apply Proposition~\ref{prop:coupling1} with 
\begin{equation} \label{eq:beta_defn}
\beta = (R+K_1 e^{K_2(t-s)})\epsilon |\log \epsilon |.
\end{equation} 
By reducing $\epsilon_\dim$ if necessary, we have for $\epsilon \in (0,\epsilon_\dim)$ that $\beta \leq c_0$.
Define
$$
T_\beta = \inf (\{u \in [0,t):|d(W_u,t-u)| \geq \beta \}\cup \{t\}).
$$
Then by Proposition~\ref{prop:coupling1}, we can couple $(W_u)_{u\geq 0}$ 
with $(B_u)_{u\geq 0}$, a one-dimensional Brownian motion started from 
$d(x,t)$, in such a way that for $u\leq T_\beta$,
\begin{equation} \label{eq:five_star_key_lemma}
d(W_u,t-u)\leq B_u +C_0 \beta u.
\end{equation}
Hence
\begin{align} \label{eq:0_key_lemma}
E_x \l[g(\Q^{\epsilon, t-s}_{d(W_s,t-s)+K_1 e^{K_2 (t-s)}\epsilon |\log \epsilon |}+\epsilon ^l) \r]
&\leq E_{d(x,t)} \l[g(\Q^{\epsilon, t-s}_{B_s+C_0\beta s +K_1 e^{K_2 (t-s)}\epsilon |\log \epsilon |}+\epsilon ^l) \r]+P_x \l[T_\beta \leq s \r] \notag \\
&\leq  E_{d(x,t)} \l[g(\Q^{\epsilon, t-s}_{B_s+C_0\beta s +K_1 e^{K_2 (t-s)}\epsilon |\log \epsilon |}+\epsilon ^l) \r]+4\dim \epsilon ^{l+1}.
\end{align}
Here, the first line follows by~\eqref{eq:five_star_key_lemma}, \eqref{eq:monotonicity} and the monotonicity of $g$. The second line then follows by~\eqref{eq:prob_ac_est} (note that by~\eqref{eq:dble_dagger_key_lem}, if $A_x$ occurs then $T_\beta \geq s$).

Now let 
$$
E=\l\{\l|\Q^{\epsilon, t-s}_{B_s+C_0\beta s +K_1 e^{K_2 (t-s)}\epsilon |\log \epsilon |}-\tfrac{1}{2}\r|\leq \tfrac{5}{12}\r\}.
$$
We shall consider the cases $E$ and $E^c$ separately to bound the right hand side of~\eqref{eq:0_key_lemma}.  

Consider first when the event $E$ occurs. Note that by the definition of $\beta$ in~\eqref{eq:beta_defn},
\begin{align} \label{eq:K2_conseq}
K_1 e^{K_2 t} \epsilon |\log \epsilon |-\l(C_0\beta s +K_1 e^{K_2 (t-s)}\epsilon |\log \epsilon |\r)
&= \l(K_1 e^{K_2(t-s)}(e^{K_2 s}-1-C_0 s)-C_0 Rs \r)\epsilon |\log \epsilon | \notag\\
&\geq \l( K_1(K_2-C_0)-C_0 R \r)s \epsilon |\log \epsilon | \notag\\
&= c_1(1) s \epsilon |\log \epsilon |,
\end{align}
where the second line follows since $K_2>0$ and the last line follows by~\eqref{eq:K_2_cond}.
Reducing $\epsilon _\dim$ if necessary so that $\epsilon_\dim <\min(\epsilon_1(1),\tfrac{1}{24})$, for $\epsilon \in (0,\epsilon_\dim)$ we can apply 
Corollary~\ref{lem:high_deriv} with 
$z=B_s+C_0\beta s +K_1 e^{K_2 (t-s)}\epsilon |\log \epsilon |$ and 
$w=z+c_1(1) s \epsilon |\log \epsilon |\leq B_s+K_1 e^{K_2 t} \epsilon |\log \epsilon |$ 
to give that
\begin{equation} \label{eq:1_key_lemma}
\Q^{\epsilon, t-s}_{B_s+C_0\beta s +K_1 e^{K_2 (t-s)}\epsilon |\log \epsilon |} \1_E
\leq 
(\Q^{\epsilon, t-s}_{B_s+K_1 e^{K_2 t}\epsilon |\log \epsilon |}-\tfrac{1}{48}s) \1_E .
\end{equation}

Finally, we consider the case when the event $E^c$ occurs. Recall that $g(p)=3p^2-2p^3$ for $p\in [0,1]$, so $g'(p)=6p(1-p)$.
Hence if $p,\delta \geq 0$ with either $p+\delta \leq \frac{1}{9}$ or  $p \geq \frac{8}{9}$ then
\begin{equation} \label{eq:p_deriv_key}
g(p+\delta) \leq g(p)+\tfrac{2}{3}\delta .
\end{equation}
Reducing $\epsilon _\dim$ if necessary so that $\frac{1}{12}+\epsilon^l <\frac{1}{9}$ for $\epsilon \in (0,\epsilon_\dim)$, we have 
\begin{align} \label{eq:2_key_lem}
g \l(\Q^{\epsilon, t-s}_{B_s+C_0\beta s +K_1 e^{K_2 (t-s)}\epsilon |\log \epsilon |}+\epsilon ^l \r) \1_{E^c}
& \leq \l( g \l(\Q^{\epsilon, t-s}_{B_s+C_0\beta s +K_1 e^{K_2 (t-s)}\epsilon |\log \epsilon |}\r)+\tfrac{2}{3}\epsilon ^l \r) \1_{E^c} \notag \\
& \leq \l( g \l(\Q^{\epsilon, t-s}_{B_s+K_1 e^{K_2 t}\epsilon |\log \epsilon |}\r)+\tfrac{2}{3}\epsilon ^l \r) \1_{E^c},
\end{align}
where the first line follows by~\eqref{eq:p_deriv_key} and the last line by~\eqref{eq:K2_conseq} and monotonicity of $g$.

Putting \eqref{eq:1_key_lemma} and \eqref{eq:2_key_lem} into \eqref{eq:0_key_lemma},
\begin{align*}
E_x \l[g(\Q^{\epsilon, t-s}_{d(W_s,t-s)+K_1 e^{K_2 (t-s)}\epsilon |\log \epsilon |}+\epsilon ^l) \r]
&\leq E_{d(x,t)} \l[g\l(\Q^{\epsilon, t-s}_{B_s+K_1 e^{K_2 t}\epsilon |\log \epsilon |}-\tfrac{1}{48}s +\epsilon^l \r)\1_E \r]\\
&\quad +E_{d(x,t)} \l[\l( g\l(\Q^{\epsilon, t-s}_{B_s+K_1 e^{K_2 t}\epsilon |\log \epsilon |}\r) +\tfrac{2}{3}\epsilon^l \r)\1_{E^c} \r]\\
&\quad +4\dim\epsilon^{l+1}\\
&\leq E_{d(x,t)} \l[g\l(\Q^{\epsilon, t-s}_{B_s+K_1 e^{K_2 t}\epsilon |\log \epsilon |}\r)\r]\\
&\quad +\tfrac{2}{3}\epsilon^l + \epsilon ^l \1 _{s\leq 48 \epsilon ^l}+4\dim\epsilon^{l+1},
\end{align*}
where the last inequality follows in the case $s\leq 48 \epsilon ^l$ since $|g'(p)|\leq \frac{3}{2}$ for all $p \in [0,1]$. 
Reducing $\epsilon_\dim$, if necessary, so that $4\dim\epsilon^{l+1}\leq \frac{1}{12}\epsilon^l$ and $48\epsilon ^l \leq \epsilon^3$ for $\epsilon \in (0,\epsilon _\dim)$ completes the proof of~\eqref{eq:keylemma_2eq}.

The second statement of the lemma, equation \eqref{eq:keylemma_2eq_opp}, is proved by the same argument, considering $\{\Vote(\v B(u))=0\}$ instead of $\{\Vote(\v B(u))=1\}$ and using $d(W_u,t-u)\geq B_u -C_0 \beta u$ for $u\leq T_\beta$ in place of~\eqref{eq:five_star_key_lemma}.
\end{proof}

\section{Proof of Theorem~\ref{thm:slfvs}}
\label{proof of slfvs to cf}

In this section we turn to the proof of our central result, 
Theorem~\ref{thm:slfvs}, which provides convergence, after suitable rescaling,
of the SLFVS started from an appropriate initial condition to the indicator 
function of a region whose boundary evolves according to mean curvature
flow. The proof mimics that of Theorem~\ref{theorem ac to cf}
in exploiting a dual process. However, because of genetic drift, in addition
to branching, individuals in our dual process can coalesce. The duality
relation will once again be with a historical process and expressed
through a majority voting procedure.

\subsection{A branching and coalescing dual for the SLFVS}
\label{duality for SLFVS}

We begin by describing the dual process of branching and coalescing
lineages. It is driven by the same Poisson Point Process of `events'
that drives the SLFVS.
Recall from~\eqref{eq:slfvs_intensity_intro} that
$\Pi^n$ is a Poisson point process on 
$\R_+ \times \R^{\dim} \times (0,\infty)$ with intensity measure 
$$
n dt\otimes n^{\beta} dx\otimes \mu^n(dr).
$$
We also let
$$
u_n = \frac{u}{n^{1-2\beta}}, \qquad\mbox{and}\qquad 
\v{s}_n = 
\frac{1}{\epsilon_n^{2}}\frac{1}{n^{2\beta}}.
$$

\begin{defn}[SLFVS dual]
\label{def:slfvs_dual}
For $n\in\N$, the process $(\mathcal P ^n _t)_{t\geq 0}$ is the
$\bigcup_{l\geq 1}(\R^{\dim})^l$-valued Markov process 
with dynamics defined as follows.

The process is started with a single individual 
$\mathcal P^n_0=x$ and for $t\geq 0$, 
$\mathcal P^n_t = (\xi^n_1(t),\ldots , \xi^n_{N(t)}(t))$ for some $N(t)\in \N$.
At each event $(t,x,r)\in \Pi^n$, independently of all else, 
the event is said to be neutral with probability $1-\v{s}_n$. In this case: 
\begin{enumerate}
\item For each $\xi_i^n(t-)\in \mc{B}_r(x)$, independently mark 
the corresponding 
individual with probability $u_n$;
\item if at least one individual is marked, 
all marked individuals coalesce into a single
offspring individual, whose location is drawn uniformly at random 
from within $\mc{B}_r(x)$.
\end{enumerate}
With the complementary probability $\v{s}_n$, the event is said to be 
selective, in which case:
\begin{enumerate}
\item For each $\xi^n_i(t-)\in \mc{B}_r(x)$, independently mark the corresponding
individual with probability $u_n$;
\item if at least one individual is marked,
all of the marked individuals are replaced by {\em three} offspring
individuals, whose locations are drawn independently and uniformly from
within $\mc{B}_r(x)$.
\end{enumerate}
In both cases, if no individual is marked, then nothing happens.
\end{defn}
\begin{remark}
We have referred to the new individuals created during reproduction events
as `offspring' individuals. From a biological perspective, it would perhaps 
be more
natural to call them `parents' or `potential parents', as forwards in time
they correspond to the locations from which alleles from the parental
generation are sampled.
However, as much of our proof of Theorem~\ref{theorem ac to cf}
will carry over with minimal changes to
the SLFVS setting, we wish to
retain the terminology of the branching Brownian motion of the previous
section.
\end{remark}
The duality relation that we exploit is between the SLFVS and
the {\em historical process}
of branching and coalescing lineages, 
$$\Xi^n(t):=
(\mathcal P^n_s)_{0\leq s\leq t}.$$
We write $\P_x$ for the law of $\Xi^n$ when $\mathcal P^n_0$ is 
the single point $x$ and $\E_x$ for the corresponding expectation. 
For $\v i\in \{1,2,3\}^\N$
with $\v{i}=(i_1,i_2,\ldots)$,
we let $(\xi^n_{\v i}(\cdot))_{0\leq s\leq t}\subseteq\Xi(t)$ denote the $\R^\dim$-valued path 
which jumps to the location of an offspring when the individual in $\mc P^n_s$ at its location is affected by an event, jumping to the $i_k ^{\text{th}}$ offspring when it is affected by its $k^{\text{th}}$ selective event.
We shall refer to $(\xi^n_{\v i}(\cdot))_{0\leq s\leq t}$ as an
ancestral lineage.

The voting procedure on $\Xi^n(t)$ is a minor modification 
of Definition~\ref{vote_defn}.
Let $p:\R^\dim \to [0,1]$ be a fixed function.
Recalling that the set of individuals in $\mc P^n_t$ is 
$\{\xi^n_1(t),\ldots , \xi^n_{N(t)}(t)\}$,
for each $j\leq N(t)$, the individual 
$\xi_j^n(t)$ votes $1$ with probability $p(\xi_j^n(t))$
and otherwise votes $0$; votes from different individuals are independent. 
As we trace backwards in time through $\Xi(t)$,
\begin{enumerate}
\item at each neutral event, all individuals that are marked in the event
adopt the vote of the offspring individual of the event;
\item at each selective event in $\Pi^n$, all individuals 
that are marked in the event adopt the 
majority vote of the votes of the three offspring individuals of the event.
\end{enumerate}
This defines an iterative voting procedure, which runs inwards from 
the `leaves' of $\Xi^n(t)$ to the ancestral individual $\emptyset$. 
\begin{defn}[$\Vote _p $] \label{vote_defn_slfvs}
With the voting procedure described above, we define
$ \mathbb{V} _p(\Xi^n(t)) $
to be the vote associated to the root $\emptyset$. 
\end{defn}

At this point the duality relation between the SLFVS and $\Xi(t)$ is 
easy to guess. However, 
in order to write it down formally, we have to overcome the
fact that the SLFVS will only be defined, as a function, Lebesgue a.e.~and
so we cannot necessarily define 
$w^n_t(x)$ for a fixed point $x\in \R^\dim$.
However, if,
$\psi\in C(\R^\dim)\cap L^1(\R^\dim)$, 
then the function
$$\int_{\R^\dim}\psi(x)w^n_t(x)dx,$$
{\em is} well-defined.

\begin{theorem} \label{thm:slfvs_duality}
The spatial $\Lambda$-Fleming-Viot process with selection driven by $\Pi^n$,
$(w^n_t(x), x\in\R^\dim)_{t\geq 0}$, is
dual to the historical process $(\Xi^n(t))_{t\geq 0}$ in the sense that
for every $\psi\in C(\R^\dim)\cap L^1(\R^\dim)$, we have
\begin{equation}
\E_{p}\bigg[\int_{\R^\dim}  \psi(x)w^n_t(x)\, dx\bigg] 
= \int_{\R^\dim} \psi(x)\E_x\bigg[ \Vote_{p}\big( \Xi^n(t)\big)\bigg]\, dx
= \int_{\R^\dim} \psi(x)\P_x\bigg[ \Vote_{p}\big( \Xi^n(t)\big)=1\bigg]\, dx.
\label{dual formula}
\end{equation}
\end{theorem}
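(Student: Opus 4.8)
The second equality is immediate: since $\Vote_p(\Xi^n(t))$ is $\{0,1\}$-valued, its expectation under $\P_x$ equals $\P_x[\Vote_p(\Xi^n(t))=1]$. All the work lies in the first equality, and the plan is to prove it by a pathwise (generator) duality that adapts the standard moment-duality calculation for the spatial $\Lambda$-Fleming-Viot process, mirroring the partition-on-the-first-event argument used for branching Brownian motion in Theorem~\ref{thm:ACdual}.

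First I would introduce a duality function coupling the two processes. Given a realisation of the dual run for a dual-time $s$ --- that is, a partial genealogy $\Xi^n(s)$ with current lineages at positions $\xi^n_1(s),\ldots,\xi^n_{N(s)}(s)$ --- and a measurable $w:\R^\dim\to[0,1]$, let $H(w,\Xi^n(s))$ be the probability that the root votes $1$ when each current lineage $\xi^n_j(s)$ is given an independent vote equal to $1$ with probability $w(\xi^n_j(s))$ and these votes are propagated back to the root through the already-constructed genealogy by the rules of Definition~\ref{vote_defn_slfvs}. Two boundary evaluations anchor the argument: at $s=0$ the dual is the single lineage at $x$, so $H(w,\Xi^n(0))=w(x)$; and at $s=t$, taking $w=p=w^n_0$ and averaging over the leaf votes gives $H(p,\Xi^n(t))=\P_x[\Vote_p(\Xi^n(t))=1]$.

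The core of the argument is to show that
$$\Phi(s):=\E\l[H\l(w^n_{t-s},\Xi^n(s)\r)\r]$$
is constant on $[0,t]$, where the SLFVS and the dual are built from the same driving process $\Pi^n$; since $w^n_{t-s}$ depends on the events of $\Pi^n$ in $[0,t-s]$ while $\Xi^n(s)$ is the backward exploration over $[t-s,t]$, it suffices to check that each event leaves $\Phi$ unchanged. This is a generator-matching computation performed event by event. At a neutral event $(t',x',r')\in\Pi^n$ the forward update replaces $w_{t'-}$ on $\mc B_{r'}(x')$ by $(1-u_n)w_{t'-}+u_n\1_{\{\alpha_0=a\}}$ with $\alpha_0$ drawn from a uniform parental location, while backward each lineage in the ball is marked independently with probability $u_n$ and the marked lineages coalesce to a single offspring at a uniform location; I would verify that these two descriptions induce the same change in $H$. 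At a selective event the forward update uses the majority type $\widehat{\alpha}$ of three independently sampled potential parents, while backward the marked lineages are replaced by three offspring subject to majority voting, so the matching reduces to the algebraic identity that the majority-of-three voting probability is $g(w)=w^3+3w^2(1-w)$ --- exactly the polynomial in the selection term \eqref{selection mechanism} and in the branching step of Theorem~\ref{thm:ACdual}. Combining the two boundary evaluations with the constancy of $\Phi$, and finally integrating against $\psi$, yields the first equality.

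The main obstacle will be twofold. First, $w^n_t$ is only defined Lebesgue-almost everywhere, so the pointwise statement $\E_p[w^n_t(x)]=\P_x[\Vote_p(\Xi^n(t))=1]$ has no literal meaning; this is exactly why the theorem is phrased by integrating against $\psi\in C(\R^\dim)\cap L^1(\R^\dim)$, and I would use Fubini to move $\E$ inside the spatial integral and to make sense of the evaluations $w^n_{t-s}(\xi^n_j)$ through the spatial averaging built into the reproduction events. Second, I must justify that $\Phi$ is genuinely constant rather than merely infinitesimally stationary, which needs integrability control on the lineage count $N(s)$; this is supplied by the fact that branching occurs only at the small rate $\v{s}_n$ and produces just three offspring, so that $\E_x[N(s)]$ grows at most exponentially and the event-by-event computation is rigorous. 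The remaining consistency checks --- existence of both the SLFVS and its dual, and well-definedness of the voting procedure --- follow as in~\cite{etheridge/veber/yu:2014}.
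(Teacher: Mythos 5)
Your proposal is correct and follows essentially the approach the paper itself intends: the paper omits the formal proof, stating only that a generator-based duality computation is a simple extension of the genic-selection case of \cite{etheridge/veber/yu:2014}, preceded by exactly the heuristic event-by-event matching (neutral events via independent marking and coalescence to a uniformly placed parent, selective events via the majority-of-three polynomial) that you spell out. Your interpolation $\Phi(s)=\E\l[H\l(w^n_{t-s},\Xi^n(s)\r)\r]$ built from the common driving noise $\Pi^n$, together with handling the a.e.-definedness of $w^n_t$ by integrating against $\psi$ and the finiteness of the lineage count, are precisely the ingredients of that omitted formal argument.
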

\begin{remark}
Of course, we are abusing notation here:
the expectations on the left and right of this equation are
taken with respect to different measures. The subscripts on the expectations
are the initial values for the processes on each side.
\end{remark}
To see that the result should be true, note that (if it is defined) $w_t^n(x)$ is the probability that an allele sampled from the population at the location $x$ at time $t$ is of type $a$. In order to determine that probability, we trace back until the most recent event that covered the location $x$. With probability $u_n$, the chosen allele was an offspring of the event, in which  case its type can be determined if we know the types of the potential parents of the event. If the event is neutral, the type is that of an allele (the `parent') sampled from a point picked uniformly at random from the affected region at the time of the event; if it is selective, then the type is the `majority vote' of three `potential parents'  sampled uniformly at random from the affected region. In order
to determine the types of the potential parents, we continue to trace backwards in time, following the locations of all potential ancestors until time zero. This gives us the dual process
$\Xi^n(t)$. At that time, each potential ancestor samples its type according to the initial condition $w_0$ at its location. We can then determine $w_t^n(x)$ by working back through $\Xi^n(t)$ using our majority voting procedure.

A formal proof of Theorem~\ref{thm:slfvs_duality} using generators is a simple extension of that of the corresponding duality for the 
spatial $\Lambda$-Fleming-Viot process with genic selection  
in~\cite{etheridge/veber/yu:2014} (and indeed can be extended to 
cover the more general initial conditions for the dual process considered
there) and so is omitted. 

The duality reduces the proof of 
Theorem~\ref{thm:slfvs} to the following analogue
of Theorem~\ref{thm:BBMtwo}. 
\begin{theorem} \label{thm:slfvs_dual}
Take $\sigma^2$ as in \eqref{defn of sigma}.
Suppose that $\beta\in(0,1/4)$ and let 
$\epsilon_n$ be a sequence such that $\epsilon_n\to 0$ and 
$(\log n)^{1/2}\epsilon_n\to\infty$ as $n\rightarrow\infty$.
Assume $p$ satisfies {\Ca}-{\Cc} and define $\mathscr{T}$, $d(x,t)$ as for Theorem \ref{theorem ac to cf};
take $T^*<\mathscr{T}$. Let $k\in\N$. There exist 
$n_*(k)\in \N$, and $a_*(k),d_*(k)\in(0,\infty)$ such that for all 
$n\geq n^*$ and all $t$ satisfying 
$a_* \epsilon_n ^2 |\log \epsilon_n |\leq t\leq T^*$, 
\begin{enumerate}
\item for $x$ such that 
$d(x,\sigma ^2 t)\geq d_* \epsilon_n |\log \epsilon_n|$, 
we have $\P_x\l[\Vote_p (\Xi^n (t))=1\r]\geq 1-\epsilon_n^k$.
\item for $x$ such that 
$d(x,\sigma ^2 t)\leq -d_* \epsilon_n |\log \epsilon_n|$, 
we have $\P_x\l[\Vote_p (\Xi ^n (t))=1\r]\leq \epsilon_n^k$.
\end{enumerate}
\end{theorem}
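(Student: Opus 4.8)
The plan is to mirror the proof of Theorem~\ref{thm:BBMtwo}, showing that for the purpose of the majority vote the rescaled dual $\Xi^n(t)$ behaves like the ternary branching Brownian motion $\v W(t)$ of Section~\ref{sec:BBMtwo}, but with a single lineage diffusing at rate $\sigma^2$ rather than rate $2$ and branching into three at an effective rate $\asymp 1/\epsilon_n^2$. A short computation with the intensity~\eqref{eq:slfvs_intensity_intro} and the scalings~\eqref{scalings} shows that a single lineage is marked at total rate $\asymp u n^{2\beta}\int V_r\,\mu(dr)$, of which a fraction $\v s_n$ is \emph{selective}; hence selective (branching) events occur at rate $\v s_n\,u n^{2\beta}\int V_r\,\mu(dr)=\tfrac{u}{\epsilon_n^2}\int V_r\,\mu(dr)$, matching the $1/\epsilon_n^2$ rate of $\v W$ up to a constant, while a marked lineage makes an isotropic displacement whose second moment is recorded in~\eqref{defn of sigma}. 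As in Section~\ref{sec:BBMtwo} I would split the argument into a \emph{generation} phase (an interface of width $\mc O(\epsilon_n|\log\epsilon_n|)$ forming in time $\mc O(\epsilon_n^2|\log\epsilon_n|)$, the analogue of Proposition~\ref{prop:d=2_generation}) and a \emph{propagation} phase (the contradiction/minimality argument of Proposition~\ref{prop:contra}, driven by the one-step estimate Lemma~\ref{lem:keylemma_2}). The two genuinely new ingredients are (i) replacing the It\^o coupling of Proposition~\ref{prop:coupling1} by a coupling of the \emph{jump} motion of a lineage with a one-dimensional Brownian motion, and (ii) controlling coalescence.

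For (i), I would run a single root lineage $\xi^n$ forward until its first selective event and use It\^o's formula for pure-jump processes to write $d(\xi^n_s,\sigma^2(t-s))$ as a drift plus a martingale. As in Proposition~\ref{prop:coupling1}, the regularity properties~\eqref{eq:unitgrad}--\eqref{eq:t_lipschitz} provided by {\Ca}--{\Cc} bound the drift by $\mc O(\beta)$ when $\xi^n$ is within $\beta$ of $\v\Gamma_{t-s}$ and, since $|\nabla d|=1$ and the single-lineage displacement has diffusivity $\sigma^2$, identify the martingale part as having quadratic variation $\approx\sigma^2 s$; the time change by $\sigma^2$ is exactly what produces $d(x,\sigma^2 t)$ in the statement. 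A Skorokhod-embedding (or martingale strong-approximation) argument then couples $d(\xi^n_s,\sigma^2(t-s))$ with a one-dimensional Brownian motion started at $d(x,\sigma^2 t)$, with an error governed by the jump size $\mc O(n^{-\beta})$; since $\epsilon_n\gg(\log n)^{-1/2}\gg n^{-\beta}$ this error is $o(\epsilon_n|\log\epsilon_n|)$ and hence harmless in the estimates of Lemma~\ref{lem:keylemma_2}.

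The hard part is (ii). Two lineages merge only when they occupy a common ball of radius $\mc O(n^{-\beta})$ and are simultaneously marked, so by~\eqref{scalings} the pairwise coalescence rate is $\asymp n\,u_n^2\,V(n^\beta\rho)=\tfrac{u^2}{n^{1-4\beta}}V(n^\beta\rho)$, where $\rho$ is the separation and $V(\cdot)$ the rescaled ball-overlap $\int V_r(\cdot)\,\mu(dr)$; this is precisely where $\beta<1/4$ enters, forcing the per-pair rate to $0$. Because $\dim\ge2$, the difference of two separated lineages is transient, so after their most recent common branching they spend only a short collision local time within $\mc O(n^{-\beta})$ of one another; multiplying this local time by the per-pair rate and summing over the genealogy bounds the expected number of coalescences. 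The delicate point, and the main obstacle of the proof, is that the number of lineages grows like $3^{\mc O(t/\epsilon_n^2)}$, so this genealogically-weighted sum cannot be controlled crudely: one must exploit that only lineages with a \emph{recent} common ancestor are close enough to interact, and that distant lineages have, by transience, negligible collision local time, so that the sum stays $o(\epsilon_n^k)$ on the time scales of the recursion. Granting this, the three offspring subtrees created at the first branching in Proposition~\ref{prop:contra} may be treated as vertex-disjoint, so that $\Vote_p(\Xi^n(t))$ factorises through $g$ as in~\eqref{branchid}, each subtree's internal coalescence being already absorbed into its own voting probability through the minimality of $T'$. Crucially, the symmetry and monotonicity inputs~\eqref{eq:monotonicity}, \eqref{prob_symmetry} and Lemma~\ref{no_demagnification} pass over to $\Xi^n$ unchanged, as they rest only on the reflection symmetry of the lineage motion and not on absence of coalescence, so no voting bias is lost.

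With (i) and (ii) in hand the proof assembles as for Theorem~\ref{thm:BBMtwo}. The generation estimate follows Proposition~\ref{prop:d=2_generation}: a large regular ternary subtree sits inside $\Xi^n$ by the Cram\'er bound of Lemma~\ref{ternary_tree} applied to the selective events, and {\Cb}--{\Cc} together with the displacement bound of Lemma~\ref{all_particles_bound} give a voting bias $\ge\tfrac12+\epsilon_n$ at its leaves, which Lemma~\ref{g_iteration} magnifies. The propagation estimate then runs the contradiction argument of Proposition~\ref{prop:contra}, with Lemma~\ref{lem:keylemma_2} replaced by the $\Xi^n$-analogue supplied by the jump coupling of (i) and Corollary~\ref{lem:high_deriv}, the one-dimensional input being Theorem~\ref{thm:BBMone}. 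In summary, $\beta<1/4$ is used to kill coalescence and $(\log n)^{1/2}\epsilon_n\to\infty$ to separate the jump scale $n^{-\beta}$ from the interface width $\epsilon_n|\log\epsilon_n|$; reconciling the coalescence control with the growth of the lineage number over times as long as $T^*$ is where the real work lies.
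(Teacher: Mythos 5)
Your architecture is the right one, and it matches the paper's: couple the dual to a branching jump process whose lineages evolve independently after branching, couple a single lineage to a Brownian motion run at speed $\sigma^2$ via Skorokhod embedding (the paper's Lemma~\ref{lem:W_xin_close} and Corollary~\ref{cor:xi_W}, with error $n^{-\beta/6}=o(\epsilon_n|\log\epsilon_n|)$), and then rerun the generation and propagation phases (the paper's Proposition~\ref{prop:d=2_generation_slfvs}, Lemma~\ref{lem:keylemma_slfvs} and Proposition~\ref{prop:contra_slfv}). But the step that you yourself flag as ``where the real work lies'' --- controlling coalescence, equivalently justifying that the three subtrees at a branching can be treated as independent so that the vote factorises through $g$ --- is precisely the step you leave ungranted, and the route you sketch for it is flawed.

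Two concrete problems. First, your mechanism rests on the claim that ``because $\dim\ge 2$, the difference of two separated lineages is transient'': in $\dim=2$, the dimension of main interest, the difference of two lineages is neighbourhood-recurrent, not transient, so the ``negligible collision local time'' assertion is not available there as stated. Second, your claim that the sum over pairs ``cannot be controlled crudely'' is the opposite of the truth, and missing this is what drives you toward the unnecessary local-time machinery. Since a lineage is hit by selective events at rate $\eta\epsilon_n^{-2}=o(\log n)$ (this is exactly where the hypothesis $(\log n)^{1/2}\epsilon_n\to\infty$ enters), with high probability the genealogy over $[0,T^*]$ is contained in $\mathcal T^{\text{reg}}_{b\log n}$ for \emph{any} fixed $b>0$, so there are only $n^{2b\log 3}$ pairs of lineages --- sub-polynomially many, not $3^{\Theta(t/\epsilon_n^2)}$ in any harmful sense. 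Moreover, no spatial proximity estimate is needed at all: the paper passes to a ``pre-emptive'' version of the dual (Definition~\ref{def:slfvs_dual_preemptive}) in which each lineage carries a Bernoulli$(u_n)$ mark, refreshed independently of positions at every event covering it, and observes that two lineages can interact \emph{only} when both are simultaneously marked. Per pair, the probability of ever being simultaneously marked during $[0,T^*]$ is $\mc O(nu_n^2)=\mc O(n^{4\beta-1})$ (each lineage meets $\Theta(n)$ events, and each simultaneous marking costs $u_n^2$, irrespective of the lineages' separation); this is Lemma~\ref{lem:no_nearby}. Choosing $b$ with $2b\log 3<1-4\beta$, a plain union bound gives a coupling of $\Xi^n$ with the independent process $\Psi^n$ failing with probability $\mc O(n^{-\alpha})=o(\epsilon_n^k)$ (Lemma~\ref{lem:P=D}), which is dimension-independent and already sharp enough. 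Without some argument of this kind, your proposal is not a proof: the factorisation through $g$ at the first branching, on which your propagation step relies via the minimality of $T'$, is exactly what fails when subtrees can interact.
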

Before providing a proof of this result, let us explain why it should be 
true. 

First consider 
the motion of a single ancestral lineage $\xi_{\v i}^n(\cdot)$ in $\Xi^n(t)$.
It evolves as a pure jump process which is
homogeneous in both space and time. Write $V_r$ for the volume
of $\mc{B}_r(x)$.
The rate at which the lineage jumps from
$y$ to $y+z$ can be written
\begin{equation}
\label{jump of size z}
m_n(dz)=nu_nn^{\dim\beta}\int_0^{\mc{R}_n}\frac{V_r(0,z)}{V_r}\mu^n(dr)\,dz,
\end{equation}
where $V_r(0,z)$ is the volume of ${\mc B}_r(0)\cap {\mc B}_r(z)$.
To see this, by spatial homogeneity, we may take the lineage to be at the 
origin in $\R^\dim$ before the jump, and then, in order for it to jump to
$z$, it must be affected by an event that covers both $0$ and $z$. If the
event has radius $r$, then the 
volume of possible centres, $x$, 
of such events is $V_r(0,z)$ and so the intensity with which such a centre
is selected is
$n\,n^{\dim\beta}V_r(0,z)\mu^n(dr)$. 
The parental location is
chosen uniformly from the ball $\mc{B}_r(x)$, so the probability that 
$z$ is chosen as the parental location is $dz/V_r$ and the probability
that our lineage is actually affected by the event is $u_n$.
Combining these yields~\eqref{jump of size z}.

The total rate of jumps is
\begin{eqnarray}
\int_{\R^\dim}m_n(dz)&=&\int_0^{\mc{R}_n}nu_n\,n^{\dim\beta}\frac{1}{V_r}
\int_{\R^\dim}\int_{\R^\dim}\1_{|x|<r}\1_{|x-z|<r}dx\,dz\,\mu^n(dr)
\nonumber\\
&=&\int_0^{\mc{R}_n}nu_n\,n^{\dim\beta}V_r\mu^n(dr)\nonumber \\
&=&n^{2\beta} u V_1\int_0^{\mc{R}}r^d\mu(dr),\label{jump rate}
\end{eqnarray}
and the size of each jump is $\Theta(n^{-\beta})$ and so it is no
surprise that in the limit a single lineage will evolve according to
a (time-changed) Brownian motion. To identify the diffusion constant, we
calculate: 
\begin{multline}
\label{identifying sigma}
\frac{1}{2\dim}\int_{\R^\dim}|z|^2m_n(dz)
=\frac{1}{2\dim}\int_{\R^\dim}|z|^2nu_n\int_0^{\mc{R}_n}n^{\dim\beta}
\frac{V_r(0,z)}{V_r}\mu^n(dr)dz\\
=\frac{u}{2\dim}\int_0^{\mc{R}}\int_{\R^\dim}|z|^2\frac{V_r(0,z)}{V_r}dz\mu(dr),
\end{multline}
which is precisely $\sigma^2$ from~\eqref{defn of sigma}.

Note also that a lineage is affected by selective events at rate
\begin{equation}\label{eq:sel_rate_slfvs}
\l(uV_1\int_0^\mc{R}r^d\mu(dr)\r)n^{2\beta}\v{s}_n
=\eta \epsilon_n^{-2},
\end{equation}
where 
$\eta =uV_1\int_0^\mc{R}r^d\mu(dr)$.
Evidently, we can bound the total number of lineages in $\Xi^n(t)$ above by 
the total number in a process in which each lineage, independently, branches at 
rate $\eta \epsilon_n^{-2}$. Since  
$\epsilon_n^{-2}=o(\log n)$, this implies that for any $\delta>0$,
with high probability, there are 
$o(n^\delta)$ pairs of lineages in $\Xi^n(T^*)$. Each such pair is
in the region affected by some event (neutral or selective)
at most $\mc{O}(n)$ times in $[0,T^*]$
and so the chance that we see any coalescence events is 
$o(nu_n^2\,n^\delta)$ for any $\delta>0$. Since
$nu_n^2=n^{4\beta-1}$ 
and $\beta \in (0,1/4)$, for large $n$ we do not expect to
see any coalescence events before time $T^*$. 

Combining the above, the dual is well approximated by a ternary branching
Brownian motion with branching rate $\Theta(\epsilon_n^{-2})$ and 
so it is natural to expect that an equivalent of 
Theorem~\ref{thm:BBMtwo} holds.

\subsection{Majority voting in the {\slfvs}, for $\dim\geq 2$}
\label{sec:slfvs}

The rigorous proof of Theorem~\ref{thm:slfvs}
closely follows that of Theorem~\ref{thm:BBMtwo}. 
In Section~\ref{sec:slfvs_generation}, we focus on generation of 
the interface, which is proved in much the same way as 
Proposition~\ref{prop:d=2_generation}. 
Then, in Section~\ref{sec:slfvs_propagation}, we look at the propagation 
of the interface. We shall see that, since it essentially focusses on a 
single branching event, the argument of Section~\ref{sec:d=2_propagation} 
is sufficiently flexible to adapt to the SLFVS setting.

First we present the additional arguments required in the SLFVS setting.
These stem from the fact that ancestral
lineages in the dual of the SLFVS follow jump processes (which,
when the lineages are too close together, are dependent), and from the
coalescence of ancestral lineages.
In Section~\ref{sec:slfvs_dual_ingredients} we show that (in between
selective events) the motion of a single ancestral lineage
is approximately (time-changed) Brownian motion. 
Then, in Section~\ref{sec:slfvs_indep_after_branching},
we show that, asymptotically, the three families of descendants of
offspring created during a selective event evolve independently
(conditional on their locations at birth).

\begin{remark}
In Sections~\ref{sec:BBMone} and~\ref{sec:BBMtwo} we used subscripts to 
distinguish variables that played the same role in each section,
but had different values; e.g.~$\delta_1$ in~\eqref{delta 1} 
and $\delta_{\dim}$ in~\eqref{eq:delta2}. The corresponding 
quantities in this section will be denoted with
a subscript $*$, for example $\delta_*$ in~\eqref{eq:delta_slfv}.
\end{remark}

\subsubsection{A single lineage}
\label{sec:slfvs_dual_ingredients}

We begin the proof by showing that the trajectory of a single 
lineage is close to that of a Brownian motion. We follow what is now a
familiar argument in the context of spatial $\Lambda$-Fleming-Viot processes
(see for example~\cite{etheridge/freeman/penington/straulino:2015}).

Let $(\xi^n(t))_{t \geq 0}$ be a pure jump process started at 
$x\in \R^\dim$ with rate of jumps from $y$ to $y+z$ given by the 
intensity measure $m^n(dz)$, and let $(W(t))_{t\geq 0}$ be a Brownian 
motion in $\R^\dim$ started at $x$.
\begin{lemma} \label{lem:W_xin_close}
For $t>0$ fixed, there is a coupling of $W$ and $\xi^n$ under which
$$
\P \l[ \l|\xi^n(t)-W(\sigma^2 t)\r| \geq n^{-\beta/6} \r]=
\mc O(n^{-\beta }(t \vee 1)).
$$
\end{lemma}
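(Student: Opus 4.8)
The plan is to exploit the fact that for a \emph{single} lineage the two genuinely new difficulties of this section, coalescence and dependence between nearby lineages, are simply absent. By the spatial and temporal homogeneity recorded around \eqref{jump of size z}, the process $(\xi^n(t)-x)_{t\ge0}$ is a pure-jump L\'evy process, and since its total jump rate $\lambda_n:=\int_{\R^\dim}m_n(dz)=\Theta(n^{2\beta})$ is finite (see \eqref{jump rate}), it is in fact a compound Poisson process. I would first record its moments. The jump kernel is symmetric, since $V_r(0,z)=V_r(0,-z)$, so increments are mean zero; its jumps are bounded, $|z|<2\mc R_n=2n^{-\beta}\mc R$; and by isotropy together with \eqref{identifying sigma} one has $\int z_iz_j\,m_n(dz)=2\sigma^2\delta_{ij}$, so that $\mathrm{Cov}(\xi^n(t)-x)=2\sigma^2t\,I$. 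This is exactly the covariance of $W(\sigma^2t)-x$ under the rate-$2$ convention \eqref{eq:bm_rate_2}, which is what singles out $W(\sigma^2\,\cdot)$ as the correct comparison process. Finally I would note the third-moment bound $\int_{\R^\dim}|z|^3\,m_n(dz)\le 2\mc R_n\int_{\R^\dim}|z|^2\,m_n(dz)=4\dim\sigma^2\mc R_n=\mc O(n^{-\beta})$, which quantifies how nearly Gaussian the small jumps are.

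With these moments in hand I would build the coupling in two essentially independent pieces. Writing $\xi^n(t)-x=\sum_{i=1}^{N(t)}Z_i$ with $N(t)\sim\Poi(\lambda_n t)$ and $(Z_i)_{i\ge1}$ i.i.d.\ of law $m_n/\lambda_n$, I would first apply a strong (pathwise) Gaussian approximation for sums of bounded i.i.d.\ mean-zero vectors --- a multivariate KMT/Zaitsev-type embedding, or equivalently the hands-on construction of \cite{etheridge/freeman/penington/straulino:2015} --- to realise, on a common probability space, Gaussian partial sums matching the per-step covariance $\tfrac{2\sigma^2}{\lambda_n}I$ of each $Z_i$, with $|\sum_{i\le m}Z_i-\text{(Gaussian partial sum)}|$ controlled uniformly in $m\le\mc O(\lambda_n t)$ by a multiple of $\mc R_n\log(\lambda_n t)$ outside an exponentially small event. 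Identifying the Gaussian partial sum at the terminal (random) index with a rate-$2$ Brownian motion $W$ sampled at the operational time $\tfrac{\sigma^2}{\lambda_n}N(t)$, the only remaining gap between $\xi^n(t)$ and $W(\sigma^2t)$ is this random time change: the law of large numbers forces $\tfrac{\sigma^2}{\lambda_n}N(t)\approx\sigma^2t$, and I would bound $|N(t)-\lambda_n t|$ by Poisson concentration and convert the time discrepancy $\tfrac{\sigma^2}{\lambda_n}|N(t)-\lambda_n t|=\mc O_{\P}(n^{-\beta}\sqrt t)$ into a position discrepancy using the modulus of continuity of $W$.

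Combining the two pieces, both contributions are of strictly smaller order than the stated scale $n^{-\beta/6}$, and choosing the thresholds so that the dominant failure probability is $\mc O(n^{-\beta}(t\vee1))$ --- the factor $t\vee1$ coming from the time-change term --- completes the argument; the exponents $\tfrac16$ and $\beta$ are deliberately crude, leaving ample room to absorb every error term. The genuine obstacle is the first piece: passing from the \emph{weak} diffusive convergence of a single lineage to a \emph{pathwise} coupling with an explicit, summable tail bound, in dimension $\dim\ge2$ and with a random (Poisson) number of increments. I expect this multivariate strong-approximation step, together with the bookkeeping needed to reconcile the operational time $\tfrac{\sigma^2}{\lambda_n}N(t)$ with the deterministic time $\sigma^2t$, to be where essentially all the work lies; the moment computations of the first paragraph and the final balancing of exponents are routine by comparison.
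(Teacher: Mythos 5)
Your proposal is correct, and its overall architecture --- couple $\xi^n$ with $W$ at a random ``operational'' time, show that this time concentrates around $\sigma^2 t$, then convert the time discrepancy into a spatial one via a quantitative modulus of continuity for $W$ --- is exactly the skeleton of the paper's proof. Where you genuinely differ is in the coupling mechanism itself. You work at the level of the compound Poisson structure, matching the jump-indexed partial sums $\sum_{i\le m}Z_i$ with Gaussian partial sums by a multivariate KMT/Zaitsev strong approximation, and then reconcile $\sigma^2 N(t)/\lambda_n$ with $\sigma^2 t$ by Poisson concentration. The paper instead discretises time into blocks of length $n^{-2\beta}$, so that the increments $X_i=\xi^n_{i/n^{2\beta}}-\xi^n_{(i-1)/n^{2\beta}}$ are i.i.d.\ and rotationally symmetric with $\E[|X_1|^2]=2\dim\sigma^2n^{-2\beta}$ and $\E[|X_1|^4]=\mc O(n^{-4\beta})$, and applies Skorohod's embedding theorem (see \cite{billingsley:1995}) to realise $\xi^n(i/n^{2\beta})=W(\upsilon_i)$ for i.i.d.\ embedding times with $\E[\upsilon_i-\upsilon_{i-1}]=\sigma^2n^{-2\beta}$ and $\E[(\upsilon_i-\upsilon_{i-1})^2]=\mc O(n^{-4\beta})$; Chebyshev on $\upsilon_{\lfloor tn^{2\beta}\rfloor}$ then plays the role of your Poisson concentration (and is where the $\mc O(n^{-\beta}t)$ failure probability and the $n^{-\beta/2}$ time window arise in both arguments), while the reflection principle handles the window at scale $n^{-\beta/6}$ with stretched-exponentially small error. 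What each route buys: yours avoids the overshoot term $|\xi^n(t)-\xi^n(\lfloor tn^{2\beta}\rfloor/n^{2\beta})|$ that the paper must estimate separately, and delivers a far stronger conclusion (a sup-norm coupling error $\mc O(n^{-\beta}\log n)$ along the whole path); but it imports a deep external theorem, and you must check that its constants are uniform over the $n$-dependent jump laws (rescale so the jumps are bounded by a constant with identity covariance --- isotropy makes this legitimate), and for uniformity in small $t$ (needed when the lemma is applied conditionally on $\tau$ in Corollary~\ref{cor:xi_W}) you should take the deterministic index bound to be $\lambda_n t+n^{3\beta/2}$ rather than $\mc O(\lambda_n t)$. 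The paper's route shows that the ``genuine obstacle'' you identify --- a pathwise coupling with explicit tails in $\dim\ge2$ --- can be sidestepped rather than confronted: because the targets $n^{-\beta/6}$ and $\mc O(n^{-\beta}(t\vee1))$ are so crude, second and fourth moments plus Chebyshev suffice. One small inaccuracy: the ``hands-on construction'' of \cite{etheridge/freeman/penington/straulino:2015} is not equivalent to a KMT embedding --- it is precisely the Skorohod-embedding argument that this paper follows --- though this does not affect the validity of your approach, since either tool suffices.
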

\begin{proof}
For $i\geq 1$, let $X_i = \xi^n _{i/n^{2\beta}} - \xi^n_{(i-1)/n^{2\beta}}$.
Then $X_1 , X_2, \ldots $ are i.i.d.~with a rotationally symmetric 
distribution and, by~\eqref{identifying sigma},
$\E[|X_1|^2]=2\dim \sigma^2 n^{-2\beta}$.
Moreover, by~\eqref{jump rate}, the number of jumps made by 
$\xi^n$ on the time interval $[0,n^{-2\beta}]$ is Poisson, with mean
$\Theta(1)$, so since each jump has magnitude at most $2 \mc R_n$,
$
\E \l[ |X_1 |^4 \r]  = \mc O (n^{-4\beta})$.
Then by Skorohod's second embedding Theorem, see e.g.~\cite{billingsley:1995},
there is a Brownian motion $W$ started at $x$ and a sequence 
$\upsilon_1, \upsilon_2, \ldots $ of stopping times such 
that setting $\upsilon_0=0$, $(\upsilon_i - \upsilon_{i-1})_{i \geq 1}$ 
are i.i.d.~and
\begin{align*}
W(\upsilon_i)&=\xi(i/n^{2\beta}),\hspace{2pc}
\E[\upsilon_i - \upsilon_{i-1}]=\tfrac{1}{2\dim}\E\l[ |X_1 |^2 \r]=\sigma^2 n^{-2\beta},\hspace{2pc}
\E[(\upsilon_i - \upsilon_{i-1})^2 ] = \mc{O}(n^{-4\beta}).
\end{align*} 
It follows that $\E[\upsilon_{\lfloor tn^{2\beta} \rfloor}]
=\sigma^2 \lfloor tn^{2\beta}\rfloor  n^{-2\beta}$ 
and $\text{Var}[\upsilon_{\lfloor tn^{2\beta} \rfloor}]=\mc{O}(t n^{-2\beta})$.
Hence by Chebychev's inequality,
\begin{equation}\label{eq:sync_time_skemd}
\P\l[|\upsilon_{\lfloor tn^{2\beta} \rfloor}- \sigma^2 t|
\geq n^{-\beta/2}\r]=\mc{O}(t n^{-\beta}).
\end{equation}
Now we have that
\begin{equation} \label{eq:xi_W_bound}
|\xi^n(t)-W(\sigma^2 t)|\leq |\xi^n(t)-\xi^n(\lfloor t n^{2\beta} \rfloor /n^{2 \beta})|
+|W(\upsilon_{\lfloor t n^{2\beta} \rfloor})-W(\sigma^2 t)|.
\end{equation}
To control the first term on the right hand side, observe that
\begin{equation} \label{eq:xi_W_1st}
\P \l[ |\xi^n(t)-\xi^n(\lfloor t n^{2\beta} \rfloor /n^{2 \beta})|
\geq n^{-\beta/6}/2\r]
\leq \E\l[ |X_1|^2 \r] (n^{-\beta/6}/2)^{-2}=\mc O (n^{-5\beta/3}). 
\end{equation}
To control the second term on the right hand side of~\eqref{eq:xi_W_bound}, 
let $Z\sim N(0,1)$, then
\begin{eqnarray} \label{eq:xi_W_2nd}
\P\bigg[|W(\upsilon_{\lfloor t n^{2\beta} \rfloor})-W(\sigma^2 t)|
&\geq& n^{-\beta /6}/2 \bigg]\leq 
\P\l[|\upsilon_{\lfloor tn^{2\beta} \rfloor}- \sigma^2 t|
\geq n^{-\beta/2}\r]\nonumber\\
&&+
\P\l[|\upsilon_{\lfloor t n^{2\beta} \rfloor }-\sigma^2 t| \leq n^{-\beta /2},
\, |W(\upsilon_{\lfloor t n^{2\beta} \rfloor})-W(\sigma^2 t)|
\geq n^{-\beta /6}/2 \r]\nonumber \\
&\leq& \P \l[\sup_{s\in [-n^{-\beta/2},n^{-\beta/2}]} |W(s)-W(0)| 
\geq n^{-\beta/6}/2 \r]+
\mc{O}(t n^{-\beta}).
\nonumber \\
&\leq& 4\, \dim\, \P \l[\sqrt 2 n^{-\beta/4}Z \geq n^{-\beta/6}/2\dim  \r]
+\mc{O}(t n^{-\beta}).
\nonumber \\
&=& \mc O (\exp (-\tfrac{1}{8\dim ^2} n^{\beta/6}) )
+\mc{O}(t n^{-\beta}).
\end{eqnarray}
Here, the second inequality follows by \eqref{eq:sync_time_skemd} and the third inequality follows by bounding the modulus of a 
$\dim$-dimensional Brownian motion by the sum of the moduli of 
$\dim$ one-dimensional Brownian motions and then using the reflection 
principle. 
Combining~\eqref{eq:xi_W_1st} 
and~\eqref{eq:xi_W_2nd} with~\eqref{eq:xi_W_bound} completes the proof.
\end{proof}

Next, we need the asymptotic distribution of an ancestral lineage and its first
branch time (that is the first time that it is affected by a selective event).
\begin{cor} \label{cor:xi_W}
Let $\tau$ be the first branch time of $\Xi^n$. There is a coupling of 
$\Xi^n$ and $W$ under which $\tau$ and $W$ are independent,
$\tau \sim \text{Exp}(\eta \epsilon_n^{-2} )$ where 
$\eta =u V_1\int_0^\mc{R}r^d\mu(dr)$,
and for $i=1,2,3$,
$$
\P\l[\xi^n_i (\tau)-W(\sigma ^2 \tau)| \geq 3n^{-\beta/ 6} \r]=
\mc O (n^{-\beta}).
$$
\end{cor}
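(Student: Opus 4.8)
The plan is to realise the first branch time $\tau$ as the first point of a Poisson process that is built \emph{independently} of the spatial motion, and then to transfer Lemma~\ref{lem:W_xin_close} from a fixed time to the random time $\tau$ by conditioning on $\tau$. First I would split the events affecting the lineage into two independent streams. Since each event covering the lineage is, independently of its spatial effect, declared selective with probability $\v{s}_n$, Poisson thinning lets me write the selective events as a Poisson process of rate $n^{2\beta}\eta\,\v{s}_n=\eta\epsilon_n^{-2}$ (using~\eqref{jump rate} and~\eqref{eq:sel_rate_slfvs}) and the neutral events as an independent Poisson process of rate $n^{2\beta}\eta(1-\v{s}_n)$, each event carrying an i.i.d.\ displacement from the normalised $m^n$. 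This immediately gives $\tau\sim\text{Exp}(\eta\epsilon_n^{-2})$. Because $\tau$ is the first \emph{selective} arrival, the lineage is moved only by neutral events on $[0,\tau)$; writing $\tilde\xi^n$ for the neutral-driven jump process, we have $\xi^n(\tau-)=\tilde\xi^n(\tau-)$, and $\tilde\xi^n$ is a function of the neutral stream alone, hence independent of $\tau$. Coupling $\tilde\xi^n$ to a Brownian motion $W$ via the Skorohod path embedding of Lemma~\ref{lem:W_xin_close} (the only change being the diffusivity $(1-\v{s}_n)\sigma^2$, so $\tilde\xi^n(t)\approx W((1-\v{s}_n)\sigma^2 t)$) produces a $W$ measurable with respect to $\tilde\xi^n$ and the embedding randomness, and therefore independent of $\tau$, as required.

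With this coupling fixed I would estimate, for each $i\in\{1,2,3\}$,
\begin{equation*}
\l|\xi^n_i(\tau)-W(\sigma^2\tau)\r|
\leq \l|\xi^n_i(\tau)-\xi^n(\tau-)\r|
+\l|\tilde\xi^n(\tau-)-W((1-\v{s}_n)\sigma^2\tau)\r|
+\l|W((1-\v{s}_n)\sigma^2\tau)-W(\sigma^2\tau)\r|,
\end{equation*}
and show that each term is at most $n^{-\beta/6}$ with the advertised probability, so that their sum is at most $3n^{-\beta/6}$. The first term is deterministic: the three offspring are drawn uniformly in the ball of the selective event, which contains $\xi^n(\tau-)$ and has radius at most $\mc R_n=n^{-\beta}\mc R$, so the term is $\leq 2\mc R_n\leq n^{-\beta/6}$ for $n$ large. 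For the second term I condition on $\tau=t$; since $(\tilde\xi^n,W)$ is independent of $\tau$ and $\tilde\xi^n(t-)=\tilde\xi^n(t)$ almost surely, the (mildly modified) Lemma~\ref{lem:W_xin_close} gives conditional failure probability $\mc O(n^{-\beta}(t\vee 1))$; integrating against the law of $\tau$ and using $\E[\tau\vee 1]\leq 1+\epsilon_n^2/\eta=\mc O(1)$ yields $\mc O(n^{-\beta})$.

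The third term is the part I expect to be the chief obstacle, as it is precisely the price of decoupling $\tau$ from $W$: the time-change discrepancy between diffusivities $(1-\v{s}_n)\sigma^2$ and $\sigma^2$. Conditionally on $\tau=t$ it is the modulus of a $\dim$-dimensional Gaussian increment over the time gap $\v{s}_n\sigma^2 t$, of typical size $\mc O((\v{s}_n t)^{1/2})$. For typical $t\sim\epsilon_n^2$ one has $\v{s}_n t\sim n^{-2\beta}$, so the increment is $\mc O(n^{-\beta})$ and the Gaussian tail bound makes $\P[\,\cdot\geq n^{-\beta/6}\mid\tau=t]$ super-polynomially small; the increment can only be large if $\tau$ is atypically large, of order $n^{5\beta/3}\epsilon_n^2$, which by the exponential law of $\tau$ carries probability $\exp(-\Theta(n^{5\beta/3}))$. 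Integrating against the law of $\tau$ therefore again gives $\mc O(n^{-\beta})$. The Corollary's independence of $\tau$ and $W$ and the exponential law of $\tau$ are then both immediate from the construction, completing the proof.
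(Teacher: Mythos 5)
Your proposal is correct and follows essentially the same route as the paper's proof: thinning the events driving the lineage into independent selective and neutral Poisson streams (giving the exponential law of $\tau$ and its independence from the neutral-driven motion), coupling the neutral jump process to $W$ via Lemma~\ref{lem:W_xin_close} with intensity $(1-\v{s}_n)m_n(dz)$, and then bounding the three triangle-inequality terms, with the offspring displacement controlled deterministically by $2\mc{R}_n$. The only cosmetic difference is that you handle the time-change discrepancy $|W((1-\v{s}_n)\sigma^2\tau)-W(\sigma^2\tau)|$ with a Gaussian tail bound, where the paper uses Chebyshev's inequality together with $\v{s}_n=o(\log n/n^{2\beta})$; both suffice.
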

\begin{proof}
The distribution of $\tau$ follows immediately from~\eqref{eq:sel_rate_slfvs}.

Now consider any ancestral lineage $\xi^n\subseteq\Xi^n$. 
By the thinning property of Poisson processes, at any time $t>0$,
we can write
$\xi^n_t=\xi^{n,\tt{sel}}_t+\xi^{n,\tt{neu}}_t$, where $\xi^{n,\tt{sel}}_t$
and $\xi^{n,\tt{neu}}_t$ are independent pure jump processes with jump
intensities $\v{s}_nm_n(dz)$ and $(1-\v{s}_n)m_n(dz)$ respectively,
and taking $\tau$ to be the first jump time of $\xi^{n,\tt{sel}}$,
$\xi^{n,\tt{neu}}_t$ is independent of $\tau$. Using 
Lemma~\ref{lem:W_xin_close} with $(1-\v{s}_n)m_n(dz)$ in place of $m_n(dz)$,
we can couple $\xi^{n,\tt{neu}}$ with a Brownian motion $W$ in such a way that for any $t>0$,
for any $t>0$,
$$\P[|\xi^{n,\tt{neu}}_t-W(\sigma^2(1-\v{s}_n)t)|\geq n^{-\beta/6}]\leq 
\mc O (n^{-\beta}(t\vee 1)).$$
Since $\v{s}_n=o (\log n/n^{2\beta})$, using Chebyshev's inequality, 
$$\P[|W(\sigma^2 t)-W(\sigma^2(1-\v{s}_n)t)|\geq n^{-\beta/6}]
= o \left(\frac{\log n}{n^{2\beta}}n^{\beta/3}(t\vee 1)\right),$$
and so using the triangle inequality
$$
\P \l[|\xi^n (\tau-)-W(\sigma^2 \tau )| \geq 2n^{-\beta/6 } \bigg| \tau \r]=\mc O(n^{-\beta} (\tau \vee 1)).
$$
Since $\E[\tau]=\Theta (\epsilon_n^2)=o(1)$, and for $i=1,2,3$, $|\xi^n_i(\tau)-\xi^n_1(\tau-)|\leq 2 \mc R_n=2n^{-\beta}\mc R$ the result follows.
\end{proof}

\subsubsection{Independence after branching}
\label{sec:slfvs_indep_after_branching}

We now define a modification of $\Xi^n(t)$ which we denote by
$\Psi^n(t)$ in which lineages 
evolve independently after branching (so, in particular, do not coalesce) 
and then
show that $\Xi^n(t)$ and $\Psi^n(t)$ can be coupled in such a way
that they coincide with high probability.

\begin{defn}[Branching jump process]
\label{def:slfvs_dual_no_coal}
For given $n\in \N$ and starting point $x\in \R^\dim$, 
$(\Psi^n(t), t \geq 0)$ is the 
historical process of the branching random walk which is described as follows.
\begin{enumerate}
\item
Each individual has an independent exponential lifetime with 
parameter $\eta\epsilon_n^{-2}$. 
\item During
its lifetime, each individual, independently, evolves according to 
a pure jump process with jump intensity $(1-\v{s}_n)m_n(dz)$. 
\item
At the end of its lifetime an individual branches into three offspring.
\item The locations of the offspring are determined as follows.
For each branching event, independently, pick $r\in (0, \mc{R}_n]$
according to $r^\dim \mu^n(dr)/\int_0^{\mc{R}_n}r^\dim \mu^n(dr)$. If the parent is at
the point $z\in\R^\dim$, then each of the three offspring, independently, 
samples its location uniformly from $B_r(z)$.
\end{enumerate}
\end{defn}
\begin{remark}
Note that the only difference between the distributions of $\Xi^n$ and $\Psi^n$ is that in $\Psi^n$, lineages evolve independently after branching, whereas in $\Xi^n$, two distinct lineages may be hit by the same event in $\Pi_n$.
\end{remark}

We define $\Vote_p(\Psi^n(t))$ in the usual way (as in Definition~\ref{vote_defn}): a leaf at location
$\psi_i(t)\in\R^\dim$
votes $1$ with probability $p(\psi_i(t))$, otherwise it votes zero, and votes from different leaves are independent; working
back through the tree
an individual adopts the vote of the majority of its offspring and 
$\Vote_p(\Psi^n(t))$ is the resultant vote at the root.

\begin{lemma}\label{lem:P=D}
Let $T^*\in(0,\infty)$, $k\in\N$ and $z\in\R^{\dim}$. 
There exists $n_*\in \N$ such that for all $n\geq n_*$, there is a 
coupling of $\Xi^n$ started from $z$ and $\Psi^n$ started from $z$ such 
that with probability at least $1-\epsilon_n^k$ we have 
$$
\Xi^n(T^*)=\Psi^n(T^*).
$$
\end{lemma}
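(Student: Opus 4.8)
The plan is to realise $\Xi^n$ and $\Psi^n$ on a common probability space, both driven by the Poisson point process $\Pi^n$, and to show that they coincide up to time $T^*$ unless some single event of $\Pi^n$ ever \emph{marks two or more distinct lineages} before $T^*$. As long as every event marks at most one of the currently present lineages, the only features distinguishing the two processes---coalescence at neutral events, and the sharing of a common triple of offspring at selective events---never come into play, so one may read off the updates of $\Xi^n$ and $\Psi^n$ from the same events while keeping the single-lineage motions and branch times identical (their marginal laws agree, as noted in the Remark following Definition~\ref{def:slfvs_dual_no_coal}). Writing $\mathcal{C}$ for the first time that an event marks at least two lineages, we then have $\Xi^n(T^*)=\Psi^n(T^*)$ on $\{\mathcal{C}>T^*\}$, and the lemma reduces to the estimate $\P[\mathcal{C}\leq T^*]\leq \epsilon_n^k$.

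To bound $\P[\mathcal{C}\leq T^*]$ I would first control the number of lineages. By~\eqref{eq:sel_rate_slfvs} each lineage is hit by a selective (marking) event at rate $\eta\epsilon_n^{-2}$, so the number $N(s)$ of lineages present at time $s$ is dominated by a pure-birth process in which every individual is replaced by three at rate $\eta\epsilon_n^{-2}$; in particular $\E[N(s)]=e^{2\eta\epsilon_n^{-2}s}$ and $\E[N(s)^2]=e^{O(\epsilon_n^{-2}s)}$. The hypothesis $(\log n)^{1/2}\epsilon_n\to\infty$ gives $\epsilon_n^{-2}=o(\log n)$, whence $\E[N(T^*)^2]=e^{o(\log n)}=n^{o(1)}$, and since $N$ is non-decreasing, $\E\!\left[\int_0^{T^*}\binom{N(s)}{2}\,ds\right]\leq T^*\,\E\!\left[\binom{N(T^*)}{2}\right]=n^{o(1)}$.

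Next I would estimate the instantaneous rate at which a fixed pair of lineages is marked by a common event. Such a pair can only be covered by a single event while they lie within $2\mathcal{R}_n=2n^{-\beta}\mathcal{R}$ of one another, and, exactly as in the computation leading to~\eqref{jump rate}, the rate of events covering a given point is $n\,n^{\dim\beta}\int_0^{\mathcal{R}_n}V_r\,\mu^n(dr)=O(n)$; since each lineage is marked independently with probability $u_n$, the rate at which both are marked is $O(n u_n^2)$, which with $u_n=u/n^{1-2\beta}$ from~\eqref{scalings} equals $O(n^{4\beta-1})$. Combining this per-pair rate with the moment bound via a first-moment (Mecke) computation,
\begin{equation*}
\P[\mathcal{C}\leq T^*]\leq \E[\#\{\text{collisions in }[0,T^*]\}]\leq O(n u_n^2)\,\E\!\left[\int_0^{T^*}\binom{N(s)}{2}\,ds\right]=n^{4\beta-1+o(1)}.
\end{equation*}
Because $\beta<1/4$ we have $4\beta-1<0$, so this decays polynomially in $n$, whereas $\epsilon_n^k\gg(\log n)^{-k/2}$ decays only poly-logarithmically; hence $\P[\mathcal{C}\leq T^*]\leq \epsilon_n^k$ for all $n\geq n_*(k)$, which is precisely the claim.

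The main obstacle is the time-integrated bookkeeping in the last display: lineages are created and (in $\Xi^n$) removed throughout $[0,T^*]$, so one cannot simply multiply a fixed number of pairs by a per-pair probability but must integrate the instantaneous collision rate against the random, time-varying count $\binom{N(s)}{2}$ and take expectations, using that coalescence only \emph{decreases} the $\Xi^n$ count (so it remains dominated by the branching process above) and that the slow branching rate $\eta\epsilon_n^{-2}=o(\log n)$ is exactly what keeps $\E[N(T^*)^2]$ subpolynomial. A secondary point requiring care is making the coupling itself precise---verifying that, away from multiply-marked events, the single-lineage displacement laws and the branch mechanisms of $\Xi^n$ and $\Psi^n$ can be driven to agree exactly, so that the two historical processes really coincide on $\{\mathcal{C}>T^*\}$.
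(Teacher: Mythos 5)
Your collision estimate is correct and is essentially the paper's: a single event can cover a given pair of lineages only when they are within $2\mc{R}_n$ of each other, events covering a given point arrive at rate $\Theta(n)$, and both members of a covered pair are marked with probability $u_n^2$, giving a per-pair rate of $\mc{O}(nu_n^2)=\mc{O}(n^{4\beta-1})$; combined with a subpolynomial bound on the number of pairs (your second-moment bound for the dominating Yule process plays the role of the paper's containment of the genealogy in $\mc{T}^{\text{reg}}_{b\log n}$ plus a union bound over pairs of rays in Lemma~\ref{lem:no_nearby}), and with $n^{4\beta-1+o(1)}=o(\epsilon_n^k)$ because $\epsilon_n^{-2}=o(\log n)$, this quantitative half of your argument is sound.

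The genuine gap is the step you set aside as ``secondary'': the coupling itself, which is precisely where the paper needs a new idea, namely the pre-emptive dual of Definition~\ref{def:slfvs_dual_preemptive}. Reading $\Psi^n$ off the same events and the same marking coins as $\Xi^n$ does not produce a process with the law of Definition~\ref{def:slfvs_dual_no_coal}: whenever a common event marks two lineages, both would jump or branch at the same instant, an event which for genuinely independent lineages has probability zero. Nor can this be repaired by agreeing only up to the first double-marking time $\mathcal{C}$ and continuing with fresh randomness afterwards. Indeed, writing $\lambda_i$ for the rate of events covering lineage $i$ and $\lambda_b$ for the rate of events covering a given pair, before $\mathcal{C}$ each lineage of the killed process receives solo updates at rate $u_n\lambda_i-u_n^2\lambda_b$ rather than the rate $u_n\lambda_i$ demanded by $\Psi^n$; consequently, on (say) the event that the initial individual branches once and none of its three offspring is ever affected by a further event before $T^*$, the sub-probability measure $\P\l[\Xi^n\in\cdot\,,\,\mathcal{C}>T^*\r]$ exceeds $\P\l[\Psi^n\in\cdot\,\r]$ by a factor $\exp\big(\sum_{\text{pairs}}\int u_n^2\lambda_b\,ds\big)>1$. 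Since any coupling that is equal on $\{\mathcal{C}>T^*\}$ forces $\P\l[\Psi^n\in\cdot\,\r]\geq\P\l[\Xi^n\in\cdot\,,\,\mathcal{C}>T^*\r]$, no coupling with exact marginals can agree with $\Xi^n$ on the whole of $\{\mathcal{C}>T^*\}$: conditioning on ``no double-marking'' tilts the law of the surviving motion. The paper's pre-emptive construction removes exactly this obstruction: marks are carried by the lineages and refreshed, independently across lineages, at each covering event, so whether the next event causes an interaction is determined by the current state rather than by coins flipped at that event; the first simultaneous-marking time $\tau$ is then a stopping time, every event strictly before $\tau$ affects at most one lineage, and the evolution up to $\tau$ can be identified with independent-lineage dynamics, which is what licenses ``couple to be equal until $\tau$'' in the paper's proof, with Lemma~\ref{lem:no_nearby} supplying $\P[\tau\leq T^*]=\mc{O}(n^{-\alpha})=o(\epsilon_n^k)$. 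To rescue your route one would instead have to bound the total variation distance between the two path laws (for example by a Radon--Nikodym or generator comparison) and invoke a maximal coupling, which is a genuinely different and more technical argument than the one you sketched.
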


The remainder of this section is devoted to proof of Lemma~\ref{lem:P=D}. 
To do so, we consider a slightly different description of
the dual of the {\slfvs}, 
which will preserve the distribution of $\Xi^n$. 
\begin{defn}[Pre-emptive SLFVS dual]
\label{def:slfvs_dual_preemptive} 
For $n\in\N$, the process $(\tilde{\mathcal P} ^n _t)_{t\geq 0}$ is a 
$\bigcup_{l\geq 1}(\R^{\dim})^l$-valued process of individuals, each of which
may be marked. 
The dynamics are described as follows.

The process is started with a single individual 
at the point $x$ and we write
$(\xi^n_1(t),\ldots , \xi^n_{N(t)}(t))$ for the locations of the random number
$N(t)$ of individuals
at time $t$.

At time zero, independently of all else, the individual $\xi^n_1(0)$ is
marked with probability $u_n$.

At each event $(t,x,r)\in \Pi^n$, independently, 
the event is said to be neutral with probability $1-\v{s}_n$. In this case: 
\begin{enumerate}
\item if at least one individual $\xi^n_i(t-)\in \mc{B}_r(x)$ is marked,
then all marked individuals in $\mc{B}_r(x)$ are replaced by a single
offspring individual, whose location is drawn uniformly at random 
from within $\mc{B}_r(x)$;
\item for each $\xi^n_i(t)\in \mc{B}_r(x)$, including the offspring
individual if any, independently mark 
the corresponding 
individual with probability $u_n$ and unmark it otherwise.
\end{enumerate}
With the complementary probability $\v{s}_n$, the event is said to be 
selective, in which case:
\begin{enumerate}
\item if at least one individual $\xi^n_i(t-)\in \mc{B}_r(x)$ is marked,
the collection of marked individuals in $\mc{B}_r(x)$ is replaced by 
{\em three} offspring
individuals, whose locations are drawn independently and uniformly from
within $\mc{B}_r(x)$;
\item for each $\xi^n_i(t)\in \mc{B}_r(x)$, including the offspring individuals
if any,
independently mark the corresponding
individual with probability $u_n$ and unmark it otherwise.
\end{enumerate}
In between events in $\Pi_n$, nothing happens. In particular, once marked, an 
individual remains marked until it is in the region covered by an event, and,
during events, all individuals in the affected region
(whether they were marked before the event of not) sample afresh from 
independent Bernoulli random variables to decide whether they are marked 
immediately after the event.
\end{defn}
In the same way as we defined $\Xi^n$, ignoring marks, 
we write $\Phi^n$ for the
historical process corresponding to the pre-emptive dual.
The distribution of $\Phi^n$ is equal to that of $\Xi^n$. 
The only difference between Definition~\ref{def:slfvs_dual}
and Definition~\ref{def:slfvs_dual_preemptive} is that, 
for each reproduction event, 
whether or not a individual that lies in the affected region is marked
for reproduction was determined at the time of the previous reproduction
event that affected a region in which it lies. 
Notice that for both neutral and selective events,
even if no individual is marked at time $t-$, all individuals
in $\mc{B}_r(x)$ at time $t$ (after the reproduction has taken place),
independently, renew their status as marked or unmarked.

The key observation that will allow us to couple $\Xi^n$ (or 
equivalently $\Phi^n$) and $\Psi^n$ is that
for as long as two ancestral lineages are not both marked, 
they evolve independently. 

\begin{lemma}\label{lem:no_nearby} Let $T^*\in(0,\infty)$. 
There exists $\alpha>0$ such that 
$$\P\l[\exists \xi^n_{\v i} \neq \xi^n_{\v j}\subseteq\Phi^n(T^*), t\in[0,T^*]
\text{ such that }\xi^n_{\v i}\text{ and }\xi^n_{\v j}\text{ are both marked at time }t\r]
=\mc{O}(n^{-\alpha}). $$
\end{lemma}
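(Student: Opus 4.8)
The plan is to work throughout with the pre-emptive dual $\Phi^n$, which has the same law as $\Xi^n$ but is designed so that marks are \emph{pre-sampled}: the mark carried by a lineage is refreshed as an independent $\mathrm{Bernoulli}(u_n)$ variable at every event of $\Pi^n$ covering it. Two consequences drive the whole argument. First, at any fixed time $s$ each lineage is marked with probability exactly $u_n$, since its current mark is the Bernoulli outcome sampled at the last event to have covered it. Second, and crucially, any two distinct lineages evolve as two \emph{independent} copies of the dual up to the first time $T_{\v i\v j}$ at which both simultaneously carry a mark: as long as at most one of them is marked, a common covering event moves or re-marks that one lineage exactly as in its own independent dynamics and leaves the other's position untouched, and their marks are always resampled independently. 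Hence it suffices to bound $\P[T_{\v i\v j}\le T^*]$ treating $\xi^n_{\v i}$ and $\xi^n_{\v j}$ as independent, and then to union bound over all pairs present by time $T^*$.

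First I would record the two quantitative ingredients. A fixed point is covered by events of $\Pi^n$ at rate $\mc O(n)$ (by a computation analogous to that giving~\eqref{jump rate}: $n\,n^{\dim\beta}\int_0^{\mc R_n}V_r\,\mu^n(dr)=nV_1\int_0^{\mc R}r^{\dim}\mu(dr)$), so the mark of a single lineage changes only at an $\mc O(n)$-rate stream of events, and its unmarked-to-marked transition rate is $\mc O(n)\,u_n=\mc O(n^{2\beta})$. Secondly, the total number of lineages $N(T^*)$ is dominated by a pure-birth process in which each individual splits into three at the selective branching rate $\eta\epsilon_n^{-2}$ of~\eqref{eq:sel_rate_slfvs}; thus $\E[N(T^*)]\le e^{2\eta\epsilon_n^{-2}T^*}$, and the hypothesis $(\log n)^{1/2}\epsilon_n\to\infty$ gives $\epsilon_n^{-2}=o(\log n)$, whence $\E[N(T^*)]=n^{o(1)}$. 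By Markov's inequality, for any fixed $\gamma>0$ we have $N(T^*)\le n^{\gamma}$ off an event of probability $n^{o(1)-\gamma}$.

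For the per-pair bound I would estimate the expected number of maximal intervals on which $\xi^n_{\v i}$ is marked and during which $\xi^n_{\v j}$ is also marked at some instant; by independence this dominates $\P[T_{\v i\v j}\le T^*]$. The number of marked intervals of $\xi^n_{\v i}$ in $[0,T^*]$ has expectation $\mc O(n^{2\beta})$ (one per unmarked-to-marked transition), each has expected length $\mc O(n^{-1})$ (it ends at the next covering event, which almost surely unmarks it), and at any instant $\xi^n_{\v j}$ is marked with probability $u_n$. Combining, the probability that $\xi^n_{\v j}$ is marked at some point of a given such interval is $u_n+\mc O(n^{2\beta})\cdot\mc O(n^{-1})=\mc O(n^{2\beta-1})$, so the expected number of overlaps is $\mc O(n^{2\beta})\cdot\mc O(n^{2\beta-1})=\mc O(n^{4\beta-1})$, giving $\P[T_{\v i\v j}\le T^*]=\mc O(n^{4\beta-1})$ with constant depending only on $T^*$ and the model parameters. (The degenerate contribution from the two offspring of a single branch being marked at birth is only $\mc O(u_n^2)=\mc O(n^{4\beta-2})$ per branch and is absorbed into this estimate.)

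Finally I would union bound. On $\{N(T^*)\le n^{\gamma}\}$ there are at most $n^{2\gamma}$ pairs, so the probability that some pair is ever simultaneously marked is $\mc O(n^{2\gamma+4\beta-1})$; adding the atypical event $\{N(T^*)>n^{\gamma}\}$ of probability $n^{o(1)-\gamma}$ yields a total of $\mc O(n^{2\gamma+4\beta-1})+n^{o(1)-\gamma}$. Since $\beta<1/4$ gives $1-4\beta>0$, any $\gamma\in\big(0,(1-4\beta)/2\big)$ makes both exponents negative and proves the claim with, for instance, $\alpha=\tfrac12\min(\gamma,\,1-4\beta-2\gamma)$. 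The step I expect to be most delicate is turning the heuristic ``rate of becoming simultaneously marked'' into a rigorous bound while respecting the dependence between nearby lineages; this is exactly what the independence-up-to-$T_{\v i\v j}$ property of the pre-emptive dual is built to circumvent, reducing the estimate to two genuinely independent lineages so that the interval-overlap count becomes routine. The constraint $\beta<1/4$ enters precisely in keeping the per-pair bound $\mc O(n^{4\beta-1})$ summable against the $n^{o(1)}$ pairs, while $\epsilon_n^{-2}=o(\log n)$ is what keeps the number of lineages subpolynomial.
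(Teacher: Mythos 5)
Your proposal follows the same skeleton as the paper's proof: work in the pre-emptive dual, prove a per-pair bound of order $nu_n^2=n^{4\beta-1}$, show the genealogy is subpolynomial because $\epsilon_n^{-2}=o(\log n)$, and close with a union bound using $4\beta-1<0$. Your per-pair estimate, though organised as an interval-overlap first-moment computation rather than the paper's direct union bound over covering events, reaches the same (correct) bound $\mc{O}(n^{4\beta-1})$. The genuine gap is in the final union bound. The per-pair bound is proved for a pair of lineages with \emph{fixed} Ulam--Harris labels, so to bound the probability that \emph{some} realized pair is simultaneously marked you must union over a \emph{deterministic} family of label pairs which, with high probability, contains every pair realized by time $T^*$. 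The event $\{N(T^*)\le n^\gamma\}$ supplied by Markov's inequality does not give such a family: a ternary tree with at most $n^\gamma$ leaves can still have depth of order $n^\gamma$ (each branching adds only two leaves), so on this event the realized labels may a priori form any subset of $\bigcup_{m\le (n^\gamma-1)/2}\{1,2,3\}^m$ --- exponentially many candidates, not $n^{2\gamma}$. The step ``on $\{N(T^*)\le n^\gamma\}$ there are at most $n^{2\gamma}$ pairs, so the probability that some pair is ever simultaneously marked is $\mc{O}(n^{2\gamma+4\beta-1})$'' therefore multiplies a per-pair bound by the cardinality of a \emph{random} collection; to make it legitimate you would need a per-pair bound conditional on the realized tree, and that is precisely what the pre-emptive construction does not give you, since the tree is itself a function of the marks at selective events.

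The missing ingredient, and what the paper does instead, is control of the \emph{depth} rather than the number of leaves: along any fixed ray, the number of branching events by time $T^*$ is Poisson with mean $\eta\epsilon_n^{-2}T^*=o(\log n)$, so a Chernoff bound together with a union over the $3^{b\log n}$ rays gives $\mc{T}(\Phi^n(T^*))\subseteq\mc{T}^{\text{reg}}_{b\log n}$ with probability at least $1-3^{-b\log n}$; the union bound then runs over the at most $3^{2b\log n}=n^{2b\log 3}$ deterministic label pairs of this regular tree, and choosing $b$ with $2b\log 3+4\beta-1<0$ finishes the proof. Your moment bound on $N(T^*)$ becomes superfluous once this is in place. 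A secondary caveat: your reduction ``treat $\xi^n_{\v i}$ and $\xi^n_{\v j}$ as independent up to $T_{\v i\v j}$'' claims more than the construction delivers, because when the two lineages lie in the region of a common event their marks are resampled at the same instant (with independent outcomes), so the pair does not have the law of two independent copies. Your interval computation survives in spirit, but the clean statement --- and the only one the paper's per-pair argument uses --- is that immediately after any event the two marks are independent $\mathrm{Bernoulli}(u_n)$ variables, so that each of the Poisson$(\Theta(n))$ many events covering one of the two lineages has probability at most $u_n^2$ of being the first moment at which both are marked.
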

\begin{proof}
Write $\mc T (\Phi^n(t))$ for the genealogy of $\Phi^n(t)$.
We begin by showing that for any constant $b>0$, 
$\mathcal T (\Phi ^n(T^*)) \subseteq \mathcal T^{\text{reg}}_{b\log n}$ 
with high probability.
Recall from~\eqref{eq:sel_rate_slfvs} that the rate at which each 
lineage is affected by reproduction events is 
$\eta \epsilon_n^{-2}=o(\log n)$.
Let $M^n$ be a Poisson distributed random variable with mean 
$T^* \eta \epsilon_n^{-2}$. 
Recall that if $Z'$ is Poisson with parameter $\chi$, then 
(using a Chernoff bound) for $k>\chi$,
\begin{equation}
\label{poisson tail}
\P[Z'>k]\leq \frac{e^{-\chi}(e\chi)^k}{k^k}.
\end{equation} 
Hence for $b>0$ a constant, taking $n$ sufficiently large that
$\tfrac{e\chi}{b\log n}\leq 3^{-2}$, applying~\eqref{poisson tail} with 
$k=b\log n$ and $\chi=T^* \eta \epsilon_n^{-2}=o(\log n)$, we have
$$\P \l[ M^n> b \log n \r] \leq 3^{-2b \log n}. $$
Then by a union bound over each root to leaf ray of 
$\mathcal T^{\text{reg}}_{b\log n}$,
\begin{equation}
\label{tree not containing regular tree}
\P \l[ \mathcal T (\Phi ^n(T^*)) \nsubseteq \mathcal T^{\text{reg}}_{b\log n} \r] \leq 3^{b\log n} \P \l[ M^n > b\log n\r]\leq 3^{-b\log n}.
\end{equation}

Given a particular pair of lineages, $\xi^n_{\v i},\xi^n_{\v j}\subseteq\Phi^n(t)$, 
we want to bound 
above the probability that a reproduction event occurs during $[0,T^*]$ 
after which both are marked.
The first time that this happens, at least one of $\xi^n_{\v i}$ and $\xi^n_{\v j}$
must be in the region affected by the event. After the event, the 
probability that both lineages are marked is $u_n^2$ (irrespective
of whether the second lineage was also in the affected region).
The number of reproduction events before time $T^*$ with region 
containing $\xi^{n}_{\v i}$ is Poisson with mean  $\Theta(n)$.  
Hence, the probability that a given pair $\xi^n_{\v i}, \xi^n_{\v j}$ are both marked at some time $t\in [0,T^*]$ is $\mc{O}(nu_n^2)=\mc{O}(n^{4\beta-1})$.  

Using a union bound over pairs of lineages, we have
\begin{align*}
&\P\l[\exists \xi^n_{\v i} \neq \xi^n_{\v j}\subseteq \Phi^n(T^*)\mbox{ and } 
t\in[0,T^*]\text{ such that }\xi^n_{\v i}\text{ and }\xi^n_{\v j}\text{ are both marked at time }t\r]\\
&\hspace{3pc}\leq 3^{-b\log n} + 3^{2b\log n} \mc{O}(n^{4\beta-1})\\
&\hspace{3pc}\leq 3^{-b\log n}+\mc O\l(\exp\big(2b(\log 3)(\log n)+(4\beta-1)\log n\big)\r).
\end{align*}
Noting that $4\beta-1<0$ and choosing $b$ such that $2b(\log 3)+(4\beta-1)<0$ gives the required result.
\end{proof}

\begin{proof}[Of Lemma~\ref{lem:P=D}.]
Let 
$$\tau=\inf\{t\geq 0:\exists \xi^n_{\v i} \neq \xi^n_{\v j}\subseteq\Phi^n(T^*) 
\text{ such that }\xi^n_{\v i}\text{ and }\xi^n_{\v j}\text{ are both marked at time }t\}.$$ 
Noting that for any $k\in\N$ and any 
$\alpha>0$ we have $n^{-\alpha}=o((\log n)^{-k/2})=o(\epsilon_n^k)$, 
by Lemma~\ref{lem:no_nearby}, $\P[\tau\geq T^*]\geq 1-\epsilon_n^k$. 
For as long as ancestral lineages in $\Phi^n$ are not both marked they 
evolve independently, so we may couple $(\Phi^n(t))$ and $(\Psi^n(t))$ to 
be equal up until time $\tau$ and the result follows. 
\end{proof}

\subsubsection{Generation of the interface}
\label{sec:slfvs_generation}

In this section we show that, in analogy to Proposition~\ref{prop:d=2_generation}, the interface is generated in time of order $\epsilon_n ^2|\log \epsilon_n|$. 
The proof is similar to that of Proposition~\ref{prop:d=2_generation}.

\begin{prop} \label{prop:d=2_generation_slfvs}
Let $k\in\N$. Then there exist $n_*(k),a_*(k),d_*(k)>0$ such that, for all $n\geq n_*$, 
if we set
\begin{equation}\label{eq:delta_slfv}
\delta_*(k,n) := a_*(k)\epsilon_n^2|\log\epsilon_n| 
\textrm{ and }\quad \delta'_*(k,n) 
:= (a_*(k)+\eta^{-1}(k+1))\epsilon_n^2|\log\epsilon_n|,
\end{equation}
then for $t\in [\delta_*, \delta'_*]$,
\begin{enumerate}
\item for $x$ such that $d(x,\sigma^2 t)\geq d_*\epsilon |\log \epsilon|$, we have $\P_x\l[\Vote_p (\Xi^n(t))=1\r]\geq 1-\epsilon_n^k$;
\item for $x$ such that $d(x,\sigma^2 t)\leq -d_*\epsilon |\log \epsilon|$, we have $\P_x\l[\Vote_p (\Xi^n(t))=1\r]\leq \epsilon_n^k$.
\end{enumerate}
\end{prop}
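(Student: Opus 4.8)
The plan is to transcribe the proof of Proposition~\ref{prop:d=2_generation} into the SLFVS setting, the only genuinely new ingredients being a reduction from $\Xi^n$ to the independent-branching process $\Psi^n$ of Definition~\ref{def:slfvs_dual_no_coal} (which restores the conditional independence of subtree votes that the one-dimensional argument relies on), a single-lineage displacement estimate for the jump dynamics in place of the Gaussian tail bound of Lemma~\ref{all_particles_bound}, and careful bookkeeping of the time-change $t\mapsto\sigma^2 t$ forced by the fact that each lineage diffuses with coefficient $\sigma^2$. I would fix $k$ and prove only the first statement; the second follows by the same argument with the two sides of {\Cb} interchanged. By Lemma~\ref{lem:P=D} I may work with $\Psi^n$ in place of $\Xi^n$: for $n\geq n_*$ the processes are coupled to agree on $[0,T^*]\supseteq[0,\delta'_*]$ with probability at least $1-\epsilon_n^k$, and on this event $\Vote_p(\Xi^n(t))=\Vote_p(\Psi^n(t))$. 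In $\Psi^n$ each individual has an $\mathrm{Exp}(\eta\epsilon_n^{-2})$ lifetime and branches into three, so $\mc T(\Psi^n(t))$ is exactly a continuous-time ternary tree; the Cram\'er argument of Lemma~\ref{ternary_tree}, with mean lifetime $\eta^{-1}\epsilon_n^2$ replacing $\epsilon^2$, then produces $a_*(k)$ and $n_*(k)$ so that, with $A(k)$ as in Lemma~\ref{g_iteration},
\begin{equation*}
\P\l[\mc T(\Psi^n(t))\supseteq \mc T^{reg}_{A(k)|\log\epsilon_n|}\r]\geq 1-\epsilon_n^k\qquad\text{for all }t\geq a_*\epsilon_n^2|\log\epsilon_n|.
\end{equation*}

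Next I would prove the displacement bound: for $t\in[\delta_*,\delta'_*]$,
\begin{equation*}
\P_x\l[\exists\, i\in N(t):\ |\psi_i(t)-x|\geq \tilde d\,\epsilon_n|\log\epsilon_n|\r]\leq \epsilon_n^k .
\end{equation*}
Since motion and branching are independent in $\Psi^n$, the many-to-one lemma bounds the left side by $\E[|N(t)|]\,\P[|X_t-x|\geq \tilde d\,\epsilon_n|\log\epsilon_n|]$, where $X$ is a single lineage. On $[\delta_*,\delta'_*]$ we have $\E[|N(t)|]=e^{2\eta\epsilon_n^{-2}t}\leq \epsilon_n^{-2(\eta a_*+k+1)}$, which is only polynomially large in $\epsilon_n^{-1}$, so it suffices to control the single-lineage tail. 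Here $X_t-x$ is a martingale whose increments, coming both from the motion jumps and from the offspring-sampling displacements at branch points, are all bounded by $2\mc R_n=2n^{-\beta}\mc R$, while its total variance is $\Theta(\sigma^2 t)=\mc O(\epsilon_n^2|\log\epsilon_n|)$ by~\eqref{identifying sigma}. As $n^{-\beta}\to 0$ the increments are negligible on the scale $\epsilon_n|\log\epsilon_n|$, so Bernstein's inequality (equivalently, the Skorokhod coupling of Lemma~\ref{lem:W_xin_close}) yields a bound of the form $\epsilon_n^{c\tilde d^2}$ for a constant $c=c(\sigma^2,\dim)>0$. Choosing $\tilde d$ large enough that $c\tilde d^2-2(\eta a_*+k+1)\geq k$ closes this step, and I then set $d_*=2\tilde d$.

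Finally I would combine these as in Proposition~\ref{prop:d=2_generation}. On the intersection of the three good events above, which fails with probability $\mc O(\epsilon_n^k)$, every leaf satisfies $|\psi_i(t)-x|\leq \tilde d\,\epsilon_n|\log\epsilon_n|$, so for $x$ with $d(x,\sigma^2 t)\geq d_*\epsilon_n|\log\epsilon_n|$ the unit-gradient property~\eqref{eq:unitgrad}, the time-Lipschitz estimate~\eqref{eq:t_lipschitz} across the time difference $\sigma^2 t\leq\sigma^2\delta'_*\leq v_0$, and the triangle inequality give
\begin{equation*}
d(\psi_i(t),0)\ \geq\ d(x,\sigma^2 t)-|\psi_i(t)-x|-V_0\sigma^2 t\ \geq\ \tfrac12\tilde d\,\epsilon_n|\log\epsilon_n|
\end{equation*}
for $n$ large. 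Conditions {\Cb} and {\Cc} then force $p(\psi_i(t))\geq \tfrac12+\epsilon_n$ for every such leaf once $n$ is large. Because distinct subtrees of $\Psi^n$ evolve independently after branching, the votes at the leaves of the embedded $\mc T^{reg}_{A(k)|\log\epsilon_n|}$ are independent, and the no-demagnification bound $g(p_1,p_2,p_3)\geq\min_i p_i$ for $p_i\geq\tfrac12$ (Lemma~\ref{no_demagnification}) shows each is $1$ with probability at least $\tfrac12+\epsilon_n$; Lemma~\ref{g_iteration} magnifies this to
\begin{equation*}
\P_x\l[\Vote_p(\Psi^n(t))=1\r]\geq g^{(\lceil A(k)|\log\epsilon_n|\rceil)}(\tfrac12+\epsilon_n)-\mc O(\epsilon_n^k)\geq 1-\mc O(\epsilon_n^k),
\end{equation*}
and transferring back to $\Xi^n$ via Lemma~\ref{lem:P=D} completes the proof after relabelling constants. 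I expect the displacement estimate of the second paragraph to be the one demanding real care, since the jump dynamics lack the exact Gaussian tails available in the branching Brownian motion case and the bounded-increment Bernstein argument must be tuned so that its exponent beats the polynomial growth of $\E[|N(t)|]$.
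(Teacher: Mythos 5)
Your proposal is correct, and its skeleton is the same as the paper's: reduction from $\Xi^n$ to $\Psi^n$ via Lemma~\ref{lem:P=D}, the Cram\'er lower bound giving an embedded $\mc{T}^{reg}_{A(k)|\log\epsilon_n|}$ (this is \eqref{eq:D_contains_tree} of Lemma~\ref{lem:within_tree_slfvs}), a displacement bound for all leaves, and then the voting argument of Proposition~\ref{prop:d=2_generation} followed by a transfer back to $\Xi^n$. The genuine difference is the displacement bound, which you correctly identify as the delicate step. The paper does not use a many-to-one argument there: instead it proves a second tree estimate, \eqref{eq:D_in_big_tree}, stating that $\mc{T}(\Psi^n(\delta'_*))\subseteq\mc{T}^{reg}_{B_*|\log\epsilon_n|}$ with probability $1-\epsilon_n^k$, and then takes a union bound over the $3^{B_*|\log\epsilon_n|}$ root-to-leaf rays, bounding each ray via the Skorokhod coupling of Lemma~\ref{lem:W_xin_close} plus a Gaussian tail, with $d_*$ chosen large enough that the per-ray failure probability beats $3^{-B_*|\log\epsilon_n|}\epsilon_n^k$; the remark after Lemma~\ref{lem:within_tree_slfvs} makes explicit that \eqref{eq:D_in_big_tree} exists only for this purpose. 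Your route --- Markov's inequality with $\E[|N(t)|]\leq\epsilon_n^{-2(\eta a_*+k+1)}$ times a spine tail estimate, exactly transcribing Lemma~\ref{all_particles_bound} --- renders \eqref{eq:D_in_big_tree} unnecessary, a mild simplification; what it costs is a concentration inequality for the spine, which is a jump martingale, so you must either import a Bernstein/Freedman bound for jump processes (standard, but not developed in the paper; your check that the correction term $\mc{O}(n^{-\beta}\epsilon_n|\log\epsilon_n|)$ is dominated by the variance $\Theta(\epsilon_n^2|\log\epsilon_n|)$, i.e.\ that $n^{-\beta}=o(\epsilon_n)$, is the right one and rests on $(\log n)^{1/2}\epsilon_n\to\infty$), or, as you note parenthetically, reuse Lemma~\ref{lem:W_xin_close}, whose $\mc{O}(n^{-\beta})$ error is harmless against $\E[|N(t)|]$ since every fixed power of $\epsilon_n^{-1}$ is $o(n^{\delta})$ for all $\delta>0$. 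If you take that second option, be aware that the spine of $\Psi^n$ is not exactly the pure jump process of Lemma~\ref{lem:W_xin_close}: it also makes the offspring-relocation jumps at branch times (your Bernstein variant does account for these), which are Poisson$(\mc{O}(|\log\epsilon_n|))$ in number by \eqref{eq:sel_rate_slfvs} and each of size $\mc{O}(n^{-\beta})$, hence negligible, but this should be said. Two cosmetic slips: the inequality $g(p_1,p_2,p_3)\geq\min_i p_i$ for $p_i\geq\tfrac12$ comes from \eqref{hpos}, not from Lemma~\ref{no_demagnification}; and since your final bound is $1-\mc{O}(\epsilon_n^k)$ rather than $1-\epsilon_n^k$, you should, as the paper implicitly does, either run the argument at level $k+1$ or enlarge $n_*$ to absorb the constants.
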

Using the coupling from Lemma~\ref{lem:P=D},
it suffices to prove the result for the branching jump process
$\Psi^n(t)$ in place of $\Xi^n(t)$. For this we exploit the following
lemma.
\begin{lemma} \label{lem:within_tree_slfvs} 
Let $k\in\N$ and 
let $A(k)$ be chosen as in Lemma~\ref{g_iteration}.
There exist $a_*(k),B_*(k)\in(0,\infty)$, and $n_*(k)<\infty$ such that for all $n\geq n_*$ and $\delta_*$, $\delta'_*$ as defined in~\eqref{eq:delta_slfv},
\begin{align} 
\P\l[\mathcal T (\Psi ^n(\delta_*)) \supseteq \mathcal T^{\text{reg}}_{A(k)|\log \epsilon_n |}\r]
&\geq 1-\epsilon_n ^k,\label{eq:D_contains_tree}\\
\text{ and }\hspace{1cm}\P\l[\mathcal T (\Psi ^n(\delta'_*)) \subseteq \mathcal T^{\text{reg}}_{B_*(k)|\log \epsilon_n|}\r]
&\geq 1-\epsilon_n ^k.\label{eq:D_in_big_tree}
\end{align}
\end{lemma}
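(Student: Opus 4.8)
Lemma~\ref{lem:within_tree_slfvs} asserts two containments between the random genealogical tree $\mathcal T(\Psi^n(\cdot))$ and a fixed regular ternary tree, each holding with probability $1-\epsilon_n^k$. The first containment \eqref{eq:D_contains_tree} says that at time $\delta_*$ the tree is \emph{at least} as large as $\mathcal T^{\text{reg}}_{A(k)|\log\epsilon_n|}$, while the second \eqref{eq:D_in_big_tree} says that at the slightly later time $\delta'_*$ the tree is \emph{at most} as large as $\mathcal T^{\text{reg}}_{B_*(k)|\log\epsilon_n|}$. I would prove these two statements separately, noting that in $\Psi^n$ the branching times are exactly i.i.d.\ exponential with parameter $\eta\epsilon_n^{-2}$ (by Definition~\ref{def:slfvs_dual_no_coal}), so the branching structure is precisely that of a continuous-time ternary branching process and the arguments reduce to large-deviations estimates on sums of exponential random variables, exactly as in the branching Brownian motion setting.

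For \eqref{eq:D_contains_tree} I would mimic the proof of Lemma~\ref{ternary_tree} essentially verbatim. For each leaf of $\mathcal T^{\text{reg}}_{A(k)|\log\epsilon_n|}$, the event that this leaf is \emph{not} present in $\mathcal T(\Psi^n(\delta_*))$ requires that the sum of $\lceil A(k)|\log\epsilon_n|\rceil$ independent $\mathrm{Exp}(\eta\epsilon_n^{-2})$ lifetimes exceed $\delta_*$; equivalently, that a sum $S_m$ of $m=\lceil A(k)|\log\epsilon_n|\rceil$ i.i.d.\ $\mathrm{Exp}(1)$ variables exceeds $a_*\eta|\log\epsilon_n|$ (absorbing the rate $\eta\epsilon_n^{-2}$ into the constant via $\delta_*=a_*\epsilon_n^2|\log\epsilon_n|$, so that $\eta\epsilon_n^{-2}\delta_*=a_*\eta|\log\epsilon_n|$). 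Applying Cram\'er's theorem \eqref{cramer} to control this tail, then taking a union bound over the $3^{\lceil A(k)|\log\epsilon_n|\rceil}$ leaves, and choosing $a_*(k)$ large enough that the Cram\'er rate beats $\log 3$ with room to spare, gives the bound $1-\epsilon_n^k$ for all $n$ large. The only adaptation from Lemma~\ref{ternary_tree} is that $\epsilon$ is now the sequence $\epsilon_n$ and one must check this works for $n\geq n_*$; since $\epsilon_n\to0$, the same threshold calculation applies.

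For \eqref{eq:D_in_big_tree} the argument runs in the opposite direction and is conceptually even simpler. I would bound the \emph{total} number of branching events along any root-to-leaf ray of $\Psi^n$ up to time $\delta'_*$. Since each lineage is affected by branchings at rate $\eta\epsilon_n^{-2}$, the number of branchings on a fixed ray in $[0,\delta'_*]$ is $\mathrm{Poisson}(\eta\epsilon_n^{-2}\delta'_*)=\mathrm{Poisson}((a_*+\eta^{-1}(k+1))\eta|\log\epsilon_n|)$, whose mean is $\Theta(|\log\epsilon_n|)$. Using a Poisson tail bound of the type \eqref{poisson tail} (as in the proof of Lemma~\ref{lem:no_nearby}), the probability that any single ray has more than $B_*(k)|\log\epsilon_n|$ branchings decays faster than $\epsilon_n^k$ once $B_*$ is chosen sufficiently large relative to the Poisson mean; a union bound over the at-most-$3^{B_*|\log\epsilon_n|}$ rays of a tree of depth $B_*|\log\epsilon_n|$ (handled self-consistently, exactly as in \eqref{tree not containing regular tree}) then yields \eqref{eq:D_in_big_tree}. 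Choosing $B_*(k)$ large enough that the exponential decay from the Poisson bound dominates the entropy factor $\log 3$ from the union bound completes the estimate.

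**Main obstacle.** Neither containment is genuinely difficult; both are standard large-deviation-plus-union-bound arguments, and the structure is already laid out in Lemmas~\ref{ternary_tree} and~\ref{lem:no_nearby}. The one point requiring a little care is the bookkeeping of constants: one must verify that the \emph{same} $a_*(k)$ can simultaneously serve in the definition of $\delta_*$ (so that \eqref{eq:D_contains_tree} holds) and be compatible with the definition of $\delta'_*=(a_*+\eta^{-1}(k+1))\epsilon_n^2|\log\epsilon_n|$ used in \eqref{eq:D_in_big_tree}, and that $B_*(k)$ can then be chosen afterwards depending on this fixed $a_*$. I would therefore fix $a_*(k)$ first via the lower-containment argument, and only then select $B_*(k)$ in the upper-containment argument, so that no circularity arises. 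I expect this ordering of constant choices, rather than any analytic step, to be where the proof demands attention.
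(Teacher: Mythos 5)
Your proposal is correct and follows essentially the same route as the paper's proof: the lower containment \eqref{eq:D_contains_tree} is obtained by repeating the Cram\'er-plus-union-bound argument of Lemma~\ref{ternary_tree} with branching rate $\eta\epsilon_n^{-2}$, and the upper containment \eqref{eq:D_in_big_tree} by the Poisson Chernoff bound \eqref{poisson tail} on the number of branchings along a ray together with a union bound over the rays of $\mathcal T^{\text{reg}}_{B_*|\log\epsilon_n|}$, exactly as in \eqref{tree not containing regular tree}. Your remark about fixing $a_*(k)$ first and then choosing $B_*(k)$ in terms of it matches the paper's choice of $B_*$ via the condition $e(a_*+\eta^{-1}(k+1))\eta B_*^{-1}<\tfrac{1}{3}e^{-k/B_*}$.
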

\begin{remark}
During the proof of Proposition~\ref{prop:d=2_generation}, we 
deduced~\eqref{eq:ternary_tree_2d}, which is the equivalent 
of~\eqref{eq:D_contains_tree}. 
We did not require an equivalent of~\eqref{eq:D_in_big_tree}. We 
shall use~\eqref{eq:D_in_big_tree} here in order to prove the equivalent 
of~\eqref{eq:all_particles_bound_2d}.
\end{remark}
\begin{proof}
Recall from~\eqref{eq:sel_rate_slfvs} that a given ancestral lineage 
in $\Psi^n$ branches into three after an exponential time with 
rate $\eta \epsilon_n^{-2}$. Hence, \eqref{eq:D_contains_tree} follows for $a_*$ sufficiently large by the same proof as Lemma~\ref{ternary_tree}.

The proof of~\eqref{eq:D_in_big_tree} is the same as 
that of~\eqref{tree not containing regular tree}.
Let $L^n$ be a Poisson distributed random variable with mean 
$\delta'_* \eta \epsilon_n^{-2}=(a_*+\eta^{-1}(k+1)) \eta |\log \epsilon_n |$. 
Take $B_*=B_*(k)$ sufficiently large that $B_* \geq (a_*+\eta^{-1}(k+1)) \eta$ 
and  
\begin{equation}\label{eq:a*B*}
e(a_*+\eta^{-1}(k+1))\eta B_*^{-1}<\frac{1}{3} e^{-k/B_*}.
\end{equation}
The Chernoff bound~\eqref{poisson tail} gives
\begin{align}
\P\l[L^n > B_*|\log \epsilon_n | \r]
&\leq \l(e(a_*+\eta^{-1}(k+1))\eta B_*^{-1}\r)^{B_*|\log \epsilon_n|}\notag\\
&\leq\epsilon^k 3^{-B_*|\log \epsilon_n|},\label{eq:Mxbranching}
\end{align}
and, taking a union bound over each root to leaf ray of 
$\mathcal T^{\text{reg}}_{B_* |\log \epsilon_n|}$,
$$
\P\l[\mathcal T (\Psi ^n(\delta'_*)) \nsubseteq \mathcal T^{\text{reg}}_{B_*(k)|\log \epsilon_n|}\r] \leq 3^{B_* |\log \epsilon _n|} \P\l[L^n > B_*|\log \epsilon_n | \r]\leq \epsilon_n ^k, $$
which completes the proof.
\end{proof}

\begin{proof}[Of Proposition~\ref{prop:d=2_generation_slfvs}.]
We prove this result with $\Psi^n$ in place of $\Xi^n$ (from which the 
result follows using Lemma~\ref{lem:P=D}). The approach closely follows
that of Proposition~\ref{prop:d=2_generation}
except that now we have to control the distance between the jump process
followed by a lineage and Brownian motion.

Take $a_*$ from Lemma~\ref{lem:within_tree_slfvs}, and $t\in [\delta_*, \delta'_*]$.
Let $(\xi^n(t))_{t \geq 0}$ be a pure jump process with rate of jumps from $y$ to $y+z$ given by the intensity measure $m^n(dz)$.
By Lemma~\ref{lem:W_xin_close} we can couple $(\xi^n(t))_{t \geq 0}$ with a $\dim$-dimensional Brownian motion $(W(t))_{t\geq 0}$ in such a way that $\xi^n(0)=W(0)$ and 
$$
\P \l[ |\xi^n(t)-W(\sigma^2 t)| \geq n^{-\beta/6} \r]=\mc O(n^{-\beta }).
$$
For $d_*(k)$ a constant, for large enough $n$, since $\epsilon_n^{-2}=o(\log n)$ we have 
$\frac{1}{2}d_*\epsilon_n|\log\epsilon_n|\geq 2n^{-\beta/6}$. Hence, 
for such $n$, 
\begin{align*}
\P\l[|\xi^n(t)-\xi^n(0)| \geq \tfrac{1}{2}d_* \epsilon_n |\log \epsilon_n |\r]
&\leq \P \l[ |\xi^n(t)-W(\sigma^2 t)| \geq n^{-\beta/6} \r]\\
&\hspace{1cm}+
\P\l[|W(\sigma^2\delta'_*(k,n)))-W(0)| \geq \tfrac{1}{4}d_* \epsilon_n |\log \epsilon_n |\r]\\
&\leq \mc{O}(n^{-\beta})+2\dim\exp\l(-\frac{1}{64}
\frac{d_*^2}{\sigma^2 (a_*+\eta^{-1}(k+1))}|\log \epsilon_n |\r)\\
&\leq 3^{-B_*|\log \epsilon_n|} \epsilon_n^k.
\end{align*}
Here the second inequality follows by bounding the modulus of a $\dim$-dimensional Brownian motion by the sum of the moduli of $\dim$ one-dimensional Brownian motions, and the last inequality follows for $d_*$ sufficiently large.
Using~\eqref{eq:D_in_big_tree} and taking a union bound over the 
root to leaf rays of $\mc{T}_{B_*|\log\epsilon_n|}$, 
for $t\in[\delta_*,\delta'_*]$,
\begin{align} 
\P_x\l[\exists \xi^n_{\v i}\subseteq \Psi^n(\delta'_*) \text{ s.t. }|\xi^n_{\v i}(t)-x| 
\geq \tfrac{1}{2}d_* \epsilon_n |\log \epsilon_n |\r]
&\leq \epsilon_n^k+3^{B_*|\log \epsilon_n|}  3^{-B_*|\log \epsilon_n|} \epsilon_n^k\notag\\
&\leq 2\epsilon_n^k. \label{eq:no_moving_far_slfvs}
\end{align}
Combining~\eqref{eq:no_moving_far_slfvs} with 
Lemma~\ref{lem:within_tree_slfvs},
we obtain that, with probability $\geq 1-3 \epsilon_n^k$,
\begin{enumerate}
\item $\Vote_p (\Psi^n(t))$ is given by independent votes at each of the leaves of $\mathcal T(\Psi^n(t))$.
\item $\mathcal T (\Psi^n(t)) \supseteq \mathcal T^{\text{reg}}_{A|\log \epsilon_n|}$ and the positions of the individuals corresponding to the leaves of $\mathcal T (\Psi^n(t) )$ are all within $\tfrac{1}{2}d_* \epsilon_n |\log \epsilon_n |$ of their starting position. 
\end{enumerate} 
Just as in the proof of Proposition~\ref{prop:d=2_generation}
we obtain Proposition~\ref{prop:d=2_generation_slfvs}
with $\Psi^n$ in place of $\Xi^n$.
An application of Lemma~\ref{lem:P=D} completes the proof. 
\end{proof}

\subsubsection{Propagation of the interface}
\label{sec:slfvs_propagation}

We require the following slight modification of Lemma~\ref{lem:keylemma_2}.
\begin{lemma}\label{lem:keylemma_slfvs}
Let $l \in \N$ with $l\geq 4$ and $K_1>0$. There exists $K_2=K_2(K_1,l)>0$ 
and $n_*(l,K_1,K_2)>0$ such that for all $n\geq n_*$, $x\in\R^{\dim}$, 
$s\in [\sigma^2 \epsilon_n^{l+3},\sigma^2(l+1)\eta^{-1}\epsilon_n^2 |\log 
\epsilon_n |]$ and $t\in[s,\sigma^2 T^*]$,
\begin{align}
&E_x\l[g\l(\P^{\epsilon_n}_{d(W_{s},t-s)+K_1e^{K_2 (t-s)}\epsilon_n |\log \epsilon_n |+3n^{-\beta/6}}[\Vote(\v B(t-s))=1]+\epsilon_n ^l\r) \r]\notag\\
&\hspace{8pc}\leq \tfrac{3}{4}\epsilon_n^l+E_{d(x,t)}\l[g\l(\P^{\epsilon_n}_{B_s +K_1 e^{K_2 t}\epsilon_n |\log \epsilon_n |}[\Vote(\v B(t-s))=1]\r)\r]
+\1_{s\leq \epsilon_n ^3} \epsilon_n ^l, \label{eq:keylemma_slfvs}
\end{align}
and
\begin{align}
&E_x\l[g\l(\P^{\epsilon_n}_{d(W_{s},t-s)-K_1e^{K_2 (t-s)}\epsilon_n |\log \epsilon_n |-3n^{-\beta/6}}[\Vote(\v B(t-s))=0]+\epsilon_n ^l\r) \r]\notag\\
&\hspace{8pc}\leq \tfrac{3}{4}\epsilon_n^l+E_{d(x,t)}\l[g\l(\P^{\epsilon_n}_{B_s -K_1 e^{K_2 t}\epsilon_n |\log \epsilon_n |}[\Vote(\v B(t-s))=0]\r)\r]
+\1_{s\leq \epsilon_n ^3} \epsilon_n ^l. \label{eq:keylemma_slfvs_opp}
\end{align}
\end{lemma}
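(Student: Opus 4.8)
The plan is to transcribe the proof of Lemma~\ref{lem:keylemma_2} almost verbatim, replacing $\epsilon$ by $\epsilon_n$ throughout and carrying along the single new feature: the extra displacement $3n^{-\beta/6}$ that now sits in the starting point of the one-dimensional vote. Here $W$ and $B$ remain $\dim$- and one-dimensional Brownian motions run at rate $2$, and the factors of $\sigma^2$ appearing in the ranges of $s$ and $t$ are purely the time-change $s=\sigma^2\tau$ inherited from Corollary~\ref{cor:xi_W}; the displacement $3n^{-\beta/6}$ is exactly the coupling error between an ancestral lineage $\xi^n_i(\tau)$ and $W(\sigma^2\tau)$ supplied by that corollary. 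The governing observation is a scaling comparison: since $(\log n)^{1/2}\epsilon_n\to\infty$, the quantity $\epsilon_n|\log\epsilon_n|$ decays only poly-logarithmically in $n$, whereas $n^{-\beta/6}$ decays polynomially; hence $n^{-\beta/6}=o(\epsilon_n|\log\epsilon_n|)$, and more importantly $n^{-\beta/6}=o\l(s\,\epsilon_n|\log\epsilon_n|\r)$ uniformly over the stated range $s\geq\sigma^2\epsilon_n^{l+3}$, because $s\,\epsilon_n|\log\epsilon_n|\geq\sigma^2\epsilon_n^{l+4}|\log\epsilon_n|$ is still poly-logarithmically large. Thus the new displacement is negligible at every relevant scale, and its only cost is an enlargement of the constants and of $n_*$.

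First I would reproduce the Brownian tail estimate \eqref{eq:prob_ac_est} for the event $A_x$ that $W$ moves at most $\mc O(\epsilon_n|\log\epsilon_n|)$ by time $s\leq\sigma^2(l+1)\eta^{-1}\epsilon_n^2|\log\epsilon_n|$, and then split into the same three regimes for $d(x,t)$ as in Lemma~\ref{lem:keylemma_2}. In the regime $d(x,t)\leq-(2c_1(l)+2(l+1)\dim+K_1e^{K_2(t-s)})\epsilon_n|\log\epsilon_n|$, the chain of inequalities culminating in \eqref{eq:case_1_start} acquires an additional summand $3n^{-\beta/6}$; since this is $o(\epsilon_n|\log\epsilon_n|)$, enlarging $n_*$ if necessary keeps $d(W_s,t-s)+K_1e^{K_2(t-s)}\epsilon_n|\log\epsilon_n|+3n^{-\beta/6}\leq-c_1(l)\epsilon_n|\log\epsilon_n|$ on $A_x$, so Theorem~\ref{thm:BBMone} closes this case exactly as before. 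The regime $d(x,t)\geq(\cdots)\epsilon_n|\log\epsilon_n|$ is untouched, because the right-hand side of \eqref{eq:keylemma_slfvs} coincides with that of \eqref{eq:keylemma_2eq} and never sees the new shift.

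The substantive case is $|d(x,t)|\leq(\cdots)\epsilon_n|\log\epsilon_n|$. I would again apply Proposition~\ref{prop:coupling1} with $\beta=(R+K_1e^{K_2(t-s)})\epsilon_n|\log\epsilon_n|\leq c_0$ (valid for large $n$) to couple $W_s$ with a one-dimensional $B_s$ started at $d(x,t)$, so that $d(W_s,t-s)\leq B_s+C_0\beta s$ on $A_x$; by the monotonicity \eqref{eq:monotonicity} and that of $g$, the left-hand vote is then evaluated at $B_s+C_0\beta s+K_1e^{K_2(t-s)}\epsilon_n|\log\epsilon_n|+3n^{-\beta/6}$. I would define the event $E$ in terms of this very point, $E=\l\{\l|\P^{\epsilon_n}_{B_s+C_0\beta s+K_1e^{K_2(t-s)}\epsilon_n|\log\epsilon_n|+3n^{-\beta/6}}[\Vote(\v B(t-s))=1]-\tfrac12\r|\leq\tfrac{5}{12}\r\}$, so that Corollary~\ref{lem:high_deriv} applies to it directly and no slippage of the condition \eqref{eq:near_interface} can occur. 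Redoing the computation \eqref{eq:K2_conseq} now produces a gap of $c_1(1)s\,\epsilon_n|\log\epsilon_n|-3n^{-\beta/6}$ between this point and $B_s+K_1e^{K_2 t}\epsilon_n|\log\epsilon_n|$; choosing $K_2$ through a strengthened version of \eqref{eq:K_2_cond}, say $K_1(K_2-C_0)-C_0R=2c_1(1)$, the gap remains $\geq c_1(1)s\,\epsilon_n|\log\epsilon_n|$ for all $n\geq n_*$, precisely because $s\geq\sigma^2\epsilon_n^{l+3}$ makes $c_1(1)s\,\epsilon_n|\log\epsilon_n|$ dominate $3n^{-\beta/6}$. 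With this gap in hand, Corollary~\ref{lem:high_deriv} yields the decrement $-\tfrac{1}{48}s$ on $E$ exactly as in \eqref{eq:1_key_lemma}, while on $E^c$ the estimate \eqref{eq:p_deriv_key} (using $|g'|\leq\tfrac23$ away from $\tfrac12$) pulls $\epsilon_n^l$ outside $g$; assembling these as in the passage following \eqref{eq:0_key_lemma} gives \eqref{eq:keylemma_slfvs}. The companion inequality \eqref{eq:keylemma_slfvs_opp} follows by the identical argument applied to $\{\Vote(\v B(u))=0\}$, using $d(W_u,t-u)\geq B_u-C_0\beta u$ together with the shift $-3n^{-\beta/6}$.

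The one genuinely delicate point is making the negligibility of $3n^{-\beta/6}$ rigorous inside the third case near the interface, where $z\mapsto\P^{\epsilon_n}_z[\Vote(\v B(t-s))=1]$ can have slope larger than $1$ and so a small shift of the argument is not automatically harmless. This is exactly why the lemma restricts to $s\geq\sigma^2\epsilon_n^{l+3}$: the slope-driven gain from Corollary~\ref{lem:high_deriv} scales like $s$, and this lower bound guarantees it outpaces the poly-logarithmically small room consumed by the extra displacement. Everything else is a faithful transcription of the proof of Lemma~\ref{lem:keylemma_2} with $\epsilon\mapsto\epsilon_n$.
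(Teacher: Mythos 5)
Your proposal is correct and follows essentially the same route as the paper's own proof: the paper likewise transcribes the proof of Lemma~\ref{lem:keylemma_2} with enlarged constants (in $R$ and $A_x$, to absorb the $\sigma^2\eta^{-1}$ factors), strengthens the choice of $K_2$ to exactly your condition $K_1(K_2-C_0)-C_0R=2c_1(1)$, and absorbs the coupling error via the same key estimate $n^{-\beta/6}=o\l(s\,\epsilon_n|\log\epsilon_n|\r)$, valid on $s\geq\sigma^2\epsilon_n^{l+3}$, to recover the gap $\geq c_1(1)s\,\epsilon_n|\log\epsilon_n|$ needed for Corollary~\ref{lem:high_deriv}. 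Your treatment of the far-from-interface cases (the shift being $o(\epsilon_n|\log\epsilon_n|)$ on the left side, and the right side never seeing the shift) is also exactly what the paper intends.
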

\begin{proof}
The proof is essentially the same as that of Lemma~\ref{lem:keylemma_2}.
Let $R=2 c_1(l)+4\sigma^2\eta^{-1} (l+1)\dim +1$ and fix $K_2$ such that
$
K_1(K_2-C_0)-C_0 R=2c_1(1);
$
let 
$$
A_x=\l\{\sup_{u\in [0,s]} |W_u-x|\leq 2\sigma^2\eta^{-1} (l+1)\,\dim \epsilon |\log \epsilon |\r\}.
$$
The proof for $d(x,t)\geq (2c_1(l)+2(l+1) \dim +K_1 e^{K_2 (t-s)})\epsilon_n |\log \epsilon_n|$ is then the same as in the proof of Lemma~\ref{lem:keylemma_2} (since $n^{-\beta/6} =o(\epsilon_n |\log \epsilon_n|)$).

Since $n^{-\beta/6} =o(s\epsilon_n |\log \epsilon_n|)$, we have for $\beta = (R+K_1 e^{K_2(t-s)})\epsilon |\log \epsilon |$
as in~\eqref{eq:beta_defn}, for $n$ sufficiently large
\begin{equation} \label{eq:K2_conseq_2}
K_1 e^{K_2 t} \epsilon_n |\log \epsilon_n |-(C_0\beta s +K_1 e^{K_2 (t-s)}\epsilon_n |\log \epsilon_n |+3n^{-\beta/6})
\geq c_1(1) s \epsilon_n |\log \epsilon_n |.
\end{equation}
Using~\eqref{eq:K2_conseq_2} in place of~\eqref{eq:K2_conseq}, 
the proof for $ |d(x,t)|\leq (2c_1(l)+2\sigma^2\eta^{-1}(l+1)\dim + 
K_1 e^{K_2(t-s)})\epsilon_n |\log \epsilon_n |$ is the same as in the 
proof of Lemma~\ref{lem:keylemma_2}.
\end{proof}

The equivalent of Proposition~\ref{prop:contra} for $\Psi^n$ is as follows.
\begin{prop} \label{prop:contra_slfv}
Let $l\in\N$ with $l\geq 4$. 
Define $a_*(l)$ and $\delta_* (l,n)$ as in Proposition~\ref{prop:d=2_generation_slfvs}.
There exist $K_1(l),K_2(l)>0$ and $n_*(l, K_1, K_2)>0$ such that for all 
$n\geq n_*$ and $t\in[\delta_*(l,n),T^*]$ we have 
\begin{equation}\label{eq:upper_final_slfv}
\sup\limits_{x\in\R^{\dim}}\Big(\P_x\l[\Vote_p(\v \Psi^n (t))=1\r]-\P^{\epsilon_n}_{d(x,\sigma^2 t)+K_1e^{K_2 \sigma^2 t}\epsilon_n|\log\epsilon_n|}\l[\Vote(\v B(\sigma^2 t))=1\r]\Big)\leq \epsilon_n^l
\end{equation}
and
\begin{equation} \label{eq:lower_final_slfvs}
\sup\limits_{x\in\R^{\dim}}\Big(\P_x\l[\Vote_p(\v \Psi^n (t))=0\r]-\P^{\epsilon_n}_{d(x,\sigma^2 t)-K_1e^{K_2 \sigma^2 t}\epsilon_n|\log\epsilon_n|}\l[\Vote(\v B(\sigma^2 t))=0\r]\Big)\leq \epsilon_n^l.
\end{equation}
\end{prop}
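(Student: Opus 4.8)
The plan is to adapt the contradiction argument behind Proposition~\ref{prop:contra} essentially verbatim, making three systematic substitutions: $\v W$ is replaced by $\Psi^n$, the comparison one-dimensional branching Brownian motion is run on the diffusive clock $\sigma^2 t$ (so that $d(x,t)$, $e^{K_2 t}$ and $\v B(t)$ become $d(x,\sigma^2 t)$, $e^{K_2\sigma^2 t}$ and $\v B(\sigma^2 t)$), and the two structural inputs of that proof---Proposition~\ref{prop:d=2_generation} and Lemma~\ref{lem:keylemma_2}---are replaced by their SLFVS counterparts, the $\Psi^n$ form of Proposition~\ref{prop:d=2_generation_slfvs} and Lemma~\ref{lem:keylemma_slfvs}. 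Concretely I would set $K_1=d_*(l)+c_1(l)$, with $d_*$ from Proposition~\ref{prop:d=2_generation_slfvs} and $c_1$ from Theorem~\ref{thm:BBMone}, take $K_2=K_2(K_1,l)$ from Lemma~\ref{lem:keylemma_slfvs}, and fix $n_*$ large enough that Theorem~\ref{thm:BBMone} (applied on $[0,\sigma^2 T^*]$), Proposition~\ref{prop:d=2_generation_slfvs} and Lemma~\ref{lem:keylemma_slfvs} all hold. It is enough to prove \eqref{eq:upper_final_slfv}; the estimate \eqref{eq:lower_final_slfvs} follows by the symmetric reasoning with $\Vote(\v B(\cdot))=0$ and \eqref{eq:keylemma_slfvs_opp} in place of \eqref{eq:keylemma_slfvs}.

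I would first dispose of the short-time range $t\in[\delta_*,\delta'_*]$ exactly as in the derivation of \eqref{eq:low_t}. If $d(x,\sigma^2 t)\le -d_*(l)\epsilon_n|\log\epsilon_n|$ then Proposition~\ref{prop:d=2_generation_slfvs} bounds $\P_x[\Vote_p(\Psi^n(t))=1]$ by $\epsilon_n^l$, so the displayed difference is at most $\epsilon_n^l$; and if $d(x,\sigma^2 t)\ge -d_*(l)\epsilon_n|\log\epsilon_n|$ then the shifted argument satisfies $d(x,\sigma^2 t)+K_1e^{K_2\sigma^2 t}\epsilon_n|\log\epsilon_n|\ge c_1(l)\epsilon_n|\log\epsilon_n|$, so Theorem~\ref{thm:BBMone} pushes the subtracted term above $1-\epsilon_n^l$ and the difference is again at most $\epsilon_n^l$.

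For the main range $t\in[\delta'_*,T^*]$ I would run the same contradiction as for Proposition~\ref{prop:contra}: assuming \eqref{eq:upper_final_slfv} fails for some $t$, let $T'$ be the infimum of the violating times, choose $T\in[T',\min(T'+\epsilon_n^{l+3},T^*)]$ still violating it, and aim to show the gap is at most $\tfrac{7}{8}\epsilon_n^l$, contradicting the minimality of $T'$. The mechanism is the first-branch decomposition of $\Psi^n(T)$ at its first selective (branching) time, partitioned according to whether that time is below $\epsilon_n^{l+3}$, lies in $[\epsilon_n^{l+3},T-\delta_*]$, or exceeds $T-\delta_*$, exactly as in \eqref{eq:first_branch}--\eqref{eq:rhs3}. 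The two genuinely new ingredients are Corollary~\ref{cor:xi_W}, which supplies the law $\mathrm{Exp}(\eta\epsilon_n^{-2})$ of the first branch time together with the fact that the offspring sit within $3n^{-\beta/6}$ of $W(\sigma^2\tau)$ with probability $1-\mc O(n^{-\beta})$ (this is precisely the source of the $3n^{-\beta/6}$ shift and the $\sigma^2$-clock), and Lemma~\ref{lem:keylemma_slfvs}, which is already engineered to carry that shift and therefore plays the role of Lemma~\ref{lem:keylemma_2} in controlling the post-branch conditional expectation. Since the subtrees of $\Psi^n$ after a branch are independent, the strong Markov property recombines the pieces just as at the end of the proof of Proposition~\ref{prop:contra}.

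The main obstacle I anticipate is the simultaneous bookkeeping of the two time changes---the diffusive factor $\sigma^2$ coming from the single-lineage invariance principle (Lemma~\ref{lem:W_xin_close}, Corollary~\ref{cor:xi_W}) and the factor $\eta$ in the selective branching rate---so that the branch-time density of $\Psi^n$ integrates against the correct range of $s$ in Lemma~\ref{lem:keylemma_slfvs} and the comparison process $\v B(\sigma^2(\cdot))$ stays aligned with $\Psi^n(\cdot)$; this is why the $\eta^{-1}$ and $\sigma^2$ factors are threaded through $\delta'_*$ and the hypotheses of Lemma~\ref{lem:keylemma_slfvs}. The one additional point beyond Proposition~\ref{prop:contra} is to confirm that the extra coupling errors---the $\mc O(n^{-\beta})$ failure probability and the $3n^{-\beta/6}$ spatial slack---are absorbed into the $\epsilon_n^l$ budget. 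This is exactly where the standing hypothesis $(\log n)^{1/2}\epsilon_n\to\infty$ is used, since it gives $n^{-\beta}=o((\log n)^{-k/2})=o(\epsilon_n^k)$ for every $k$, so these polynomial-in-$n$ corrections are negligible against any fixed power of $\epsilon_n$.
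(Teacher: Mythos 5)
Your proposal is correct and follows essentially the same route as the paper: the paper's own proof simply states that one repeats the argument of Proposition~\ref{prop:contra} with Corollary~\ref{cor:xi_W} and Lemma~\ref{lem:keylemma_slfvs} replacing Lemma~\ref{lem:keylemma_2}, and Proposition~\ref{prop:d=2_generation_slfvs} replacing Proposition~\ref{prop:d=2_generation}, which is exactly the substitution scheme you carry out. Your additional bookkeeping (the choice $K_1=d_*(l)+c_1(l)$, the $\sigma^2$ and $\eta^{-1}$ time changes, and absorbing the $\mc{O}(n^{-\beta})$ coupling errors and $3n^{-\beta/6}$ shift into the $\epsilon_n^l$ budget via $(\log n)^{1/2}\epsilon_n\to\infty$) matches the details the paper leaves implicit.
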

\begin{proof}
The proof exactly follows that of Proposition~\ref{prop:contra}, with
Corollary~\ref{cor:xi_W} and then Lemma~\ref{lem:keylemma_slfvs} 
in place of Lemma~\ref{lem:keylemma_2},
and Proposition~\ref{prop:d=2_generation_slfvs} 
in place of Proposition~\ref{prop:d=2_generation}. 
\end{proof}

\begin{proof}[Of Theorem~\ref{thm:slfvs_dual}]
It suffices to prove the result for sufficiently large $k\in\N$, and in particular we will show it for $k\geq 5$. By Lemma~\ref{lem:P=D}, for $n$ sufficiently large and $t\in [0,T^*]$,
\begin{equation*} 
|\P_x\l[\Vote_p(\v \Psi^n (t))=1\r]-\P_x\l[\Vote_p(\v \Xi^n (t))=1\r]|\leq \epsilon_n^{k+1}.
\end{equation*}
The result now follows from Proposition \ref{prop:contra_slfv} with $l=k+1$, in the same way as in the proof of Theorem~\ref{thm:BBMtwo}.
\end{proof}

\bibliographystyle{plainnat}
\bibliography{curvature}

\end{document}